\numberwithin{equation}{section} 
\newtheorem{theorem}{Theorem}[section]
\newtheorem{lemma}[theorem]{Lemma}
\newtheorem{corollary}[theorem]{Corollary}
\newtheorem{assumption}[theorem]{Assumption}
\newtheorem{remark}[theorem]{Remark}
\numberwithin{equation}{section}
\providecommand{\Div}{\divergence}
\providecommand{\supp}{\support}
\providecommand{skp}[2]{{\langle{#1},{#2}\rangle}}
\providecommand{\wal}{\ensuremath{w^\alpha_\lambda}}
\providecommand{\dx}{\,\mathrm{d}x}
\providecommand{\dz}{\,\mathrm{d}z}
\providecommand{\ds}{\,\mathrm{d}s}
\providecommand{\dt}{\,\mathrm{d}t}
\providecommand{\Oal}{\ensuremath{\mathcal{O}^\alpha_\lambda}}
\providecommand{\Ma}{\ensuremath{\mathcal{M}^\alpha}}
\providecommand{\Mst}{\ensuremath{\mathcal{M}^*}}
\providecommand{\Ms}{\ensuremath{\mathcal{M}^{\sharp,1}}}
\providecommand{\Mas}{\ensuremath{\mathcal{M}^{\alpha,\sharp,1}}}
\providecommand{\Na}{\ensuremath{\mathcal{N}^{\alpha}}}
\newcommand{\seb}[1]{{#1}}
\begin{document}


\title{Parabolic Lipschitz truncation and Caloric Approximation}

\author{L.~Diening, S.~Schwarzacher, B.~Stroffolini, A.~Verde  }

\address{L.~Diening, Institut f\"ur Mathematik,
Albrechtstr. 28a,
49076 Osnabr\"uck, (Germany)}
\email{lars.diening@uni-osnabrueck.de}

\address{ S.~Schwarzacher, Mathematical Institute,  Charles University in Prague, Sokolovska 83, Praha8-Karlin, 186 75,  (Czech Republic)
 }
\email{schwarz@karlin.mff.cuni.cz}

\address{ B.~Stroffolini, Dipartimento di Matematica, Universit{\`a} di
  Napoli,
  Federico II, Via Cintia, I-80126 Napoli (Italy)}
\email{bstroffo@unina.it}

\address{  A.~Verde, Dipartimento di Matematica, Universit{\`a} di
  Napoli, Federico II, Via Cintia, I-80126 Napoli (Italy)}
\email{anverde@unina.it}

\begin{abstract}
  We develop an improved version of the parabolic Lipschitz
  truncation, which allows qualitative control of the distributional
  time derivative and the preservation of zero boundary values.  As a
  consequence, we establish a new caloric approximation lemma. We show
  that almost $p$-caloric functions are close to $p$-caloric
  functions. The distance is measured in terms of spatial gradients as
  well as almost uniformly in time.  Both results are extended to the
  setting of Orlicz growth.
\end{abstract}


\maketitle

\bigskip {\bf Keywords:}
Lipschitz truncation, negative Sobolev spaces,  Orlicz spaces, nonlinear  parabolic systems. 

{\bf Subject Classification:} 35A35; 35K55. 

\section{Introduction}
\label{sec:introduction}

\noindent
The purpose of the Lipschitz truncation is to regularize a given
function by a Lipschitz continuous one by changing it only on a small
\emph{bad} set. It is crucial for the applications that the function
is not changed globally, which rules out the possibility of
convolutions.
The Lipschitz truncation technique was introduced by
Acerbi-Fusco~\cite{AceF88} to show lower semi-continuity of certain
variational integrals.

Since then this technique has been successfully applied in many
different areas. Let us provide a few examples.  The Lipschitz
truncation was used in the context of biting lemmas, existence theory
and regularity results of non-linear elliptic PDE for example
in~\cite{AceF84} \cite{Zha90}, \cite{BalZha90}, \cite{DuzM04},
\cite{DieStrVer12} and \cite{DieLenStrVer12}. 

It was also successfully applied in the framework of non-Newtonian
fluids of power law type~\cite{FreMS03,DieMS08} and even in the
context of numerical analysis~\cite{DieKreSul12}.  In
\cite{BulDieSch16,BulSch16} the Lipschitz truncation was used to
develop an existence theory of vector valued very weak solutions of
elliptic PDEs.

All of these application have in common that the desired test
functions are a~priori not admissible, but have to be approximated by
Lipschitz functions. In order to preserve things like pointwise
monotonicity of the system, it is important that the truncation takes
place only on the small bad set. The bad set is usually defined in
terms of the level sets of the maximal operator of the gradients.

During these years the Lipschitz truncation technique has been refined
with respect to several aspects. In the stationary situation the
picture is almost complete. It is now possible to preserve zero
boundary value, obtain stability in all~$L^p$-spaces and to apply the
technique to sequences of functions. Moreover, the Lipschitz
truncation can be interpreted as a \Calderon{}-Zygmund
decomposition in the Sobolev spaces of first order, see~\cite{Aus04}.

In the parabolic context the theory is much less developed. The
parabolic Lipschitz truncation was introduced by Kinnunen-Lewis
\cite{KinLew00}. They used it to prove higher integrability for very
weak solutions of the evolutive $p$-Laplacian systems.  On the other
hand, Diening-Ruzicka-Wolf~\cite{DieRuzWol10} developed a parabolic Lipschitz
truncation to show existence of fluids of power law type; i.e. the
evolutive analogue to \cite{FreMS03}. In
\cite{BreDieFuc12,BreDieSch13} a parabolic Lipschitz truncation was
developed, which preserves the solenoidal structure of the given
function and makes the truncation more suitable for problems from
fluids dynamics.

The difficulty of the parabolic Lipschitz truncation in contrast to
the stationary case is due to the fact, that the time-derivative of
the solution is only defined in terms of negative Sobolev spaces or in
the distributional sense. Therefore, the parabolic Lipschitz
truncations mentioned above lacked the possibility to preserve zero
boundary values and to obtain control on the time derivative of the
truncation. In this paper we will overcome both of these problems.

In what follows we will introduce our parabolic Lipschitz truncation in the setting of $p$-growth assumptions. 
The full
statement that holds for general Orlicz growth assumptions can be found in Theorem~\ref{thm:orlicz} in the next section.

Our standing assumption for the Lipschitz truncation, is that the given function $w$ has a time derivative in the following sense:
\begin{align}\label{weak}
  \begin{aligned}
    \partial_t w &= \Div \,G \qquad \text{in $\mathcal{D}'(J
      \times \Omega$)}
  \end{aligned}
\end{align}
where $J$ is a time interval and $\Omega$ is a bounded domain in
$\setR^m$, $m \ge 2$.  We take as \lq\lq bad set" a superlevel set of
the maximal function of the spatial gradient and of the time
derivative in the following way. Let
\begin{align*} 
  \mathcal{O}_{\lambda}^{\alpha}& := { \{\Ma(\chi_{J\times\Omega}\nabla
    w)>\lambda\} \cup \{\alpha \Ma(\chi_{J\times\Omega}G) >
    \lambda\}},
\end{align*}
where $\lambda>0$ and the {$\alpha$-parabolic maximal function
  $\Ma$} is defined using the (backwards in time) parabolic
cylinders 
$Q^{\alpha}_r :=(-\alpha r^2, 0)\times B_r$
in the following way:
\begin{align}\label{maximal}
  (\Ma g)(x) &:= \sup_{Q \in \mathcal{Q}^{\alpha} \,:\, x \in Q}
  \dashint_Q|g|.
\end{align}
where $ \mathcal{Q}^{\alpha}$ is the family of cylinders $ Q^{\alpha}_r, r>0.$

Here $\alpha$ is a scaling quantity, to allow different integrability
assumptions on $\nabla w $ and $G$.  Having collected the necessary
notation we may state the theorem.
\begin{theorem}
  \label{thm:pLip}
  Let $G\in L^{p'}(J\times \Omega)$ and $w\in L^p(J,W^{1,p}_0(\Omega))$ 
  satisfy \eqref{weak}.  
  Then there exists an approximation $\wal\in
  L^{p}(J,W^{1,p}_0(\Omega))$ with the following properties:
  \begin{enumerate}
  \item $\wal=w$ on $({\Oal})^c$.
  \item $\Ma(\nabla \wal)\leq c\,\lambda$, i.e. $\wal$ is
    Lipschitz continuous with respect to space.
 \item 
   \begin{align*}
      \int_{J\times\Omega}\abs{\nabla(\wal-w)}^p\dz\leq
      c\int_{\Oal}\abs{\nabla w}^p+\lambda^p|\Oal|.
    \end{align*}
  \item\label {itm:time1}
 $
     \alpha  \mathcal{ N}^{\alpha}(\partial_t\wal)\le c \lambda
$
where $ \mathcal{ N}^{\alpha}$ is defined in \eqref{maxalpha}.
  \item $\wal$ is Lipschitz continuous with respect to  the scaled,
    parabolic metric, i.e.
    \begin{align*}
      \abs{\wal(t,x) - \wal(s,y)} \leq c\, \lambda \max
      \biggset{ \frac{\abs{t-s}^\frac 12}{\alpha^{\frac 12}}, \abs{x-y} }
    \end{align*}
    for all $(t,x),(s,y)\in J\times \Omega$.
  \item\label{itm:integrationbyparts1} for all $\eta\in C^{\infty}_0(Q) $ it holds:
    \begin{align*}
      \skp{\partial_t w}{\wal \eta}=\frac12 \int_Q (|\wal |^2
      -2w\cdot \wal) \partial_t \eta dz+\int_{\Oal} (\partial_t \wal)
      (\wal-w)\eta dz.
    \end{align*}
    \end{enumerate}
\end{theorem}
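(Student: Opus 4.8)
The plan is to construct $\wal$ by a parabolic Whitney decomposition of the open set $\Oal$ (open because $\Ma g$ is lower semicontinuous for $g\in L^1_{\mathrm{loc}}$) and then to read off each property from the geometry of the decomposition together with a parabolic Poincar\'e inequality that uses the structure \eqref{weak}. Concretely: cover $\Oal$ by scaled parabolic cylinders $Q_i:=z_i+Q^\alpha_{r_i}$ such that a fixed dilate of $Q_i$ stays inside $\Oal$ while a larger fixed dilate meets $\Oal^c$, with bounded overlap; pick a subordinate partition of unity $\varphi_i$, $\sum_i\varphi_i=1$ on $\Oal$, $0\le\varphi_i\le1$, with the parabolically correct bounds $r_i\abs{\nabla\varphi_i}+\alpha r_i^2\abs{\partial_t\varphi_i}\le c$; and attach to each $Q_i$ a local approximation $w_i$, an affine-in-space polynomial built from the averages of $w$ and $\nabla w$ over $Q_i$, with the modification $w_i:=0$ on the cylinders that reach $\Omega^c$ — this is what enforces zero lateral boundary values, using that $w$ extended by $0$ lies in $W^{1,p}_0$. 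Then set $\wal:=w-\sum_i\varphi_i(w-w_i)$, so $\wal=w$ on $(\Oal)^c$ and $\wal=\sum_i\varphi_i w_i$ on $\Oal$; this gives (1) at once and keeps $\wal\in L^p(J,W^{1,p}_0(\Omega))$.

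The first thing I would establish is the parabolic Poincar\'e estimate: on every cylinder $Q^\alpha_r(z)\subset J\times\Omega$,
\[
  \dashint_{Q^\alpha_r(z)}\abs{w-\langle w\rangle_{Q^\alpha_r(z)}}
  \le c\,r\dashint_{Q^\alpha_r(z)}\abs{\nabla w}
  + c\,\alpha r\dashint_{Q^\alpha_r(z)}\abs{G},
\]
which has the correct scaling because \eqref{weak} makes the time oscillation of $w$ over a lapse $\alpha r^2$ comparable to $\alpha r\abs{G}$ (test the equation with a spatial cutoff). Chaining this across neighbouring Whitney cylinders gives $\abs{w_i-w_j}\le c\,r_i\lambda$ whenever $Q_i\cap Q_j\ne\emptyset$, since on a cylinder touching $\Oal^c$ both right-hand terms are $\le r\lambda$ by the maximal bounds and neighbours have comparable radii. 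With this in hand, (2), (3) and the spatial half of (5) follow by the usual stationary-style manipulations of $\nabla\wal=\sum_i\varphi_i\nabla w_i-\sum_i(\nabla\varphi_i)(w-w_i)$; the temporal half of (5) follows from $\abs{\wal(t,x)-\wal(s,x)}\le\sum_i\abs{\varphi_i(t,x)-\varphi_i(s,x)}\,\abs{w_i-\langle w\rangle_{Q_i}}$ together with $\abs{\partial_t\varphi_i}\le c(\alpha r_i^2)^{-1}$.

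For (4), on $\Oal$ one has $\partial_t\wal=\sum_i(\partial_t\varphi_i)w_i$, and since $\sum_i\varphi_i\equiv1$ near each point we may subtract there a fixed one of the boundedly many values $w_{i_0}$ whose cylinder contains that point, so $\partial_t\wal=\sum_i(\partial_t\varphi_i)(w_i-w_{i_0})$. Inserting $\abs{w_i-w_{i_0}}\le c\,r\lambda$ (the two cylinders meet, so the chained estimate applies) and $\abs{\partial_t\varphi_i}\le c(\alpha r_i^2)^{-1}$, with comparable radii and a bounded overlap number, yields $\alpha\abs{\partial_t\wal}\le c\,\lambda/r$ pointwise on $\Oal$; since $\wal=w$ off $\Oal$, averaging these contributions against the cylinders entering the definition \eqref{maxalpha} of $\Na$ — whose role is precisely to supply the compensating factor $r$ — gives $\alpha\,\Na(\partial_t\wal)\le c\lambda$.

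The delicate point, and the genuinely new one, is (6). Put $u:=w-\wal\in L^p(J,W^{1,p}_0(\Omega))$, supported in $\Oal$; note that $\skp{\partial_t w}{\wal\eta}:=-\int_Q G\cdot\nabla(\wal\eta)\dz$ and $\skp{\partial_t w}{u\eta}:=-\int_Q G\cdot\nabla(u\eta)\dz$ are well defined because $\wal\eta$ and $u\eta$ have compact support in time. Since $w\eta=\wal\eta+u\eta$, it suffices to prove the two identities
\begin{align*}
  \skp{\partial_t w}{w\eta}&=-\tfrac12\int_Q\abs{w}^2\partial_t\eta\dz,\\
  \skp{\partial_t w}{u\eta}&=\int_{\Oal}(\partial_t\wal)\,u\,\eta\dz-\tfrac12\int_Q\abs{u}^2\partial_t\eta\dz,
\end{align*}
and subtract: the algebra $-\tfrac12\abs{w}^2+\tfrac12\abs{u}^2=\tfrac12\abs{\wal}^2-w\cdot\wal$ and $-(\partial_t\wal)u=(\partial_t\wal)(\wal-w)$ then produce exactly the stated formula. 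The first identity is the classical Lions integration-by-parts rule for $w$, legitimate because $\partial_t w=\Div G$ with $G\in L^{p'}$. The second I would obtain by mollifying $u$ (equivalently $w$) in the time variable, writing $\partial_t u_\varepsilon=\Div G_\varepsilon-(\partial_t\wal)_\varepsilon$, using the smooth identity $\int\partial_t u_\varepsilon\,u_\varepsilon\eta=-\tfrac12\int\abs{u_\varepsilon}^2\partial_t\eta$, and letting $\varepsilon\to0$ with $G_\varepsilon\to G$ in $L^{p'}$, $u_\varepsilon\to u$ in $L^p(J,W^{1,p}_0(\Omega))$, and using that $\partial_t\wal$ is an honest function on $\Oal\supset\operatorname{supp}u$ (by (4)) so that $(\partial_t\wal)_\varepsilon\,u_\varepsilon\to(\partial_t\wal)u$ there. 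The main obstacles are thus the $\alpha$-scaled parabolic Poincar\'e and chaining estimate, which underlies every quantitative bound, and this limiting argument in (6) — where one must reconcile the fact that $\partial_t\wal$ is a function only on $\Oal$ and a mere distribution elsewhere; making all of this robust in the full range of growth is precisely what the shifted $N$-function machinery of Theorem~\ref{thm:orlicz} is set up to handle.
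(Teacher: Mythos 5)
Your overall strategy tracks the paper's proof closely: a parabolic Whitney decomposition of $\Oal$, local replacements $w_i$ set to zero on cylinders that reach the boundary (which is what keeps $\wal$ in $L^p(J,W^{1,p}_0(\Omega))$), a parabolic Poincar\'e inequality whose time-oscillation term carries the $\alpha\,r\,\dashint\abs{G}$ factor, chaining across neighbouring cylinders to obtain $\abs{w_i - w_j}\le c\,r_i\lambda$, and a Steklov/mollification argument for (6). The one place where your sketch genuinely omits a required idea is the treatment of cylinders $Q_i$ that protrude beyond $\setR\times\Omega$. There the identity $\partial_t w=\Div G$ is simply \emph{not available} --- the equation holds only on $J\times\Omega$ --- so the Poincar\'e inequality you stated, which you justify by ``testing the equation with a spatial cutoff'', cannot be invoked. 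Yet you still need $\dashint_{Q_i}\abs{w-w_i}/r_i = \dashint_{Q_i}\abs{w}/r_i\le c\,\lambda$ on precisely these cylinders, otherwise the chaining estimate and hence (2), (3), (4) all break down near $\partial\Omega$. The paper supplies the missing ingredient: after extending $w$ by zero outside $\Omega$, the fat complement property guarantees that a fixed fraction of the spatial ball $B_i$ lies in $\Omega^\complement$ where $w\equiv 0$; a \emph{purely spatial} Poincar\'e inequality then gives $\dashint_{Q_i}\abs{w}/r_i\le c\dashint_{Q_i}\abs{\nabla w}$ and, more to the point, $\alpha\,\mathcal{N}_{Q_i}(\partial_t w)\le c\,\mathcal{M}_{Q_i}(\nabla w)$ --- see Lemma~\ref{lem:N_Qout} and its use in Lemma~\ref{lem:est_w-uj}~\ref{itm:est_w-uj_out}. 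The same device is needed once more inside the proof of (4), to dispose of test cylinders $Q$ with $137Q\not\subset\setR\times\Omega$ before the alternative of Lemma~\ref{lem:alternatives} can even be applied.

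Two smaller remarks. First, your affine-in-space local replacements are unnecessary: the paper uses scalar weighted averages $w_j^\alpha=\mean{w}_{\rho_j}$, which already yield the first-order Lipschitz bound after chaining, and avoid the extra bookkeeping of comparing gradients of the local models; the affine correction is harmless but buys nothing here. Second, in (6) you write that $\partial_t\wal$ is an honest function on $\Oal$ ``by (4)''. Property (4) is a negative-norm bound on $\partial_t\wal$ and does not by itself promote it to a locally integrable function. The correct reason is structural: on $\Oal$ one has $\wal = \sum_j\rho_j w_j^\alpha$, a locally finite sum of smooth cutoffs times constants, so $\wal$ is locally $C^\infty$ there and $\partial_t\wal$ is a classical derivative on $\Oal$; this is exactly what licenses the passage to the limit $(\partial_t\wal)_\varepsilon u_\varepsilon\to(\partial_t\wal)u$ in the mollified identity, as in Lemma~\ref{lem:integrationsbyparts}.
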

Observe, that \ref{itm:time1} shows that our approximation does also
approximate the distributional time- derivative. The maximal operator
$\mathcal{N}^{\alpha}$ is defined in terms of the distributional time
derivative. It seems to be a novel tool to quantify the distributional
time derivative in such a way. In a way the
boundedness of~$\mathcal{N}^\alpha(\partial_t w^\alpha_\lambda)$
corresponds to $\partial_t w^\alpha_\lambda \in L^\infty(J,
W^{-1,\infty}(\Omega))$.

As an application of our parabolic Lipschitz truncation, we present a
new \emph{caloric approximation lemma}.  We show that every
{\lq\lq almost $p$-caloric\rq\rq} function has a $p$-caloric
approximation {\lq\lq close enough\rq\rq}. The following theorem is
the $p$-version of the more general result for Orlicz function, see
Theorem~\ref{thm:almost}.
\begin{theorem}\label{p-caloric}
  Let $p\in (1,\infty)$ and $Q$ be a times-space cylinder, $Q=I\times B=(t^-,t^+)\times B$. Let $\sigma\in (0,1)$,
  $q\in[1,\infty)$ and $\theta\in (0,1)$.  Moreover, let $\tilde{Q}$ be such that $Q \subset
  \tilde{Q} \subset 2Q$.  Then, for all{ $\epsilon>0$} there exists a{
    $\delta>0$} s.t. the following holds: if { $u\in
    L^p(I,W^{1,p}_0(B)), u_t=\Div\, G,\, G\in L^{p'}(J\times
    \Omega)$,} is almost { $p$}-caloric in the sense that for all { $
    \xi\in C_0^\infty(Q),$} 
  \begin{align*} 
    \Big|\dashint_Q \!\!\!\!\!-u \partial_t \xi +|\nabla
    u|^{p-2}\nabla u\nabla \xi dz\Big| \leq \delta \bigg( \dashint_{\tilde{Q}}
    |\nabla u|^p +\abs{G}^{p'}dz+ \|\nabla \xi\|^p_\infty \bigg)
  \end{align*}
  then there exists a {$p$}-caloric function {$h$} s.t. {$h=u$} on
  {$\partial_p Q$} and
  \begin{align*}
    \bigg(\dashint_I\Big(\dashint_B \Big({|u-h|^{2}\over
      |t^+-t^-|}\Big)^{\sigma}dx\Big)^{\frac{q}{
        \sigma}}\dt\bigg)^\frac{1}{q}&+\bigg(\dashint_Q |V(\nabla u )-
    V(\nabla h)|^{2\theta} dz \bigg)^{\frac{1}{\theta}}
    \\
    &\leq \epsilon \dashint_{\tilde{Q}} |\nabla u|^p+\abs{G}^{p'} \,
    dz .
  \end{align*}
where $V(z)=|z|^{\frac{p-2}{2}} z$.
\end{theorem}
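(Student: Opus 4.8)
The plan is to argue by contradiction, which is the standard route for approximation lemmas of this type and allows us to avoid tracking explicit constants. Suppose the conclusion fails for some $\epsilon>0$. Then there is a sequence $\delta_j\to 0$ and functions $u_j\in L^p(I,W^{1,p}_0(B))$ with $\partial_t u_j=\Div\,G_j$, each almost $p$-caloric with parameter $\delta_j$, for which no $p$-caloric $h$ with $h=u_j$ on $\partial_p Q$ satisfies the stated estimate. By the scaling invariance of the hypothesis we may normalize $\dashint_{\tilde Q}|\nabla u_j|^p+|G_j|^{p'}\dz=1$; subtracting a fixed affine-in-nothing (here just the zero) function is unnecessary since $u_j\in W^{1,p}_0$. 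Passing to a subsequence, $\nabla u_j\rightharpoonup \nabla u$ in $L^p$, $G_j\rightharpoonup G$ in $L^{p'}$, and the almost-caloricity inequality passes to the limit (the right-hand side stays bounded while $\delta_j\to 0$), so the limit $u$ is genuinely $p$-caloric on $Q$ with $\partial_t u=\Div\,G$. The natural candidate is then $h:=u$, and one must show $u_j\to u$ strongly in the two quantities appearing in the conclusion, for $j$ large, contradicting the choice of $u_j$.

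The core of the argument is upgrading weak to strong convergence, and here is where the new parabolic Lipschitz truncation of Theorem~\ref{thm:pLip} enters decisively. Set $w_j:=u_j-u$, so $w_j\rightharpoonup 0$ in $L^p(I,W^{1,p}_0(B))$ and $\partial_t w_j=\Div\,(G_j-G)$ in $\mathcal D'$. Fix a cut-off and apply the truncation at a level $\lambda$ (eventually chosen large, or sent to infinity after $j$) to obtain $w_{j,\lambda}^\alpha$ with the six listed properties; crucially $w_{j,\lambda}^\alpha$ has zero boundary values and controlled distributional time derivative via $\mathcal N^\alpha$. Using $w_{j,\lambda}^\alpha$ (times a cut-off) as a test function in the equation for $u_j$ minus the equation for $u$ yields, after the integration-by-parts formula \ref{itm:integrationbyparts1} which handles the $\partial_t$ term without requiring $\partial_t w_j\in L^{p'}$, an estimate of the form
\begin{align*}
  \int_{\tilde Q}\bigl(|\nabla u_j|^{p-2}\nabla u_j-|\nabla u|^{p-2}\nabla u\bigr)\cdot\nabla(u_j-u)\,\chi_{(\Oal)^c}\,\psi\dz \to 0
\end{align*}
along the subsequence (as $j\to\infty$ for fixed $\lambda$), because the right-hand side contributions are either controlled by $\delta_j\to0$, or are products of weakly convergent gradients with the strongly convergent (indeed spatially Lipschitz, hence compact by Arzelà--Ascoli in $x$ uniformly in $t$) truncations, or live on the small bad set $\Oal$ whose measure and gradient-mass are controlled by property (3). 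By the standard monotonicity (the $V$-function inequality $|V(a)-V(b)|^2\simeq (|a|^{p-2}a-|b|^{p-2}b)\cdot(a-b)$), this gives $\int_{\text{good set}}|V(\nabla u_j)-V(\nabla u)|^2\to0$, and since the bad set has small measure with $\lambda$-uniform control, letting $\lambda\to\infty$ after $j\to\infty$ (or a diagonal choice $\lambda_j\to\infty$ slowly) yields $V(\nabla u_j)\to V(\nabla u)$ in $L^{2}_{\mathrm{loc}}$, hence in particular the $L^{2\theta}$ quantity in the conclusion tends to zero.

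For the first term in the conclusion, controlling $|u_j-u|^2/|t^+-t^-|$ in the iterated $L^\sigma_x$--$L^q_t$ norm, I would again test the difference equation against $(u_j-u)\,\psi$ via the integration-by-parts formula to get an $L^\infty_t L^2_x$-type bound on $w_j=u_j-u$ on slightly shrunken cylinders: schematically $\sup_t\int_B|w_j|^2\psi\,\dx\lesssim \delta_j + \int|V(\nabla u_j)-V(\nabla u)|^2+(\text{boundary and bad-set terms})\to0$; here the zero-boundary-value property is what makes the boundary term on $\partial_p Q$ vanish and lets us localize honestly. Since we only need the weaker exponents $\sigma<1$ in space and arbitrary $q<\infty$ in time, and $|t^+-t^-|$ is a fixed comparison scale, this $L^\infty_tL^2_x$ bound — or even an $L^1$-in-time interpolation between it and the gradient bound — immediately dominates the required iterated norm by Jensen/Hölder, with constants depending only on $\sigma,q,\theta,Q,\tilde Q$. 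The main obstacle is the first point of the second paragraph: making rigorous the passage $j\to\infty$ then $\lambda\to\infty$ (or the diagonalization) while keeping the time-derivative term under control purely through $\mathcal N^\alpha$ and formula \ref{itm:integrationbyparts1}, since $\partial_t(u_j-u)$ is only distributional; everything else is a fairly routine combination of monotonicity, the truncation estimates (1)--(6), and Sobolev/Arzelà--Ascoli compactness. One technical point to watch is the choice of the scaling parameter $\alpha$ in the truncation so that it is compatible with both the $L^p$ bound on $\nabla u_j$ and the $L^{p'}$ bound on $G_j$; since the normalization fixes both masses, a fixed $\alpha$ (say $\alpha=1$, or $\alpha$ chosen from the fixed geometry of $Q\subset\tilde Q\subset 2Q$) suffices.
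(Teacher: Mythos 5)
Your proposal argues by contradiction, extracting a sequence $u_j$ with $\delta_j\to 0$, normalizing, and passing to weak limits.  The paper's proof is deliberately \emph{direct}: one of the announced improvements is precisely that no contradiction/compactness argument is used.  In the paper, for the given $u$ one takes $h$ to be the $p$-caloric comparison function with $h=u$ on $\partial_p Q$, sets $w=u-h$, derives the energy bound~\eqref{eq:linftyl2} on $w$, chooses a ``good'' level $\lambda\in[\gamma,2^{m_0}\gamma]$ via Lemma~\ref{lem:goodlambda}, and then tests the almost-caloricity inequality against $\xi=\wal\eta$ using the integration-by-parts formula of Theorem~\ref{thm:orlicz}~\ref{itm:integrationbyparts}.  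Every term is estimated by $\phi(\gamma)$ times a small factor, and the parameters $\tilde\delta,m_0,\delta$ are fixed at the end.  This is a genuinely different route from yours, and it is what delivers the quantitative control without any diagonal subsequence.

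Beyond the difference in method, there are two substantive gaps in your sketch.  First, your candidate $h:=u$ (the weak limit of the $u_j$) does \emph{not} satisfy the required boundary condition $h=u_j$ on $\partial_p Q$.  A correct contradiction argument must take $h_j$ to be the $p$-caloric function agreeing with $u_j$ on the parabolic boundary, so that $w_j=u_j-h_j$ is admissible for the truncation; one then has to control the sequence $h_j$ as well, which is not free.  Second, the normalization $\dashint_{\tilde Q}|\nabla u_j|^p+|G_j|^{p'}\,dz=1$ is not available for $p\neq 2$: under the only admissible rescaling $u\mapsto \mu u$, $G\mapsto \mu G$ (forced by $\partial_t u=\Div G$), the two pieces $|\nabla u|^p$ and $|G|^{p'}$ scale differently, and so do the two sides of the almost-caloricity inequality, because $A(\nabla u)$ scales like $\mu^{p-1}$ while $u\partial_t\xi$ scales like $\mu$.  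The paper sidesteps this by keeping the quantity $\phi(\gamma)$ explicit throughout rather than normalizing.  A smaller but real issue: testing ``the difference equation against $(u_j-u)\psi$'' to get the $L^\infty_tL^2_x$ bound is not legitimate, since $(u_j-u)$ is not an admissible test function ($\partial_t(u_j-u)$ is only distributional); the paper obtains the analogue of that bound in~\eqref{eq:linftyl2} for $w=u-h$, where $w$ solves a monotone parabolic problem with zero initial/boundary data, and otherwise works only with the truncation $\wal$, whose time derivative is controlled in $\mathcal N^\alpha$ by Theorem~\ref{thm:orlicz}.
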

\noindent
If $u$ would be $p$-caloric, then we could choose~$\delta=0$ in the
assumption of Theorem~\ref{p-caloric} and $h=u$ as an approximation. The
small parameter~$\delta>0$ indicates, that~$u$ behaves like a small
perturbation of a $p$-caloric function. This smallness however is only
needed in reaction to very regular test functions~$\xi$. Nevertheless,
Theorem~\ref{p-caloric} ensures that~$u$ is close to a $p$-caloric
function~$h$. The closeness is expressed up to a small loss in
the exponent in the natural distance of the $p$-heat equation, which are
$L^\infty(L^2)$ and $L^p(W^{1,p})$.  In particular, we have control
on the distance in the sense of space and time derivatives.

In the stationary case, the method is called \emph{harmonic
  approximation lemma} and its idea goes back to De Giorgi. He used it
in geometric measure theory to prove regularity of harmonic
maps. See~\cite{DuzMin09h} for an overview on the harmonic
approximation lemma. The closeness in the sense of gradients and the
preservation of the boundary values was introduced
in~\cite{DieStrVer12}.

The p-caloric approximation method was developed by B\"ogelein, Duzaar
and Mingione~\cite{BoeDuzMin13}, (see
also~\cite{DuzMin05},\cite{DuzMinStef11}).  We wish to quickly point
the improvements of the approximation lemma here with respect to the one
in~\cite{BoeDuzMin13}. First, our assumptions are weaker: we only
assume \eqref{weak} and we deduce the validity of a Poincar\'e
inequality. Second, our proof is directly and completely avoids any
argument by contradiction. This direct approach via the parabolic
Lipschitz truncations gives us a much finer control on the quantities
and allows us to show closeness both in $L^q(L^{2\sigma})$ and
$L^{p\theta}(W^{1,{p\theta}})$ norms, (the last closeness is via the
natural quantity $V(z)=|z|^{\frac{p-2}{2}} z$).  In addition the
previous estimate measures the closeness of the time derivatives and
spatial gradients in a quantitative way. Third, we can preserve
boundary values, which is very handy for applications.

As mentioned above, the direct proof of harmonic and caloric
approximation lemmas by means of the Lipschitz truncation has many
advantages. Recently, the solenoidal parabolic Lipschitz truncation
of~\cite{BreDieSch13} was used in~\cite{Bre16} to derive an caloric
approximation lemma for the linear, parabolic $\mathcal{A}$-Stokes
problem, which is useful in fluid mechanics. In contrast
to~\cite{Bre16} we can preserve boundary values and treat a non-linear 
equation.

\subsection{Acknowledgments}

These results were announced for the first time at the Mittag-Leffler Institute for the special program \lq\lq Evolutionary problems\rq\rq in 2013.
We would like to thank the institute for the hospitality. S. Schwarzacher wishes to thank program PRVOUK~P47, financed by Charles University in Prague.
B. Stroffolini and A. Verde have been partially supported by the 
Italian M.I.U.R. Project  \lq\lq Calcolo delle Variazioni " (2012).

\section{Parabolic Lipschitz truncation}
\label{sec:truncation}

In this section with derive an improved version of the parabolic
Lipschitz truncation. Earlier versions are due to~\cite{KinLew02} and
\cite{DieRuzWol10}.

We start by assuming that~$w \in L^1(J, W^{1,1}_0(\Omega))$ is a
distributional solution (possible vectorial) to
\begin{align}
  \label{eq:w}
  \begin{alignedat}{2}
    \partial_t w &= \Div G &\qquad &\text{in $\mathcal{D}'(J \times
      \Omega$)}
    \\
    w &= 0 &\qquad &\text{on $\partial_{\text{par}} (J \times \Omega)$}
  \end{alignedat}.
\end{align}
Here $J = (-t_0,0)$ denotes the time interval.  The space
domain~$\Omega \subset \setR^m$ should have the fat complement
property, see Remark~\ref{rem:fatcomplement}. In particular, it
suffices that~$\Omega$ is a bounded open domain with Lipschitz
boundary. In many applications it is enough to consider the case
where~$\Omega$ is a ball or a cube. By $\partial_{\text{par}} (J
\times \Omega)$ we denote the parabolic boundary of~$J\times \Omega =
(\set{-t_0} \times \Omega) \cup (J \times \partial \Omega)$. The
function~$G$ will at least be in~$L^1(J \times \Omega)$. Note that the
zero boundary values on the parabolic boundary are well defined due to
$w \in L^1(J, W^{1,1}_0(\Omega))$ and $\partial_t w \in L^1(J,
(W^{1,\infty}_0(\Omega))^*)$.

\begin{remark}
  \label{rem:fatcomplement}
  It is sufficient for us to consider domains~$\Omega$ that have the
  {\em fat complement property}, i.e. there exists~$A_1 \geq 1$ such
  that for all $x \in \Omega$
  \begin{align}
    \label{eq:ass_bnd}
    \abs{B_{2\,\distance(x, \Omega^\complement)}(x)} &\leq A_1\,
    \abs{B_{2\,\distance(x, \Omega^\complement)}(x) \cap
      \Omega^\complement}.
  \end{align}
  If $\Omega \subset \setR^d$ is an open bounded set with
  Lipschitz boundary then $\Omega$ has the fat complement property.
\end{remark}

Let us recall some 
 definitions and results that are standard in the context of
N-functions.
A real function $\phi \,:\, \setR^{\geq 0} \to
\setR^{\geq 0}$ is said to be an N-function if it satisfies the
following conditions: $\phi(0)=0$ and there exists the derivative
$\phi'$ of $\phi$.  This derivative is right continuous,
non-decreasing and satisfies $\phi'(0) = 0$, $\phi'(t)>0$ for $t>0$,
and $\lim_{t\to \infty} \phi'(t)=\infty$. Moreover, $\phi$ is
convex.

We say that $\phi$ satisfies the $\Delta_2$-condition, if there
exists $c > 0$ such that for all $t \geq 0$ holds $\phi(2t) \leq c\,
\phi(t)$. We denote the smallest possible constant by
$\Delta_2(\phi)$. Since $\phi(t) \leq \phi(2t)$ the $\Delta_2$
condition is equivalent to $\phi(2t) \sim \phi(t)$.

By $L^\phi$ and $W^{1,\phi}$ we denote the classical Orlicz and
Sobolev-Orlicz spaces, i.\,e.\ $f \in L^\phi$ iff $\int
\phi(\abs{f})\,dx < \infty$ and $f \in W^{1,\phi}$ iff $f, \nabla f
\in L^\phi$. By $W^{1,\phi}_0(\Omega)$ we denote the closure of
$C^\infty_0(\Omega)$ in $W^{1,\phi}(\Omega)$.

By $(\phi')^{-1} \,:\, \setR^{\geq 0} \to \setR^{\geq
  0}$ we denote the function
\begin{align*}
  (\phi')^{-1}(t) &:= \sup \set{ s \in \setR^{\geq 0}\,:\,
    \phi'(s) \leq t}.
\end{align*}
If $\phi'$ is strictly increasing then $(\phi')^{-1}$ is the inverse
function of $\phi'$.  Then $\phi^\ast \,:\, \setR^{\geq 0} \to
\setR^{\geq 0}$ with
\begin{align*}
  \phi^\ast(t) &:= \int_0^t (\phi')^{-1}(s)\,ds
\end{align*}
is again an N-function and $(\phi^\ast)'(t) =
(\phi')^{-1}(t)$ for $t>0$. It is the complementary function of
$\phi$.  Note that $\phi^*(t)= \sup_{s \geq 0} (st - \phi(s))$ and
$(\phi^\ast)^\ast = \phi$. For all $\delta>0$ there exists
$c_\delta$ (only depending on $\Delta_2({\phi, \phi^\ast})$
such that for all $t, s \geq 0$ holds
\begin{align}  \label{eq:young}
  t\,s &\leq \delta\, \phi(t) + c_\delta\, \phi^\ast(s),
\end{align}

  This inequality is called {\em
  Young's inequality}. For all $t\geq 0$
\begin{gather}
  \label{ineq:phiast_phi_p_pre}
  \begin{aligned}
    \frac{t}{2} \phi'\Big(\frac{t}{2} \Big) \leq \phi(t)
    \leq t\,\phi'(t),
    \\
    \phi \bigg(\frac{\phi^\ast(t)}{t} \bigg) \leq \phi^\ast(t) \leq \phi
    \bigg( \frac{2\, \phi^\ast(t)}{t} \bigg).
  \end{aligned}
\end{gather}
Therefore, uniformly in $t\geq 0$
\begin{gather}
  \label{ineq:phiast_phi_p}
  \phi(t) \sim \phi'(t)\,t, \qquad
  \phi^\ast\big( \phi'(t) \big) \sim \phi(t),
\end{gather}
where the constants only depend on $\Delta_2(\phi, \phi^\ast)$.

We will assume that  $\phi$ satisfies the
following assumption.
\begin{assumption}
  \label{ass:phi}
  Let $\phi$ be an N-function such that
  $\phi$ is $C^1$ on $[0,\infty)$ and $C^2$ on $(0,\infty)$. Further
  assume that
  \begin{align}
    \label{eq:phi_pp}
    \phi'(t) &\sim t\,\phi''(t)
  \end{align}
  uniformly in $t > 0$. The constants in~\eqref{eq:phi_pp} are called
  the {\em characteristics of~$\phi$}.
\end{assumption}
We remark that under these assumptions $\Delta_2({\phi,\phi^\ast})
< \infty$ will be automatically satisfied, where
$\Delta_2({\phi,\phi^*})$ depends only on the characteristics
of~$\phi$. 

For given $\phi$ we define the associated N-function $\psi$ by
\begin{align}
  \label{eq:def_psi}
  \psi'(t) &:= \sqrt{ \phi'(t)\,t\,}.
\end{align}

We remark that if $\phi$ satisfies
Assumption~\ref{ass:phi}, then also $\phi^*$, $\psi$, and $\psi^*$
satisfy this assumption.

The idea of the parabolic Lipschitz truncation is to cut certain
maximal functions of the gradient and the time derivative. Since the
time derivative is only defined in the weak sense by~$\partial_t w =
\divergence G$, we will cut the maximal operator of~$G$
instead of~$\partial_t w$.

The properties of the Lipschitz truncation are
summarized in the following theorem.

\begin{theorem}
  \label{thm:orlicz}
  Let $w\in L^1(J,W^{1,1}_0(\Omega))$ and $\nabla w\in L^\phi(J\times\Omega)$
  satisfies \eqref{eq:w}.
  For~$\lambda,\alpha>0$ define the {\em bad
    set} $\Oal$ by
  \begin{align} 
    \label{eq:Oal}
    \mathcal{O}_{\lambda}^{\alpha}& := { \{\Ma(\chi_{J\times\Omega}\nabla
      w)>\lambda\} \cup \{\alpha \Ma(\chi_{J\times\Omega}G) >
      \lambda\}},
  \end{align}
  
  Then there exists an approximation $\wal\in
  L^{\phi}(J,W^{1,\phi}_0(\Omega))$ with the following properties:
  \begin{enumerate}
  \item $\wal=w$ on $({\Oal})^c$.
  \item $\Ma(\nabla \wal)\leq c\,\lambda$, i.e. $\wal$ is
    Lipschitz with respect to space.
 \item 
   \begin{align*}
      \int_{J\times\Omega}\phi(\abs{\nabla(\wal-w)})\dz\leq
      c\int_{\Oal}\phi(\abs{\nabla w})+\phi(\lambda)|\Oal|.
    \end{align*}
  \item\label {itm:time}
 $
     \alpha  \mathcal{ N}^{\alpha}(\partial_t\wal)\le c \lambda
$
where $ \mathcal{ N}^{\alpha}$ is defined in \eqref{maxalpha}.
  \item $\wal$ is Lipschitz continuous with respect to  the scaled,
    parabolic metric, i.e.
    \begin{align*}
      \abs{\wal(t,x) - \wal(s,y)} \leq c\, \lambda \max
      \biggset{ \frac{\abs{t-s}^\frac 12}{\alpha^{\frac 12}}, \abs{x-y} }
    \end{align*}
    for all $(t,x),(s,y)\in J\times \Omega$.
  \item\label{itm:integrationbyparts} for $J=(t^-,t^+)$ and arbitrary $\eta\in W^{1,\infty}_0(-\infty,t^+)$ it holds:
    \begin{align*}
      \skp{\partial_t w}{\wal \eta}=\frac12 \int_Q (|\wal |^2
      -2w\cdot \wal) \partial_t \eta dz+\int_{\Oal} (\partial_t \wal)
      (\wal-w)\eta dz
    \end{align*}
    \end{enumerate}
\end{theorem}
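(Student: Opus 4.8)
The proof will be carried out by a parabolic Whitney--Lipschitz truncation, organised so that the only role of the distributional identity $\partial_t w=\Div G$ is to be paired with smooth functions or with the cylinder averages that enter the construction. Concretely, I would first cover the open bad set $\Oal$ by $\alpha$-parabolic Whitney cylinders $Q_i=Q^\alpha_{r_i}(z_i)$ of bounded overlap with $r_i$ comparable to the $\alpha$-parabolic distance of $Q_i$ to $(\Oal)^c$, take a subordinate partition of unity $(\psi_i)$ with $\sum_i\psi_i=\chi_{\Oal}$, $\abs{\nabla\psi_i}\lesssim r_i^{-1}$ and $\abs{\partial_t\psi_i}\lesssim(\alpha r_i^{2})^{-1}$, put $w_i:=\dashint_{Q_i}w\dz$ for cylinders well inside $J\times\Omega$ and $w_i:=0$ for cylinders meeting a neighbourhood of the parabolic boundary, and define
\begin{align*}
  \wal:=w-\sum_i\psi_i\,(w-w_i),
\end{align*}
so that $\wal=w$ on $(\Oal)^c$ (this is~(1)) and $\wal=\sum_i\psi_i w_i$ on $\Oal$.

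\textbf{The core estimate.} The analytic heart is a parabolic Poincar\'e inequality on the Whitney cylinders: for every enlargement $cQ_i$ still meeting $(\Oal)^c$ one has $\dashint_{cQ_i}\abs{w-\langle w\rangle_{cQ_i}}\dz\lesssim r_i\lambda$. I would prove it by adding the slicewise spatial Poincar\'e inequality, which after integrating in time costs $r_i\dashint_{cQ_i}\abs{\nabla w}\le r_i\lambda$ because $cQ_i$ meets a point where $\Ma(\nabla w)\le\lambda$, to a bound for the time oscillation of spatial means of $w$: since $\partial_t w=\Div G$, the mean of $w$ over a spatial ball of radius $r_i$ has time derivative $\lesssim r_i^{-1}\dashint_{cQ_i}\abs{G}$, so over the length $\alpha r_i^{2}$ of the cylinder it changes by $\lesssim\alpha r_i^{2}\cdot r_i^{-1}\cdot\lambda\alpha^{-1}=r_i\lambda$, using $\alpha\Ma(G)\le\lambda$. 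For the boundary cylinders the same estimate with $w_i=0$ will follow from the fat complement property of~$\Omega$, which gives $w$ zero trace on a fixed portion of $cQ_i$, together with $w=0$ on the initial slice and, again, the $G$-bound. In particular $\abs{w_i-w_j}\lesssim r_i\lambda$ for indices with overlapping patches.

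\textbf{Properties (2)--(5).} From here, (2) and (3) are mainly bookkeeping: on $\Oal$, using $\sum_i\nabla\psi_i=0$, one writes $\nabla\wal=\sum_i\nabla\psi_i\,(w_i-w_{i_0})$ and reads off $\abs{\nabla\wal}\lesssim\lambda$ pointwise, hence $\Ma(\nabla\wal)\lesssim\lambda$; and $\nabla(\wal-w)=\nabla\wal-\nabla w$ is supported in $\Oal$ and bounded there by $\lambda+\abs{\nabla w}$, so~(3) follows from $\Delta_2(\phi)$ and the bounded overlap. For~(4): on $\Oal$ one has $\partial_t\wal=\sum_i\partial_t\psi_i\,(w_i-w_{i_0})$, a genuine $L^1$ function built only from the controlled averages, while $\partial_t\wal=\Div G$ on $(\Oal)^c$; the key point is that $\partial_t\wal$ carries, locally on every $\alpha$-cylinder, a divergence structure of size $\lambda\alpha^{-1}$ --- on the good set this is $G$ itself, with $\abs{G}\le\lambda\alpha^{-1}$ by $\alpha\Ma(G)\le\lambda$, and over a Whitney cylinder, testing $\sum_i\partial_t\psi_i(w_i-w_{i_0})$ against a function supported there and using $\abs{\partial_t\psi_i}\lesssim(\alpha r_i^{2})^{-1}$ and $\abs{w_i-w_{i_0}}\lesssim r_i\lambda$ exhibits it as the divergence of a field of that size --- so summing over the covering and unwinding the definition of $\Na$ yields $\alpha\Na(\partial_t\wal)\lesssim\lambda$. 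Finally~(5) combines the gradient bound~(2), which gives the spatial Lipschitz estimate by the classical pointwise argument, with the divergence structure of~(4) for the temporal half: covering two points $(t,x)$, $(s,x)$ with $\abs{t-s}=\alpha r^{2}$ by a single cylinder $Q^\alpha_r$, the spatial bound controls $\abs{\wal(t,x)-\langle\wal(t)\rangle_{B_r(x)}}$ and $\abs{\wal(s,x)-\langle\wal(s)\rangle_{B_r(x)}}$ by $\lesssim r\lambda$, while integrating the divergence-structured $\partial_t\wal$ over $B_r(x)$ gives $\abs{\langle\wal(t)\rangle_{B_r(x)}-\langle\wal(s)\rangle_{B_r(x)}}\lesssim r^{-1}\abs{t-s}\cdot\lambda\alpha^{-1}=r\lambda$.

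\textbf{The integration by parts~(6), and the main difficulty.} For $w$ smooth in time,~(6) is just the product rule: $\langle\partial_t w,\wal\eta\rangle=-\int w\cdot\wal\,\partial_t\eta-\int w\cdot\partial_t\wal\,\eta$, then $-\int w\cdot\partial_t\wal\,\eta=\int(\wal-w)\cdot\partial_t\wal\,\eta+\tfrac12\int\abs{\wal}^{2}\partial_t\eta$, and $(\wal-w)\partial_t\wal$ is supported in $\Oal$; the boundary contributions drop out because $w=0$ on the parabolic boundary and $\eta$ is admissible. For general $w$ the plan is to regularise in time, $w^\varepsilon:=w * \rho_\varepsilon$ with $\partial_t w^\varepsilon=\Div G^\varepsilon$ an honest function, run the whole construction for $w^\varepsilon$, apply the smooth identity, and let $\varepsilon\to0$, using~(1)--(3) for the stability of the construction and the fact that on the bad set the truncation is an explicit finite sum of cylinder averages, so its time derivative converges in $L^1$ there. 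I expect the genuine difficulty to be precisely the parts touching $\partial_t$ directly --- namely~(4),~(6) and the temporal half of~(5): because $\partial_t w$ is merely distributional, the whole argument must be arranged so that it never falls on $w$ over the bad set but only on test functions or on the averages $w_i$, and the operator $\Na$ has to be designed so that its finiteness records exactly the $L^\infty$-in-time, $W^{-1,\infty}$-in-space size of $\partial_t\wal$; getting the Whitney bookkeeping and this operator to match is where the real work sits.
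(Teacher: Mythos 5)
Your plan follows essentially the same route as the paper: an $\alpha$-parabolic Whitney decomposition of the bad set, a subordinate partition of unity with the right scaling of $\partial_t\rho_j$, truncation by replacing $w$ with local constant averages $w_j$ (zero near the parabolic boundary, using the fat complement property), a parabolic Poincar\'e inequality obtained by combining the spatial slicewise Poincar\'e with an estimate on the time-oscillation of spatial means via $\partial_t w = \Div G$, and then bookkeeping with bounded overlap. Parts (1)--(3) and the Poincar\'e estimate match the paper's Subsections~2.1--2.4 closely, with minor technical variations (the paper averages against the smooth weight $\rho_j$ rather than $\chi_{Q_j}$, a cosmetic choice). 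For~(5) you argue directly, whereas the paper shows $\mathcal{M}^{\alpha,\sharp,1}(\wal)\le c\lambda$ and then cites a Campanato-type embedding from~\cite{DaP65}; for~(6) your time-mollification is the Steklov-average argument of Lemma~3.7 in other clothing. All of that is fine.

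The one place where your sketch is genuinely underdeveloped is~(4). Your claim that $\partial_t\wal$ ``carries, locally on every $\alpha$-cylinder, a divergence structure of size $\lambda\alpha^{-1}$'' is not literally correct: on a Whitney cylinder $Q_i$ the time derivative is the genuine $L^1$ function $\sum_j(\partial_t\rho_j)(w_j-w_i)$ of pointwise size $\lesssim\lambda/(\alpha r_i)$, which is not uniformly of size $\lambda/\alpha$ nor an actual divergence. What rescues the estimate is the $r$-scaling built into $\mathcal{N}_Q$. You do handle the two ``pure'' regimes (test cylinder $Q$ entirely in the good set, where $\partial_t\wal=\Div G$; and $Q$ deep inside a single Whitney cylinder, where the pointwise bound $\lesssim\lambda/(\alpha r_i)$ combines with the factor $r$ in $\mathcal{N}_Q$). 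But you do not address the mixed regime where $Q$ is comparable to or larger than the Whitney cylinders it meets, possibly also touching the good set. The paper isolates this in the geometric alternatives of Lemma~3.2 and treats it in Lemma~3.4: there one writes $\skp{\partial_t\wal}{\xi}=\skp{\partial_t w}{\xi}-\sum_j\skp{\rho_j(w-w_j^\alpha)}{\partial_t\xi}$, bounds the first term by $\alpha\dashint_Q\abs{G}$ using that an enlargement of $Q$ meets the good set, and for the second term replaces $\xi$ by $\xi-\mean{\xi}_{Q_j}$ on each $Q_j$ --- exploiting that $\xi\in\mathcal{F}_Q$ varies slowly across the small cylinders $Q_j$ --- to recover the Poincar\'e bound $\dashint_{Q_j}\abs{w-w_j}\lesssim r_j\lambda$ with the right $\abs{Q_j}/\abs{Q}$ weight, and then sums using $r_j\lesssim r_Q$, $\abs{Q_j}\lesssim\abs{Q_j\cap Q}$, and bounded overlap. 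A fully rigorous version of your~(4) must spell out this third regime; as written, ``summing over the covering and unwinding the definition of $\Na$'' is where the proof is.

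Two minor further points: your direct argument for~(5) pairs $\partial_t\wal$ with $\chi_{B_r}$, which is not smooth; one either uses a smooth bump or, as the paper does, passes through the sharp maximal function. And both your~(4) and~(5) need the boundary case, where an enlargement of $Q$ leaves $\setR\times\Omega$: there $\mathcal{N}_Q(\partial_t w)$ must be controlled via $\mathcal{M}_Q(\nabla w)$ and the fat complement property (Lemma~2.13 in the paper), which your outline mentions only for the Poincar\'e step.
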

The proof will be achieved through several lemmas.

\subsection{Parabolic \Poincare{} type inequality}
\label{ssec:poincare}

The goal of this subsection is to derive a very weak form of the
parabolic \Poincare{} inequality on parabolic cylinders, where the
time derivative is just defined in a weak sense, see
Theorem~\ref{thm:poincare_par}. 

We start with some notations.  By $B_r(x)$, resp. $I_r(t)$, we denote
the standard euclidean ball with radius~$r$ and center~$x\in \setR^m$,
resp.~$t \in \setR$.  For $\alpha>0$ define the $\alpha$-parabolic
metric $d_\alpha: \setR \times \setR^m \to [0,\infty)$ by
\begin{align*}
  d_\alpha\big((t,x),(\tau,y)) := \max \bigset{ \alpha^{-\frac 12}
    \abs{t-\tau}^{\frac 12}, \abs{x-y}}.
\end{align*}
The balls with radius~$r$ respect to~$d_\alpha$ are called
$\alpha$-parabolic cylinders with radius~$r$. Any $\alpha$-parabolic
cylinder~$Q$ can be represented in terms of euclidean balls, i.e.
\begin{align*}
  Q = Q_r^{\alpha}(t,x) &:= I_{\alpha r^2}(t) \times B_r(x) = I \times B.
\end{align*}
for some $(t,x) \in \setR^{m+1}$, where $r$ is the radius of~$Q$.


\noindent By $\sigma Q$ (for $\sigma>0$) we denote the parabolic scaled cylinder
with the same center but $\sigma$-times the radius with respect
to~$d_\alpha$. In particular, for $Q=I \times B$ we have $\sigma Q =
(\sigma^2I) \times (\sigma B)$.
We denote by $\abs{E}$  the Lebesque measure of $E$ for a measurable set $E$ and by  $\chi_E$  its characteristic function. 
We define 
\[
\dashint_E\abs{f}\dx=:\frac1{\abs{E}}\int_E \abs{f} \dx.
\]
For a non-negative integrable function $\eta$ we define
\[
 \mean{f}_\eta:=\frac1{\norm{\eta}_{1}}\int f\eta\dx
\]
and for a measurable set $E$ we define
$\mean{f}_E:=\mean{f}_{\chi_E}$. The integration is taken over the
natural domain of~$f$, so if $f$ is defined on~$Q$, then the integral
is over~$Q$.

\noindent We need the following version of the norm conjugate formula for
$L^1_0(I)$.

\begin{lemma}
  \label{lem:ncf}
  Let $f \in L^1(I)$, then
  \begin{align*}
    \int_I \abs{f-\mean{f}_I}\dt &\leq 2\,\sup_{\beta \in
      C^\infty_{0,0}(I), \norm{\beta}_\infty \leq 1} \int_I f \beta
    \dt \leq 2\, \int_I \abs{f-\mean{f}_I}\dt.
  \end{align*}
\end{lemma}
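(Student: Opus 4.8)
The plan is to recognize this as the standard duality $\|g\|_{L^1(I)} = \sup\{\int_I g\beta \,:\, \|\beta\|_\infty \le 1\}$ adapted to the subspace of mean-zero functions, with the restriction that the test functions $\beta$ lie in $C^\infty_{0,0}(I)$ (smooth, compactly supported in $I$, and — judging from the notation — also with vanishing integral). So write $g := f - \mean{f}_I$, which has $\int_I g = 0$. The upper bound $\int_I f\beta \le \int_I |f - \mean{f}_I|$ for admissible $\beta$ is immediate: since $\int_I \beta = 0$ we may replace $f$ by $f - \mean{f}_I$ inside the integral, and then $\int_I g\beta \le \|g\|_1 \|\beta\|_\infty \le \|g\|_1$. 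Taking the sup gives the right-hand inequality.

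For the left-hand inequality I would argue by approximation. The "ideal" test function realizing the $L^1$ norm of $g$ is $\beta_0 = \operatorname{sgn}(g)$, which gives $\int_I g\beta_0 = \|g\|_1$ exactly, but $\beta_0$ is neither smooth nor mean-zero. The factor $2$ on the left is precisely the room needed to absorb these two defects. First handle the mean-zero constraint: given any $\beta$ with $\|\beta\|_\infty \le 1$ and $\int_I\beta$ not necessarily zero, the function $\beta - \mean{\beta}_I$ has sup-norm at most $2$ and integral zero, and $\int_I g(\beta - \mean{\beta}_I) = \int_I g\beta$ because $\int_I g = 0$. So dropping the mean-zero requirement on the test class costs a factor $2$ in the sup-norm, hence only a factor $2$ overall. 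Then handle smoothness by mollification: choose simple functions (or $\operatorname{sgn}(g)$ directly) $\beta$ with $\|\beta\|_\infty \le 1$ and $\int_I g\beta$ arbitrarily close to $\|g\|_1$, then mollify and cut off near $\partial I$ to land in $C^\infty_0(I)$ while keeping $\|\beta\|_\infty \le 1$ and changing $\int_I g\beta$ by an arbitrarily small amount (using $g \in L^1$, dominated convergence, and absolute continuity of the integral of $|g|$). Combining, $\sup_{\beta \in C^\infty_{0,0}(I),\,\|\beta\|_\infty\le 1}\int_I g\beta \ge \tfrac12\|g\|_1$, which is the claimed left inequality after recalling $\int_I f\beta = \int_I g\beta$ for mean-zero $\beta$.

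The only mildly delicate point is the order of operations in the density argument: one must mollify/truncate \emph{and} restore the zero mean simultaneously while controlling the sup-norm. The clean way is to first pick a smooth compactly supported $\beta_1$ with $\|\beta_1\|_\infty \le 1$ and $\int_I g\beta_1 \ge \|g\|_1 - \varepsilon$ (ignoring the mean), then set $\beta_2 := \beta_1 - \mean{\beta_1}_I \chi_{I_\delta}/\text{(normalization)}$-type correction supported inside $I$ — more simply, subtract a fixed smooth bump $\rho \in C^\infty_0(I)$ with $\int_I \rho = 1$ scaled by $\mean{\beta_1}_I |I|$, giving $\beta_2 := \beta_1 - (\int_I \beta_1)\rho \in C^\infty_0(I)$ with $\int_I \beta_2 = 0$, $\|\beta_2\|_\infty \le 1 + |\int_I\beta_1|\,\|\rho\|_\infty$. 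Since we are free to first shrink $\beta_1$ (replace it by $\beta_1 - \mean{\beta_1}_I$ before smoothing, already mean-zero up to the smoothing error), one arranges $\|\beta_2\|_\infty \le 2$ and $\int_I g\beta_2 \ge \|g\|_1 - 2\varepsilon$; rescaling by $1/2$ puts it in the admissible class with value $\ge \tfrac12\|g\|_1 - \varepsilon$, and letting $\varepsilon \to 0$ finishes. I expect this bookkeeping to be the main (though entirely routine) obstacle; everything else is one line of Hölder/duality.
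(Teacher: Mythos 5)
Your proposal is correct and follows essentially the same strategy as the paper's proof: obtain a (near-)optimizing $\beta\in L^\infty$ with $\norm{\beta}_\infty\le 1$ via duality (or the sign function), replace it by the mean-zero shift $\beta-\mean{\beta}_I$ (absorbing a factor $2$ in the sup-norm, and leaving $\int_I (f-\mean{f}_I)\beta$ unchanged), then mollify and cut off to land in $C^\infty_{0,0}(I)$. The bookkeeping point you flag as mildly delicate -- keeping zero mean while smoothing and truncating near $\partial I$ -- the paper handles in one stroke by first restricting to $I_\epsilon=(\epsilon,1-\epsilon)$ and subtracting the mean over $I_\epsilon$, and only \emph{then} convolving with a mollifier $\psi_{\epsilon/2}$ of unit mass: the construction $h_\epsilon:=\big(\chi_{I_\epsilon}(g-\mean{g}_{I_\epsilon})\big)*\psi_{\epsilon/2}$ is automatically supported in $I$, has exactly zero integral, and satisfies $\norm{h_\epsilon}_\infty\le 2\norm{g}_\infty$, so no correcting bump is required and the limit follows by dominated convergence.
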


\begin{proof}
  The second estimate is obvious, so we just need to prove the first
  one.  It suffices to prove the case $I= (0,1)$. Fix $\delta>0$. Then
  due to the isometry $(L^1(I))^* = L^\infty(I)$, we can find $g \in
  L^\infty(I)$ with $\norm{g}_\infty \leq 1$, such that
  \begin{align}
    \label{eq:ncf1}
    \int_I \abs{f-\mean{f}_I}\dt &\leq \delta + \int_I (f-\mean{f}_I)
    g\dt = \delta + \int_I f (g- \mean{g}_I)\dt.
  \end{align}
  For $\epsilon\in (0, \frac 14)$ define $I_\epsilon =
  (\epsilon,1-\epsilon)$. Let $\psi_\epsilon$ denote a standard
  mollifier with $\supp \psi_\epsilon \subset B_\epsilon(0)$. Define
  \begin{align*}
    h_\epsilon &:= \big(\chi_{I_\epsilon} (g- \mean{g}_{I_\epsilon})
    \big) *\psi_{\epsilon/2}.
  \end{align*}
  It is easy to see that $h_\epsilon \in C^\infty_{0,0}(I)$,(subspace of $C^{\infty}_0$ whose elements have mean value zero),
  $h_\epsilon \to g - \mean{g}_I$ almost everywhere for $\epsilon\to 0$, $\norm{h_\epsilon}_{L^\infty(I)} \leq 2\,\norm{g}_\infty$. In particular, it follows by the
  dominated convergence theorem that
  \begin{align*}
    \int_I f (g- \mean{g}_I)\dt &= \lim_{\epsilon \to 0} \int_I f
    h_\epsilon\dt.
  \end{align*}
  This and~\eqref{eq:ncf1} imply
  \begin{align*}
    \int_I \abs{f-\mean{f}_I}\dt &\leq \delta + \sup_{h_\epsilon \in
      C^\infty_{0,0}(I), \norm{h_\epsilon}_\infty \leq 2} \int_I f
    h_\epsilon \dt.
  \end{align*}
  The claim follows, since $\delta>0$ was arbitrary.
\end{proof}

\begin{lemma}
  \label{lem:ncf2}
  Let $f \in L^1(I)$, then
  \begin{align*}
    \int_I \abs{f-\mean{f}_I}\dt &\leq 2\,\sup_{\gamma \in
      C^\infty_{0}(I), \norm{\gamma'}_\infty \leq 1} \biggabs{\int_I f
    \gamma' \dt} \leq 2\, \int_I \abs{f-\mean{f}_I}\dt.
  \end{align*}
\end{lemma}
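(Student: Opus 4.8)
The plan is to reduce Lemma~\ref{lem:ncf2} to Lemma~\ref{lem:ncf}, which has already been established. The key observation is that the test function spaces are related by integration: if $\beta\in C^\infty_{0,0}(I)$, i.e.\ $\beta$ is smooth, compactly supported in $I$, and has mean value zero, then its antiderivative
\begin{align*}
  \gamma(t) := \int_{t^-}^t \beta(\tau)\dtau
\end{align*}
(for $I=(t^-,t^+)$) is smooth, vanishes near $t^-$ because $\supp\beta\subset I$, and vanishes near $t^+$ precisely because $\int_I\beta=0$; hence $\gamma\in C^\infty_0(I)$ with $\gamma'=\beta$. Conversely, every $\gamma\in C^\infty_0(I)$ has $\gamma'\in C^\infty_0(I)$ with $\int_I\gamma'=0$, so $\gamma'\in C^\infty_{0,0}(I)$. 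This gives a bijection (up to the derivative operation) between the two supremum domains, and the normalizations match: $\norm{\beta}_\infty\leq 1$ corresponds to $\norm{\gamma'}_\infty\leq 1$.

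Carrying this out: for the first inequality, given $\beta\in C^\infty_{0,0}(I)$ with $\norm{\beta}_\infty\leq 1$, set $\gamma$ as above so that $\int_I f\beta\dt = \int_I f\gamma'\dt \leq \bigabs{\int_I f\gamma'\dt}$, and take the supremum to see that the supremum in Lemma~\ref{lem:ncf} is bounded by the supremum in Lemma~\ref{lem:ncf2}; combined with Lemma~\ref{lem:ncf} this yields
\begin{align*}
  \int_I\abs{f-\mean{f}_I}\dt \leq 2\,\sup_{\gamma\in C^\infty_0(I),\,\norm{\gamma'}_\infty\leq 1}\biggabs{\int_I f\gamma'\dt}.
\end{align*}
For the second inequality, given any $\gamma\in C^\infty_0(I)$ with $\norm{\gamma'}_\infty\leq 1$, note $\int_I f\gamma'\dt = \int_I(f-\mean{f}_I)\gamma'\dt$ since $\gamma$ has compact support so $\int_I\gamma'\dt=0$, and therefore $\bigabs{\int_I f\gamma'\dt}\leq \int_I\abs{f-\mean{f}_I}\abs{\gamma'}\dt\leq \int_I\abs{f-\mean{f}_I}\dt$; taking the supremum and multiplying by $2$ finishes the proof.

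I do not anticipate a genuine obstacle here; the only point requiring a little care is verifying that $\gamma$ constructed from $\beta$ genuinely has compact support in $I$ — this is where the mean-zero condition on $\beta$ is essential, since without it $\gamma$ would only be constant, not zero, near the right endpoint. One could equivalently bypass Lemma~\ref{lem:ncf} and argue directly using the isometry $(L^1(I))^*=L^\infty(I)$ as in its proof, mollifying and antidifferentiating, but routing through the already-proven Lemma~\ref{lem:ncf} is cleaner.
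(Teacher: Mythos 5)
Your proof is correct and follows exactly the paper's route: antidifferentiate $\beta\in C^\infty_{0,0}(I)$ to obtain $\gamma\in C^\infty_0(I)$ with $\gamma'=\beta$ (the mean-zero condition giving compact support of $\gamma$), differentiate in the other direction, and reduce both inequalities to Lemma~\ref{lem:ncf}. Your write-up is a slightly more detailed version of the paper's two-sentence argument; the careful observation that $\int_I\beta=0$ forces $\gamma$ to vanish at the right endpoint is precisely the point the paper leaves implicit.
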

\begin{proof}
  This follows immediately from Lemma~\ref{lem:ncf}. Indeed, if $\beta
  \in C^\infty_{0,0}(I)$, then its primitive $\gamma(t):=
  \int_{-\infty}^t \beta(s)\ds$ satisfies $\gamma \in
  C^\infty_{0,0}(I)$. On the other hand for every $\gamma \in
  C^\infty_0(I)$, we have $\gamma' \in C^\infty_{0,0}(I)$.
\end{proof}

For an $\alpha$-parabolic cylinder~$Q=Q_r =I_{\alpha r^2} \times B_r$
we define
\begin{align*}
  \mathcal{F}_Q := \set{ \xi \in C^\infty_0(Q)\,:\,
    \norm{\xi}_{\mathcal{F}_Q} := \norm{\xi}_\infty + r \norm{\nabla
      \xi}_\infty + \alpha r^2 \norm{\partial_t \xi}_\infty \leq 1},
\end{align*}
Define
\begin{align*}
  \mathcal{M}_Q(a) &:= \dashint_Q \abs{a} \dz,
  \\
  \Ms_Q(a) &:= \dashint_Q \frac{\abs{a - \mean{a}_Q}}{r_Q} \dz
\end{align*}

\noindent For a distribution~$a \in \mathcal{D}'(Q)$ we define
\begin{align*}
  \mathcal{N}_Q(a) &:= \sup_{\xi \in \mathcal{F}_Q} \big( r\, \abs{Q}^{-1}
  \abs{\skp{a}{\xi}} \big).
\end{align*}
We use the letter $\mathcal{N}$ for ``negative'', since we measure
somehow the local information on~$\partial_t a$ in a negative space.
We can observe that 
\begin{equation}
  (\Ma a)(x) = \sup_{Q \in \mathcal{Q}^{\alpha} \,:\, x \in Q} \mathcal{M}_Q(a),
 \end{equation}

We also define the maximal operator
\begin{align}\label{maxalpha}
  (\Na a)(x) &:= \sup_{Q \in \mathcal{Q}^{\alpha} \,:\, x \in Q}
  \mathcal{N}_Q(a). 
\end{align}

\begin{remark}
  \label{rem:NvsMdiv}
  If $\partial_t a = \divergence G$ on~$Q$, then
  \begin{align*}
    \mathcal{N}_Q(\partial_t a) &= \sup_{\xi \in \mathcal{F}_Q} \big(
    r\, \abs{Q}^{-1} \abs{\skp{\partial_t a}{\xi}} \big)
    \\
    &= \sup_{\xi \in \mathcal{F}_Q} \big( r\, \abs{Q}^{-1}
    \abs{\skp{G}{\nabla \xi}} \big)
    \\
    &\leq \dashint_Q \abs{G}\dz.
  \end{align*}
\end{remark}

We need the following version of parabolic \Poincare{}'s inequality with respect to time.
\begin{lemma}
  \label{lem:poincare_time}
  Let $\eta \in C^\infty_0(B)$ with $\eta \geq 0$, $\int_B \eta(x)
  \dx>0$ and $\norm{\eta}_\infty + r\, \norm{\nabla \eta}_\infty \leq
  c_0\, \abs{B}^{-1} \norm{\eta}_1$.  Then for every
  $\alpha$-parabolic cube $Q = I \times B$ we have
  \begin{align*}
    \dashint_{I} \bigabs{\mean{a(t)}_{\eta}- \mean{a}_{\eta\times I}}
    \dt &\leq c\, r\alpha \mathcal{N}_Q(\partial_t a),
  \end{align*}
  where $c$ depends on~$\eta$ only through~$c_0$. Here we use the
  notation $\mean{a}_{\eta\times I}= \frac{1}{|I|}\int_I
  \mean{a(t)}_{\eta} dt$.
\end{lemma}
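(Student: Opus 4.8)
The plan is to reduce the statement to an application of the one-dimensional norm-conjugate formula, Lemma~\ref{lem:ncf2}, applied to the time slices of the ``mollified-in-space'' version of~$a$. Concretely, set $f(t) := \mean{a(t)}_\eta = \frac{1}{\norm{\eta}_1}\int_B a(t,x)\eta(x)\,\dx$; this is a function of~$t$ alone on the interval~$I = I_{\alpha r^2}$ of length~$\alpha r^2$. The left-hand side of the claimed inequality is exactly $\dashint_I \abs{f - \mean{f}_I}\dt$, so by Lemma~\ref{lem:ncf2} it is bounded, up to the factor~$2$, by $\frac{1}{\abs{I}}\sup \bigabs{\int_I f\gamma'\dt}$ over $\gamma \in C^\infty_0(I)$ with $\norm{\gamma'}_\infty \le 1$. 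The whole game is then to recognize $\int_I f\gamma'\dt$ as a pairing of~$\partial_t a$ against a suitable test function on~$Q$ and to estimate it by $\mathcal{N}_Q(\partial_t a)$.

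The key step is the following identification: for $\gamma \in C^\infty_0(I)$ the function $\xi(t,x) := \frac{1}{\norm{\eta}_1}\gamma(t)\eta(x)$ lies in $C^\infty_0(Q)$, and
\begin{align*}
  \int_I f(t)\gamma'(t)\dt
  &= \frac{1}{\norm{\eta}_1}\int_I\!\!\int_B a(t,x)\eta(x)\gamma'(t)\dx\dt
  = \skp{a}{\partial_t\xi}
  = -\skp{\partial_t a}{\xi}.
\end{align*}
So it remains to bound $\abs{\skp{\partial_t a}{\xi}}$ by $C\, r\alpha\,\abs{Q}^{-1}\,\abs{I}\,\mathcal{N}_Q(\partial_t a)$ when $\norm{\gamma'}_\infty\le 1$. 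By the definition of $\mathcal{N}_Q$ we have $\abs{\skp{\partial_t a}{\xi}} \le r^{-1}\abs{Q}\,\norm{\xi}_{\mathcal{F}_Q}\,\mathcal{N}_Q(\partial_t a)$, so the proof comes down to estimating the $\mathcal{F}_Q$-norm of~$\xi$.

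The norm $\norm{\xi}_{\mathcal{F}_Q} = \norm{\xi}_\infty + r\norm{\nabla\xi}_\infty + \alpha r^2\norm{\partial_t\xi}_\infty$ is controlled by the hypotheses on~$\eta$: using $\norm{\gamma}_\infty \le \abs{I}\norm{\gamma'}_\infty \le \alpha r^2$ (since $\gamma$ vanishes at an endpoint of~$I$) together with $\norm{\eta}_\infty \le c_0\abs{B}^{-1}\norm{\eta}_1$ and $r\norm{\nabla\eta}_\infty \le c_0\abs{B}^{-1}\norm{\eta}_1$, one gets $\norm{\xi}_\infty \lesssim c_0\,\alpha r^2\abs{B}^{-1}$, $r\norm{\nabla\xi}_\infty \lesssim c_0\,\alpha r^2 \abs{B}^{-1}$, and $\alpha r^2\norm{\partial_t\xi}_\infty \lesssim c_0\,\alpha r^2\abs{B}^{-1}$ (the last using $\norm{\gamma'}_\infty\le 1$ and $\norm{\eta}_\infty/\norm{\eta}_1 \le c_0\abs{B}^{-1}$). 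Hence $\norm{\xi}_{\mathcal{F}_Q} \lesssim c_0\, \alpha r^2 \abs{B}^{-1}$. Plugging this into the $\mathcal{N}_Q$ bound and using $\abs{Q} = \abs{I}\,\abs{B}$ gives $\abs{\skp{\partial_t a}{\xi}} \lesssim c_0\, r^{-1}\,\abs{B}\,\abs{I}\,\alpha r^2\abs{B}^{-1}\,\mathcal{N}_Q(\partial_t a) = c_0\, \alpha r\,\abs{I}\,\mathcal{N}_Q(\partial_t a)$. Dividing by $\abs{I}$ and combining with Lemma~\ref{lem:ncf2} yields the claim, with $c$ depending on~$\eta$ only through~$c_0$.

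The main obstacle — really the only subtle point — is making sure the bookkeeping of scaling factors is exactly right: one must track how $\norm{\gamma}_\infty$ is controlled by $\abs{I} = \alpha r^2$ (this is where the parabolic scaling enters and produces the factor $r\alpha$ rather than $r$ or $\alpha$ alone), and one must be careful that the normalization $1/\norm{\eta}_1$ in the definition of $\mean{\cdot}_\eta$ cancels correctly against the factor $\norm{\eta}_\infty \le c_0\abs{B}^{-1}\norm{\eta}_1$ so that $\norm{\eta}_1$ disappears from the final bound, leaving a constant depending only on~$c_0$. A minor technical caveat: Lemma~\ref{lem:ncf2} requires $f \in L^1(I)$, which needs $t\mapsto \mean{a(t)}_\eta$ to be well defined and integrable; this follows from $a \in \mathcal{D}'(Q)$ having $\partial_t a = \divergence G$ with $G$ at least $L^1_{\mathrm{loc}}$, so that $a$ can be taken to have a representative that is, after testing against the fixed spatial weight~$\eta$, an absolutely continuous (hence $L^1$) function of time — this is the standard regularization of the time variable and should be invoked or recalled rather than reproved here.
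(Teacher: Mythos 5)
Your proof is correct and follows essentially the same route as the paper: reduce via Lemma~\ref{lem:ncf2} to a pairing of $\partial_t a$ against $\gamma\eta$, estimate $\norm{\gamma\eta}_{\mathcal{F}_Q}\lesssim c_0\,\alpha r^2\abs{B}^{-1}$, and invoke the definition of $\mathcal{N}_Q$. The only cosmetic difference is that the paper normalizes $\norm{\eta}_1=1$ at the outset while you carry the factor $1/\norm{\eta}_1$ through; the scaling bookkeeping is identical.
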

\begin{proof}
  We can assume without loss of generality that $\int_B \eta(x)\,dx =
  1$.  From Lemma~\ref{lem:ncf2} it follows that
  \begin{align*}
    \dashint_{I} \bigabs{\mean{a(t)}_{\eta}- \mean{a}_{\eta\times I}}
    \dt &\leq 2\, \sup_{\gamma \in C^\infty_{0}(I),
      \norm{\gamma'}_\infty \leq 1} \biggabs{\dashint_I \mean{a(t)}_\eta
      \gamma'(t) \dt}
    \\
    &= 2 \, \abs{B} \sup_{\gamma \in
      C^\infty_{0}(I), \norm{\gamma'}_\infty \leq 1} \biggabs{\dashint_Q
      a\, \partial_t (\eta \gamma) \dz}.
  \end{align*}
  We want to estimate the integral in the last expression by means of
  $\mathcal{N}_Q(\partial_t a)$. Let~$\gamma \in
  C^\infty_0(I)$ with $\norm{\gamma'}_\infty \leq 1$. Then
  $\norm{\gamma}_\infty \leq c \abs{I}$. We estimate
  \begin{alignat*}{3}
    \norm{\eta \gamma}_\infty &\leq \norm{\eta}_\infty
    \norm{\gamma}_\infty &&\leq c_0\, \abs{B}^{-1} \abs{I},
    \\
    r\norm{\nabla(\eta \gamma)}_\infty &\leq r \norm{\nabla
      \eta}_\infty \norm{\gamma}_\infty &&\leq c_0\, \abs{B}^{-1}
    \abs{I},
    \\
    \alpha r^2 \norm{\partial_t(\eta \gamma)}_\infty &\leq
    \norm{\eta}_\infty \alpha r^2 \norm{\partial_t \gamma}_\infty
    &&\leq c_0\, \abs{B}^{-1} \abs{I}.
  \end{alignat*}
  In particular, $\norm{\eta \gamma}_{\mathcal{F}_Q} \leq c_0
  \abs{B}^{-1} \abs{I} = c_0 \abs{B}^{-1} \alpha r^2$.  Therefore,
  using the definition of $\mathcal{N}_Q(\partial_t a)$ we have
  \begin{align*}
    \abs{B} \biggabs{\dashint_Q a\, \partial_t (\eta \gamma) \dz}
    &\leq c\, \abs{B} \, r^{-1} \mathcal{N}_Q(\partial_t a)\,
    \norm{\eta \gamma}_{\mathcal{F}_Q} \leq c\, \alpha r
    \mathcal{N}_Q(\partial_t a).
  \end{align*}
  and the claim follows.
\end{proof}
We are now in a position to state the following Poincar\'e inequality :
\begin{theorem}
  \label{thm:poincare_par}
  Let $Q=I \times B$ be $\alpha$-parabolic cube and let $\rho \in
  L^1(Q)$ be such that $\rho \geq 0$ and $\norm{\rho}_\infty \leq c_0
  \abs{Q}^{-1} \norm{\rho}_1$. Then
  \begin{align*}
    \dashint_Q \biggabs{ \frac{a- \mean{a}_{\rho}}{r} } \dz
    &\leq c\, \dashint_Q \abs{\nabla a} \dz + c\, \alpha\,
    \mathcal{N}_Q(\partial_t a).
  \end{align*}
  Recall that $\mathcal{N}_Q(\partial_t a) \leq \dashint_Q \abs{G}
  \dz$ if $\partial_t u = \divergence G$ with $G\in L^1(Q)$, due to
  Remark~\ref{rem:NvsMdiv}.
\end{theorem}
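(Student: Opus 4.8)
The plan is to split $a-\mean{a}_\rho$ into a purely spatial fluctuation, which the ordinary \Poincare{} inequality on the ball~$B$ controls, and a purely temporal fluctuation of a fixed spatial average, which Lemma~\ref{lem:poincare_time} controls; the general weight~$\rho$ is then discarded at the very end using only the bound $\norm{\rho}_\infty\le c_0\abs{Q}^{-1}\norm{\rho}_1$. Concretely, I would first fix once and for all a smooth spatial weight $\eta\in C^\infty_0(B)$ with $\eta\ge 0$, $\int_B\eta\dx=1$ and $\norm{\eta}_\infty+r\,\norm{\nabla\eta}_\infty\le c\,\abs{B}^{-1}$ (a rescaled standard bump); such $\eta$ is admissible in Lemma~\ref{lem:poincare_time}. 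Since $\nabla a\in L^1(Q)$ the slices $a(t,\cdot)$ lie in $W^{1,1}(B)$ for a.e.\ $t$, so $\mean{a(t)}_\eta$ is well defined, and $t\mapsto\mean{a(t)}_\eta$ is absolutely continuous because $\partial_t a=\divergence G$ with $G\in L^1(Q)$. Then I would decompose, for a.e.\ $(t,x)\in Q$,
\[
  a(t,x)-\mean{a}_{\eta\times I}
  \;=\;\underbrace{\big(a(t,x)-\mean{a(t)}_\eta\big)}_{=:\mathrm{I}(t,x)}
  \;+\;\underbrace{\big(\mean{a(t)}_\eta-\mean{a}_{\eta\times I}\big)}_{=:\mathrm{II}(t)} .
\]

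For the term $\mathrm{II}$, Lemma~\ref{lem:poincare_time} applied to~$\eta$ gives $\dashint_I\abs{\mathrm{II}(t)}\dt\le c\,r\alpha\,\mathcal{N}_Q(\partial_t a)$, and since $\mathrm{II}$ does not depend on~$x$ this yields $\dashint_Q\abs{\mathrm{II}/r}\dz\le c\,\alpha\,\mathcal{N}_Q(\partial_t a)$. For the term $\mathrm{I}$, I would freeze~$t$ and combine the unweighted \Poincare{} inequality on~$B$ with the weight comparison $\abs{\mean{a(t)}_B-\mean{a(t)}_\eta}\le\frac{\norm{\eta}_\infty}{\norm{\eta}_1}\int_B\abs{a(t,\cdot)-\mean{a(t)}_B}\dx\le c\,\dashint_B\abs{a(t,\cdot)-\mean{a(t)}_B}\dx$ to get $\dashint_B\abs{a(t,\cdot)-\mean{a(t)}_\eta}\dx\le c\,r\,\dashint_B\abs{\nabla a(t,\cdot)}\dx$; integrating in~$t$ gives $\dashint_Q\abs{\mathrm{I}/r}\dz\le c\,\dashint_Q\abs{\nabla a}\dz$. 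Adding the two estimates proves the assertion with $\mean{a}_{\eta\times I}$ in place of $\mean{a}_\rho$.

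It then remains to remove the weight: since $\rho\ge 0$ and $\norm{\rho}_\infty\le c_0\abs{Q}^{-1}\norm{\rho}_1$,
\[
  \bigabs{\mean{a}_\rho-\mean{a}_{\eta\times I}}
  =\biggabs{\frac1{\norm{\rho}_1}\int_Q\big(a-\mean{a}_{\eta\times I}\big)\rho\dz}
  \le\frac{\norm{\rho}_\infty}{\norm{\rho}_1}\int_Q\bigabs{a-\mean{a}_{\eta\times I}}\dz
  \le c_0\dashint_Q\bigabs{a-\mean{a}_{\eta\times I}}\dz,
\]
so by the triangle inequality $\dashint_Q\abs{(a-\mean{a}_\rho)/r}\dz\le(1+c_0)\,\dashint_Q\abs{(a-\mean{a}_{\eta\times I})/r}\dz$, which together with the previous paragraph finishes the proof; the closing sentence of the statement is exactly Remark~\ref{rem:NvsMdiv}. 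The only genuinely parabolic input is Lemma~\ref{lem:poincare_time}, so I expect the main point requiring care to be not an estimate but the well-posedness of the decomposition: that the slices $a(t,\cdot)$ and the trajectory $t\mapsto\mean{a(t)}_\eta$ are honest (a.e.\ defined, absolutely continuous) functions rather than mere distributions — which is precisely what the hypothesis $\partial_t a=\divergence G$ with $G\in L^1(Q)$ provides, and which is why the normalization and support conditions on the auxiliary weight~$\eta$ must be chosen to match Lemma~\ref{lem:poincare_time}.
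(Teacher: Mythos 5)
Your proposal is correct and follows essentially the same route as the paper: reduce to the special weight $\chi_I\eta$, split $a-\mean{a}_{\eta\times I}$ into a spatial fluctuation handled by the space \Poincare{} inequality and a temporal fluctuation handled by Lemma~\ref{lem:poincare_time}, then pass to general~$\rho$ via the bound $\norm{\rho}_\infty\le c_0\abs{Q}^{-1}\norm{\rho}_1$. The only cosmetic difference is that you apply the unweighted spatial \Poincare{} and then compare $\mean{a(t)}_B$ with $\mean{a(t)}_\eta$, where the paper invokes the $\eta$-weighted \Poincare{} directly; and note that the well-posedness point you worry about is actually built into Lemma~\ref{lem:poincare_time} (via Lemma~\ref{lem:ncf2}, which works at the distributional level), so no extra regularity of $t\mapsto\mean{a(t)}_\eta$ beyond $a\in L^1(Q)$ is needed.
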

\begin{proof}
  We begin with the special case~$\rho = \chi_I \eta$ with~$\eta$ as in
  Lemma~\ref{lem:poincare_time}.
  \begin{align*}
    \dashint_{Q} \biggabs{\frac{a-\mean{a}_{\eta\times I}}{r}}\dz
    &\leq \dashint_{I} \dashint_{B}
    \biggabs{\frac{a-\mean{a(t)}_{\eta}}{r}}\dx \dt + \dashint_{I}
    \biggabs{\frac{\mean{a(t)}_{\eta}- \mean{a}_{\eta\times I}
      }{r}}\dt
    \\
    &=: I + II.
  \end{align*}
  Now the claim follows by using \Poincare{} in space for the first
  term and Lemma~\ref{lem:poincare_time} for the second term.

  Now consider the case of arbitrary~$\rho$ as in the
  assumptions. Then
  \begin{align*}
    \dashint_{Q} \bigabs{a-\mean{a}_\rho}\dz &\leq \dashint_{Q}
    \bigabs{a-\mean{a}_{\eta\times I}}\dz +
    \bigabs{\mean{a}_\rho-\mean{a}_{\eta\times I}}.
  \end{align*}
  Now Jensen's inequality with respect to the integration of
  $\mean{a}_\rho$ together with the assumptions on~$\rho$ imply
  \begin{align*}
    \bigabs{\mean{a}_\rho-\mean{a}_{\eta\times I}} &\leq
    \frac{\norm{\rho}_{L^\infty(Q)}}{\norm{\rho}_{L^1(Q)}} \int_Q
    \bigabs{a-\mean{a}_{\eta\times I}} \dz \leq c_0 \dashint_Q
    \bigabs{a-\mean{a}_{\eta\times I}} \dz.
  \end{align*}
  In particular, we have
  \begin{align*}
    \dashint_{Q} \bigabs{a-\mean{a}_\rho}\dz &\leq (1+c_0) \dashint_Q
    \bigabs{a-\mean{a}_{\eta\times I}} \dz,
  \end{align*}
  so the general case follows from the special one.
\end{proof}
Since the above (weak) setting can not be applied to the Orlicz
setting in modular form, we include the following classical space-time
\Poincare{} in modular Orlicz form. 
\begin{lemma}
  \label{lem:poincare_phi}
  Let $Q=I \times B$ be $\alpha$-parabolic cube and let $\rho \in
  L^1(Q)$ be such that $\rho \geq 0$ and $\norm{\rho}_\infty \leq c_0
  \abs{Q}^{-1} \norm{\rho}_1$. 
  Moreover, let  $\partial_t
  a=\divergence G$ with $G\in L^1(Q)$ in the sense of distributions.  Let $\phi$ be an Orlicz
  function satisfying the~$\Delta_2$-condition.  Then for every
  $\alpha$-parabolic cube $Q = I \times B$ we have
  \begin{align*}
    \dashint_Q \phi\Big(\Bigabs{ \frac{a- \mean{a}_{\rho}}{r}
    }\Big) \dz &\leq c\, \dashint_Q \phi\big(\abs{\nabla a} \big)\dz +
    c\,\phi\bigg( \alpha \dashint_{Q}\abs{G}\dz\bigg).
  \end{align*}
\end{lemma}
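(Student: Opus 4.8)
The plan is to reduce the modular Orlicz Poincar\'e inequality to the linear (i.e.\ $L^1$) version already proved in Theorem~\ref{thm:poincare_par}, combined with a standard modular Poincar\'e inequality in space. The key structural observation is that Theorem~\ref{thm:poincare_par} produces a bound on $\dashint_Q \abs{a-\mean{a}_\rho}/r$ in terms of $\dashint_Q\abs{\nabla a}$ and $\alpha\,\dashint_Q\abs G$, and one would like to ``insert $\phi$'' around all three quantities. The mechanism that makes this legitimate is the following: fix a threshold, say $\mu := \dashint_Q\phi(\abs{\nabla a}) + \phi(\alpha\dashint_Q\abs G)$ (or rather $\phi^{-1}$ of a suitable average), and split the cylinder. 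On the set where $\abs{\nabla a}$ and $\abs G$ are comparable to their modular averages, one can use convexity/Jensen directly; on the exceptional set one exploits the $\Delta_2$-condition. Concretely, I would first prove the special case $\rho=\chi_I\,\eta$ with $\eta$ as in Lemma~\ref{lem:poincare_time}, since the passage from $\mean{a}_{\eta\times I}$ to a general $\mean{a}_\rho$ is handled exactly as at the end of the proof of Theorem~\ref{thm:poincare_par} (Jensen plus $\norm\rho_\infty\le c_0\abs Q^{-1}\norm\rho_1$, which only costs a factor $1+c_0$ and commutes with the convex $\phi$ after one more application of Jensen).

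For the special case, I would split as in Theorem~\ref{thm:poincare_par}:
\begin{align*}
  \dashint_Q \phi\Big(\Bigabs{\frac{a-\mean{a}_{\eta\times I}}{r}}\Big)\dz
  &\le c\,\dashint_I\dashint_B \phi\Big(\Bigabs{\frac{a-\mean{a(t)}_\eta}{r}}\Big)\dx\dt
  + c\,\dashint_I \phi\Big(\Bigabs{\frac{\mean{a(t)}_\eta-\mean{a}_{\eta\times I}}{r}}\Big)\dt,
\end{align*}
using convexity of $\phi$ and $\Delta_2$ to distribute $\phi$ across the sum of the two terms (the triangle-inequality step costs only $\Delta_2(\phi)$). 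The first term is controlled by $c\,\dashint_Q\phi(\abs{\nabla a})\dz$ by the classical modular Poincar\'e inequality in space (slicewise in $t$, then integrate); this is standard for $\Delta_2$ Orlicz functions. The second, purely time-oriented term is the one requiring Lemma~\ref{lem:poincare_time} combined with a modular upgrade.

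The main obstacle is precisely this modular time estimate: Lemma~\ref{lem:poincare_time} is linear, giving $\dashint_I\abs{\mean{a(t)}_\eta-\mean{a}_{\eta\times I}}\dt \le c\,\alpha r\,\mathcal N_Q(\partial_t a) \le c\,\alpha r\,\dashint_Q\abs G\dz$, whereas we need $\dashint_I \phi(\abs{\mean{a(t)}_\eta-\mean{a}_{\eta\times I}}/r)\dt \le c\,\phi(\alpha\dashint_Q\abs G)$. My plan here is to note that $t\mapsto \mean{a(t)}_\eta$ is (after the weak identification via $\partial_t a=\Div G$) a function whose distributional derivative is $\mean{\Div G(t)}_\eta = -\mean{G(t)}_{\nabla\eta}$, so $\abs{\tfrac{d}{dt}\mean{a(t)}_\eta} \le \norm{\nabla\eta}_\infty \dashint_B\abs{G(t,\cdot)}\dx \le c\,(\alpha r^2)^{-1}\abs B\cdot\dashint_{B}\abs{G(t,\cdot)}$ up to the normalization of $\eta$; hence $\mean{a(t)}_\eta$ is Lipschitz in $t$ with a controlled derivative, and a one-dimensional Poincar\'e inequality gives $\abs{\mean{a(t)}_\eta-\mean{a}_{\eta\times I}} \le \abs I\,\sup_t\abs{\tfrac{d}{dt}\mean{a(t)}_\eta} \le c\,\alpha r\,\dashint_Q\abs G\dz =: c\,\alpha r\,\Lambda$, \emph{uniformly in} $t\in I$. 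Applying $\phi$ (monotone) to this pointwise-in-$t$ bound and then averaging over $I$ trivially yields $\dashint_I\phi(\abs{\mean{a(t)}_\eta-\mean{a}_{\eta\times I}}/r)\dt \le \phi(c\,\alpha\Lambda) \le c\,\phi(\alpha\Lambda)$ by $\Delta_2$, which is exactly the desired term. The subtlety to be careful about is justifying the formula for the derivative of $t\mapsto\mean{a(t)}_\eta$ directly from $\partial_t a = \Div G$ in $\mathcal D'(Q)$ — this is the same computation already implicit in the proof of Lemma~\ref{lem:poincare_time} (test with $\eta\,\gamma$ and integrate by parts in space), so it costs nothing new, one only has to phrase it as an $L^\infty$ bound on the difference quotient rather than a duality bound against $\gamma'$. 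Assembling the two terms and then passing from $\rho=\chi_I\eta$ to general $\rho$ as above completes the proof.
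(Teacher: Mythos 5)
Your overall strategy is the same as the paper's: reduce to $\rho=\chi_I\eta$ by the Jensen argument of Theorem~\ref{thm:poincare_par}, split the modular into a space part (handled by the modular Poincar\'e inequality in space on each time slice) and a time part (a pointwise-in-$t$ bound on $\abs{\mean{a(t)}_\eta-\mean{a}_{\eta\times I}}$ which is then fed directly into $\phi$). That architecture is correct and is exactly what the paper does.

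However, there is a genuine gap in the way you derive the pointwise time bound. You invoke
\[
\abs{\mean{a(t)}_\eta-\mean{a}_{\eta\times I}} \;\leq\; \abs I\,\sup_{\tau}\Bigabs{\tfrac{d}{d\tau}\mean{a(\tau)}_\eta}
\;\leq\; c\,\alpha r\,\dashint_Q\abs G\dz,
\]
but the second inequality requires $\sup_{\tau\in I}\dashint_B\abs{G(\tau,\cdot)}\dx \leq c\,\dashint_I\dashint_B\abs G$, i.e.\ an $L^\infty$--in--time bound on $\dashint_B\abs{G(\tau,\cdot)}\dx$ in terms of its $L^1$--in--time average. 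This is false under the stated hypothesis $G\in L^1(Q)$: the time--supremum of $\dashint_B\abs{G(\tau,\cdot)}\dx$ is not controlled by its time--average. The correct argument, and the one the paper uses in~\eqref{eq:time}, avoids the $\sup_\tau$ entirely: write $\mean{a(t)}_\eta-\mean{a}_{\eta\times I}=\dashint_I\big(\mean{a(t)}_\eta-\mean{a(s)}_\eta\big)\ds$ and, for each $s$, $\mean{a(t)}_\eta-\mean{a(s)}_\eta=\tfrac{1}{\norm{\eta}_{L^1(B)}}\int_s^t\skp{G}{\nabla\eta}\dtau$. Since $(s,t)\subset I$, this is bounded by $\tfrac{\norm{\nabla\eta}_\infty}{\norm{\eta}_{L^1(B)}}\int_Q\abs G\dz\leq c\,\alpha r\,\dashint_Q\abs G\dz$, i.e.\ one bounds by the $L^1(I)$--norm of $\tfrac{d}{d\tau}\mean{a(\tau)}_\eta$ (the total variation over $I$) rather than by $\abs I$ times its $L^\infty(I)$--norm. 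The resulting pointwise bound is still uniform in $t$, exactly as you wanted, and the rest of your plan (apply $\phi$, average, use $\Delta_2$, pass from $\chi_I\eta$ to general $\rho$) then goes through unchanged.
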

\begin{proof}
  As in Theorem~\ref{thm:poincare_par} we begin with~$\rho = \chi_I
  \eta$ with~$\eta$ as in Lemma~\ref{lem:poincare_time}. Analogously
  to the proof of Theorem~\ref{thm:poincare_par} we estimate
  \begin{align*}
    \lefteqn{\dashint_{Q}\phi\Big(
      \Bigabs{\frac{a-\mean{a}_{\eta\times I}}{r}}\Big)\dz} \qquad &
    \\
    &\leq \dashint_{I} \dashint_{B}\phi\Big(
    \Bigabs{\frac{a-\mean{a(t)}_{\eta}}{r}}\Big)\dx \dt + \dashint_{I}
    \phi\Big(\Bigabs{\frac{\mean{a(t)}_{\eta}- \mean{a}_{\eta\times I}
      }{r}}\Big)\dt
    \\
    &=: I + II.
  \end{align*}
  Now $I$ can be estimated by $\dashint_Q \phi(\abs{\nabla a})\dz$ by
  using \Poincare{} in space for Orlicz functions,
  see e.g.~\cite[Theorem~7]{DieE08}. For the second we estimate
  \begin{align}
    \label{eq:time}
    \begin{aligned}
      \abs{\mean{a(t)}_{\eta}- \mean{a}_{\eta\times I}}&=
      \Bigabs{\dashint_I\mean{a(t)}_{\eta}-\mean{a(s)}_{\eta}\ds}
      =\Bigabs{\dashint_I\frac{1}{\norm{\eta}_{L^1(B)}}\int_s^t\skp{\partial_t
          a(\tau)}{\eta} d\tau \ds}
      \\
      &=\Bigabs{\dashint_I\frac{1}{\norm{\eta}_{L^1(B)}}\int_s^t\skp{G}{\nabla
          \eta}\ds}
      \\
      &\leq c\alpha r \dashint_Q \abs{G}\dz.
    \end{aligned}
  \end{align}
  This can be used to estimate $(II)$ and the claim follows for~$\rho=
  \chi_I \eta$.  

  Now as in the proof of Theorem~\ref{thm:poincare_par} we can change
  to general~$\rho$ by showing in the same manner
  \begin{align*}
    \dashint_{Q} \phi(\bigabs{a-\mean{a}_\rho})\dz &\leq (1+c_0) \dashint_Q
    \phi\big(\bigabs{a-\mean{a}_{\eta\times I}}\big) \dz. \qedhere
  \end{align*}
\end{proof}

\subsection{Extension}
\label{ssec:extension}

It is convenient for our purpose to use function which are defined on
the whole space~$\setR \times \setR^m$. Therefore, we will extend our
function~$w$ from~\eqref{eq:w} to a function on~$\setR \times
\setR^m$ such that most of its properties are preserved.

We therefore extend~$G$ and~$w$ from~$J \times \Omega$ to $(-\infty,0]
\times \setR^m$ by zero. Since $w(-t_0)=0$ in the sense of a
$(W^{1,\infty}_0(\Omega))^*)$-trace, it is easy to see that $\partial_t
w = \divergence G$ on $\mathcal{D}'((-\infty,0), \setR^m)$.

Next, we extend~$w$ to $\setR \times \setR^m$ by even reflection
and~$G$ by odd reflection. Then it follows that
\begin{align}
  \label{eq:wwhole}
  \begin{alignedat}{2}
    \partial_t w &= \Div G &\qquad &\text{in $\mathcal{D}'(\setR \times
      \Omega$)}
    \\
    w &= 0 &\qquad &\text{outside of $(-t_0,t_0) \times \Omega$},
    \\
    G &= 0 &\qquad &\text{outside of $(-t_0,t_0) \times \Omega$}.
  \end{alignedat}
\end{align}
We will construct a Lipschitz truncation~$\wal$ of~$w$ on~$\setR
\times \setR^m$, which is zero outside of~$(-t_0,t_0) \times
\Omega$. The restriction of~$\wal$ back to~$J \times \Omega$ will then
provide the Lipschitz truncation for our Theorem~\ref{thm:orlicz}.

\subsection{Whitney covering}
\label{ssec:whitney}

For $\alpha, \lambda>0$ we define the bad set~$\Oal$ as
\begin{align}
  \label{eq:Oal2}
  \Oal := \set{\Ma(\nabla w)>\lambda} \cup \set{\alpha \Ma(G) >
    \lambda}.
\end{align}
Note that this differs slightly from the definition~\eqref{eq:Oal} in
the Theorem~\ref{thm:orlicz}, since we extend~$w$ and~$G$ partly by
reflection. This increase the maximal function $\Ma(\nabla w)$ and
$\Ma(G)$ but at most by a factor of two. Therefore, for the sake of
readability we prefer to work with~\eqref{eq:Oal2}. The result certainly
also holds for~\eqref{eq:Oal}.

According to \cite[Lemma~3.1]{DieRuzWol10} there exists an
$\alpha$-parabolic Whitney covering $\set{Q_j^{\alpha}}= \set{I_j \times B_j}$ of~$\Oal$ in the
following sense:
\begin{enumerate}[label={\rm (W\arabic{*})}, leftmargin=*]
\item\label{itm:whit1} $\bigcup_j\frac {1} {2} Q_j^{\alpha} \,=\, \Oal$,
\item\label{itm:whit2} for all $j\in \setN$ we have $8
  Q_j^{\alpha} \subset \Oal$ and $16 Q_j^{\alpha} \cap (\setR^{m+1}\setminus
  \Oal)\neq \emptyset$, 
\item \label{itm:whit3} if $ Q_j^{\alpha} \cap Q_k^{\alpha} \neq \emptyset
  $ then $ \frac 12 r_k\le r_j\leq 2\, r_k$,
\item \label{itm:whit3b}$\frac 14 Q_j^{\alpha} \cap \frac 14Q_k^{\alpha} =
  \emptyset$  for all $j \neq k$,
\item \label{itm:whit4}  each $x\in  \Oal $ belongs to at most 
  $120^{m+2}$ of the sets $4Q_j^{\alpha}$,
\end{enumerate}
where $r_j := r_{B_i}$, the radius of $B_j$ and $Q_j^{\alpha} = I_j \times B_j$.

\noindent With respect to the covering $\set{Q_j^{\alpha}}$ there exists a partition of
unity $\set{\rho_j} \subset C^\infty_0(\setR^{m+1})$ such that
\begin{enumerate}[label={\rm (P\arabic{*})}, leftmargin=*]
\item \label{itm:P1} $\chi_{\frac{1}{2}Q_j^{\alpha}}\leq \rho_j\leq
  \chi_{\frac 34 Q_j^{\alpha}}$ 
\item \label{itm:P3} $\norm{\rho_j}_\infty + r_j \norm{\nabla
    \rho_j}_\infty + r_j^2 \norm{\nabla^2 \rho_j}_\infty + \alpha\,
  r_j^2 \norm{\partial_t \rho_j}_\infty \leq c$.
\end{enumerate}
For each $k \in \setN$ we define $A_k:= \set{ j \,:\, \frac 34
  Q_k^{\alpha} \cap \frac 34 Q_j^{\alpha} \neq \emptyset}$. Then
\begin{enumerate}[label={\rm (P\arabic{*})}, leftmargin=*,start=3]
\item \label{itm:P4} $\sum_{j \in A_k} \rho_j = 1$ on $\frac 34 Q_k^{\alpha}$.
\end{enumerate}
We get the following additional property
\begin{enumerate}[label={\rm (W\arabic{*})}, leftmargin=*, start=6]
\item \label{itm:whit_fat} If $j \in A_k$, then $\abs{Q_j^{\alpha}
    \cap Q_k^{\alpha}} \geq 16^{-m-2} \max \set{\abs{Q_j^{\alpha}},
    \abs{Q_k^{\alpha}}}$.
\item \label{itm:whit_fat34} If $j \in A_k$, then $\abs{\frac 34 Q_j^{\alpha}
    \cap \frac 34 Q_k^{\alpha}} \geq  \max \set{\abs{Q_j^{\alpha}},
    \abs{Q_k^{\alpha}}}$.
\item \label{itm:whit_radii} If $j \in A_k$, then $\frac 12 r_k \leq
  r_j < 2 r_k$.
\item \label{itm:whit_sum} $\# A_k \leq 120^{m+2}$.
\end{enumerate}
Now, we define $w_j$ by
\begin{align*}
  w_j^{\alpha}&:=
  \begin{cases}
    \mean{w}_{\rho_j} &\qquad \text{if $\frac 34 Q_j^{\alpha}
      \subset J \times \Omega$},
    \\
    0 &\qquad \text{else.}
  \end{cases}
\end{align*}

We define our truncation $\wal$ via the formula
\begin{align}
  \label{eq:def_ulambda}
  \wal &:= w - \sum_j \rho_j (w - w_j^{\alpha}).
\end{align}
Since the $\rho_j$ are locally finite, the sum is pointwise well
defined. We will see later that the sum converges also as a
distribution and in a few function spaces.

Note that the sum $\sum_j \rho_j (w - w_j^{\alpha})$ is zero outside
of~$(-t_0,t_0) \times \Omega$. So also~$\wal$ is zero outside of
$(-t_0,t_0) \times \Omega$. In fact, we have
\begin{align}
  \label{eq:supp_bad_parts}
  \support(\rho_j (w - w_j^{\alpha})) \subset \tfrac 34 Q_j^{\alpha}
  \cap ((-t_0,t_0) \times \Omega).
\end{align}
Indeed, $\support \rho_j \subset \frac 34 Q_j^{\alpha}$, so the case
$\frac 34 Q_j^{\alpha} \subset J \times \Omega$ is obvious. If $\frac
34 Q_j^{\alpha} \not\subset J \times \Omega$, then $w_j^{\alpha}=0$
and the claim follows by $\support \rho_j \subset \frac 34
Q_j^{\alpha}$ and $\support w \subset J \times \Omega$.

\subsection{Estimates on the Whitney cylinders}
\label{ssec:est_whitney}

We need a few auxiliary results that allow to estimate~$w-w_j^\alpha$
on our Whitney cylinders. The estimates are based on our parabolic
\Poincare{}'s inequality of subsection~\ref{ssec:poincare}.

Since the equation~$\partial_t w = \divergence G$ only holds on $\setR
\times \Omega$, we need the following auxiliary result to deal with
the case of cylinders that our also outside of this domain. We use the
fact that~$w$ is zero outside of~$\setR \times \Omega$.
\begin{lemma}
  \label{lem:N_Qout}
  Let $Q$ be an $\alpha$-parabolic cylinder with radius~$r$. If $\frac
  45 Q \not\subset \setR \times \Omega$, then
  \begin{align*}
    \alpha\mathcal{N}_{Q}(\partial_t w) &\leq c\,\mathcal{
      M}_{Q}(\nabla w).
  \end{align*}
\end{lemma}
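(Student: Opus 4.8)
The claim is that when a Whitney cylinder $Q$ is \emph{not} entirely inside the domain $\setR\times\Omega$ — more precisely when $\tfrac45Q\not\subset\setR\times\Omega$ — we can bound the ``negative'' quantity $\alpha\,\mathcal{N}_Q(\partial_t w)$ by the spatial maximal-type quantity $\mathcal{M}_Q(\nabla w)=\dashint_Q|\nabla w|\dz$. The point is that the equation $\partial_t w=\Div G$ is only available on $\setR\times\Omega$, so the estimate $\mathcal{N}_Q(\partial_t w)\le\dashint_Q|G|$ from Remark~\ref{rem:NvsMdiv} cannot be used directly. Instead, we exploit that $w$ vanishes identically outside $\setR\times\Omega$, so a portion of $Q$ where $w\equiv0$ is available, and we can use a Poincaré/Sobolev-type argument to control $\skp{\partial_t w}{\xi}$ by the gradient alone.

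First I would unwind the definition: $\alpha\mathcal{N}_Q(\partial_t w)=\alpha\sup_{\xi\in\mathcal{F}_Q}\big(r|Q|^{-1}|\skp{\partial_t w}{\xi}|\big)$, and for each test function $\xi\in\mathcal{F}_Q$ (so $\|\xi\|_\infty+r\|\nabla\xi\|_\infty+\alpha r^2\|\partial_t\xi\|_\infty\le1$) I would write $\skp{\partial_t w}{\xi}=-\skp{w}{\partial_t\xi}=-\int_Q w\,\partial_t\xi\dz$, using that $\xi\in C^\infty_0(Q)$. Now the key geometric observation: since $\tfrac45Q\not\subset\setR\times\Omega$, there is a point of $\tfrac45Q$ in the complement; combined with $8Q\subset\Oal$ (or rather, just the shape of $Q$), a nontrivial fraction of the spatial ball $B=B_r$ — say a fixed proportion at each time slice, thanks to the fat complement property of $\Omega$ via Remark~\ref{rem:fatcomplement} applied at an appropriate scale, or more simply because the spatial ball of radius $r$ around the relevant center must stick out of $\Omega$ by a definite amount — lies in $\Omega^\complement$, where $w(t,\cdot)\equiv0$. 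Hence for a.e.\ $t$, $w(t,\cdot)$ vanishes on a set of measure $\gtrsim|B|$ inside $B$, so the Poincaré inequality (zero-boundary-value version on a ball, or Poincaré with vanishing on a fat set) gives $\dashint_B|w(t,\cdot)|\dx\le c\,r\dashint_B|\nabla w(t,\cdot)|\dx$.

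With this pointwise-in-time bound in hand I would estimate
\begin{align*}
  |\skp{\partial_t w}{\xi}| = \Bigl|\int_Q w\,\partial_t\xi\dz\Bigr|
  \le \|\partial_t\xi\|_\infty\int_Q|w|\dz
  \le \frac{1}{\alpha r^2}\,|I|\int_B\Bigl(\dashint_B|w(t,\cdot)|\dx\Bigr)\dt\cdot\frac{|B|}{|B|},
\end{align*}
and then, inserting the time-sliced Poincaré estimate $\dashint_B|w(t,\cdot)|\le c\,r\dashint_B|\nabla w(t,\cdot)|$ and recalling $|Q|=|I||B|$, $|I|=\alpha r^2$, one gets
\begin{align*}
  \alpha\, r\,|Q|^{-1}\,|\skp{\partial_t w}{\xi}|
  \le \alpha\,r\,|Q|^{-1}\cdot\frac{1}{\alpha r^2}\cdot c\,r\int_Q|\nabla w|\dz
  = c\,|Q|^{-1}\int_Q|\nabla w|\dz = c\,\mathcal{M}_Q(\nabla w).
\end{align*}
Taking the supremum over $\xi\in\mathcal{F}_Q$ yields the claim.

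The main obstacle is the geometric step: making precise that ``$\tfrac45Q\not\subset\setR\times\Omega$'' forces $w(t,\cdot)$ to vanish on a \emph{fixed fraction} of $B=B_r$ for \emph{every} time slice $t\in I$, uniformly. Since the spatial ball $B$ is the same for all $t$ and the condition $\tfrac45Q\not\subset\setR\times\Omega$ translates (because $\Omega$ is the spatial factor) into $\tfrac45 B\not\subset\Omega$, i.e.\ $B$ contains a point within distance $\tfrac15 r$ of $\Omega^\complement$; then the fat complement property~\eqref{eq:ass_bnd} applied at that point and scale shows $|B\cap\Omega^\complement|\gtrsim|B|$. This needs a short argument about comparing the ball in~\eqref{eq:ass_bnd} with $B$ but is essentially elementary. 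Everything else — the integration by parts against $\xi\in C^\infty_0(Q)$, Hölder, the counting of powers of $r$ and $\alpha$ — is routine bookkeeping.
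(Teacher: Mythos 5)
Your argument is correct and is essentially the paper's own proof: integrate $\skp{\partial_t w}{\xi}=-\skp{w}{\partial_t\xi}$, use $\|\partial_t\xi\|_\infty\le(\alpha r^2)^{-1}$ to reduce to $\alpha r|Q|^{-1}\int_Q|w|\le c\,\dashint_Q|w|/r$, observe that $\tfrac45Q\not\subset\setR\times\Omega$ forces $|B\setminus\Omega|\ge c|B|$ by the fat complement property, and apply the spatial Poincar\'e inequality for functions vanishing on a fat subset slice by slice in time. The only cosmetic difference is that you write out the slice-by-slice bookkeeping explicitly where the paper condenses it into one line, and you candidly flag the short geometric argument needed to extract $|B\setminus\Omega|\ge c|B|$ from \eqref{eq:ass_bnd}, which the paper asserts without spelling out.
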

\begin{proof}
  We calculate
  \begin{align*}
    \alpha\,\mathcal{N}_Q(\partial_t w) &= \alpha \sup_{\xi
      \in \mathcal{F}_Q} \big( r \abs{Q}^{-1}
    \abs{\skp{w}{\partial_t \xi}} \big)\leq c\, \dashint_Q
    \frac{\abs{w}}{r}\dz.
  \end{align*}
  Let $Q =: I \times B$.  Since $\frac 45 Q \not\subset \setR \times
  \Omega$ and~$\Omega$ has fat complement, we have $\abs{B
    \setminus \Omega} \geq c\, \abs{B}$. Thus, we can apply the space
  \Poincare{} (with $w=0$ outside of $\setR \times \Omega$) to get
  \begin{align*}
    \dashint_{Q} \frac{\abs{w}}{r}\dz \leq c\, \dashint_{Q}
    \abs{\nabla w}\dz.
  \end{align*}
  This proves the claim.
\end{proof}

\begin{lemma}
  \label{lem:est_w-uj}
  The following holds.
  \begin{enumerate}
  \item \label{itm:est_w-uj_in} If $\frac 34 Q_j^{\alpha} \subset J \times
    \Omega$, then $w_j^{\alpha} = \mean{w}_{\eta_j \times I}$ and
    \begin{align*}
      \dashint_{\frac 34 Q_j^{\alpha}} \biggabs{ \frac{w-w_j^{\alpha}}{r_j}} \dz &\leq
      c\, \mathcal{M}_{\frac 34 Q_j^{\alpha}}(\nabla w) + c\, \alpha
      \mathcal{N}_{\frac 34 Q_j^{\alpha}}(\partial_t w).
    \end{align*}
  \item \label{itm:est_w-uj_in2} If $\frac 45 Q_j^{\alpha} \subset \setR \times
    \Omega$ and $\frac 34 Q_j^{\alpha} \not\subset J \times \Omega$, then
    $w_j^{\alpha}=0$ and
    \begin{align*}
      \dashint_{\frac 45 Q_j^{\alpha}} \biggabs{ \frac{w-w_j^{\alpha}}{r_j}} \dz &\leq
      c\, \mathcal{M}_{\frac 45 Q_j^{\alpha}}(\nabla w) + c\, \alpha
      \mathcal{N}_{\frac 45 Q_j^{\alpha}}(\partial_t w).
    \end{align*}
  \item \label{itm:est_w-uj_out}If $\frac 45 Q_j^{\alpha} \not\subset \setR
    \times \Omega$, then $w_j^{\alpha}=0$ and
    \begin{align*}
      \dashint_{Q_j^{\alpha}} \biggabs{ \frac{w-w_j^{\alpha}}{r_j}} \dz &\leq c\,
      \mathcal{M}_{Q_j^{\alpha}}(\nabla w).
    \end{align*}
  \end{enumerate}
\end{lemma}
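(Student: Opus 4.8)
The plan is to treat the three cases separately, in each case reducing to one of the Poincaré-type statements already established in Subsection~\ref{ssec:poincare}, namely Theorem~\ref{thm:poincare_par} and, for the cases touching the exterior of $\setR\times\Omega$, Lemma~\ref{lem:N_Qout}. The common mechanism is: write $w_j^\alpha$ as a weighted mean $\mean{w}_\rho$ over the relevant cylinder with a weight $\rho$ whose sup-norm is comparably bounded by its average (so that Theorem~\ref{thm:poincare_par} applies with a controlled constant $c_0$), and then absorb the term $\alpha\,\mathcal{N}_Q(\partial_t w)$ either as is (case \ref{itm:est_w-uj_in}) or via Lemma~\ref{lem:N_Qout} (cases \ref{itm:est_w-uj_in2}, \ref{itm:est_w-uj_out}).

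For \ref{itm:est_w-uj_in}: since $\tfrac34 Q_j^\alpha\subset J\times\Omega$, the definition of $w_j^\alpha$ gives $w_j^\alpha=\mean{w}_{\rho_j}$. Restricting $\rho_j$ to $\tfrac34 Q_j^\alpha$ and using property \ref{itm:P1} together with \ref{itm:P3}, the weight $\rho_j$ (or, if one prefers the product structure needed to invoke Lemma~\ref{lem:poincare_time}, a weight of the form $\chi_{I_j}\eta_j$ obtained by integrating out in time — this is where the identity $w_j^\alpha=\mean{w}_{\eta_j\times I}$ comes from) satisfies $\norm{\rho_j}_\infty\le c_0|\tfrac34 Q_j^\alpha|^{-1}\norm{\rho_j}_1$ with $c_0$ depending only on the Whitney constants. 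Hence Theorem~\ref{thm:poincare_par} applied on $Q=\tfrac34 Q_j^\alpha$ yields exactly
\begin{align*}
  \dashint_{\frac34 Q_j^\alpha}\Bigabs{\frac{w-w_j^\alpha}{r_j}}\dz
  \le c\,\mathcal{M}_{\frac34 Q_j^\alpha}(\nabla w)
  + c\,\alpha\,\mathcal{N}_{\frac34 Q_j^\alpha}(\partial_t w),
\end{align*}
which is the assertion.

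For \ref{itm:est_w-uj_in2} and \ref{itm:est_w-uj_out}: here $\tfrac34 Q_j^\alpha\not\subset J\times\Omega$, so by definition $w_j^\alpha=0$, and the left-hand side is just $\dashint \abs{w}/r_j$. In case \ref{itm:est_w-uj_in2} we have $\tfrac45 Q_j^\alpha\subset\setR\times\Omega$, so $\partial_t w=\divergence G$ holds on this cylinder and Theorem~\ref{thm:poincare_par} applies on $Q=\tfrac45 Q_j^\alpha$ with the trivial weight $\rho=\chi_{Q}$; to turn $\mean{w}_Q$ into $0$ we observe that $\tfrac45 Q_j^\alpha$ still meets $\setR\times\Omega^\complement$ is not needed — instead we note $16 Q_j^\alpha$ meets the complement of $\Oal$ by \ref{itm:whit2}, but more directly: since $\tfrac34 Q_j^\alpha\not\subset J\times\Omega$ while $\Omega$ has the fat complement property, the spatial ball $\tfrac45 B_j$ has $\abs{\tfrac45 B_j\setminus\Omega}\ge c\abs{B_j}$, and $w=0$ there, so the spatial Poincaré inequality lets us replace $\mean{w}_{\tfrac45 Q_j^\alpha}$ by $0$ at the cost of $c\,\mathcal{M}(\nabla w)$. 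Combining with Theorem~\ref{thm:poincare_par} gives the stated bound. Case \ref{itm:est_w-uj_out} is the easiest: $\tfrac45 Q_j^\alpha\not\subset\setR\times\Omega$, so Lemma~\ref{lem:N_Qout} applies on $Q=Q_j^\alpha$ and bounds $\alpha\,\mathcal{N}_{Q_j^\alpha}(\partial_t w)$ by $c\,\mathcal{M}_{Q_j^\alpha}(\nabla w)$; then the same fat-complement spatial Poincaré argument (now with $\abs{B_j\setminus\Omega}\ge c\abs{B_j}$) gives $\dashint_{Q_j^\alpha}\abs{w}/r_j\le c\,\mathcal{M}_{Q_j^\alpha}(\nabla w)$ directly, with no $\mathcal{N}$ term surviving.

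The main obstacle is bookkeeping rather than depth: one must verify in each case that the cylinder on which Poincaré is applied is genuinely admissible — in particular that, after passing between $\tfrac34 Q_j^\alpha$, $\tfrac45 Q_j^\alpha$ and $Q_j^\alpha$, the weight still satisfies the $L^\infty$-versus-$L^1$ comparability with a uniform constant, and that in the boundary cases the fat complement property \eqref{eq:ass_bnd} genuinely forces $\abs{B_j\setminus\Omega}\gtrsim\abs{B_j}$ (this uses that a dilate of $Q_j^\alpha$ sticks out of $J\times\Omega$, so the center of $B_j$ is within a controlled multiple of $r_j$ of $\Omega^\complement$). Once these geometric facts are in place, each estimate is an immediate application of the results of Subsection~\ref{ssec:poincare}.
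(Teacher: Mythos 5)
There is a genuine gap in your treatment of case \ref{itm:est_w-uj_in2}. You argue that, since $\tfrac34 Q_j^\alpha\not\subset J\times\Omega$, the fat complement property of $\Omega$ forces $\abs{\tfrac45 B_j\setminus\Omega}\ge c\,\abs{B_j}$, and you then invoke a spatial Poincar\'e inequality with zero boundary data. But this is incompatible with the standing hypothesis of case \ref{itm:est_w-uj_in2}: from $\tfrac45 Q_j^\alpha = \tfrac45 I_j\times\tfrac45 B_j\subset\setR\times\Omega$ one reads off immediately that $\tfrac45 B_j\subset\Omega$, so $\abs{\tfrac45 B_j\setminus\Omega}=0$. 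The cylinder $\tfrac34 Q_j^\alpha$ therefore does \emph{not} stick out of $\Omega$ in space; it sticks out of $J$ in \emph{time}. The part of $\tfrac45 Q_j^\alpha$ on which $w$ vanishes is the temporal overhang $\tfrac45 Q_j^\alpha\cap\big((\setR\times\Omega)\setminus(J\times\Omega)\big)$, and the correct mechanism (as in the paper) is to pick a weight~$\rho$ supported in that set, verify $\norm{\rho}_\infty\le c\,\abs{Q_j^\alpha}^{-1}\norm{\rho}_1$ (which requires checking that the overhang has measure comparable to $\abs{\tfrac45 Q_j^\alpha}$ -- this follows because $\tfrac45 I_j$ extends a fixed fraction of $\alpha r_j^2$ beyond $\tfrac34 I_j$, and $\tfrac34 I_j$ already leaves~$J$), observe that $\mean{w}_\rho=0=w_j^\alpha$, and then apply Theorem~\ref{thm:poincare_par} directly. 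Your argument, which relies on a spatial zero-boundary Poincar\'e inequality, cannot work here since $w$ has no reason to vanish on any portion of $\tfrac45 B_j$.

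Your case \ref{itm:est_w-uj_in} matches the paper (Theorem~\ref{thm:poincare_par} applied with $\rho=\rho_j$), and your case \ref{itm:est_w-uj_out} is correct and essentially the same as the paper's, though the paper organizes it differently: it applies Theorem~\ref{thm:poincare_par} with a weight supported in $Q_j^\alpha\cap(\setR\times\Omega)$ and then uses Lemma~\ref{lem:N_Qout} to convert the residual $\alpha\,\mathcal{N}_{Q_j^\alpha}(\partial_t w)$ into $\mathcal{M}_{Q_j^\alpha}(\nabla w)$, whereas you fold the spatial Poincar\'e step into a direct bound. Either works for \ref{itm:est_w-uj_out}, but \ref{itm:est_w-uj_in2} needs to be repaired along the lines indicated above.
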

\begin{proof}
  Part~\ref{itm:est_w-uj_in} follows immediately from
  Theorem~\ref{thm:poincare_par} with~$\rho=\rho_j$.

  Let us consider part~\ref{itm:est_w-uj_in2}. In this situation
  $\setR \times \Omega \setminus (J \times \Omega)$ contains a large
  part of~$\frac 45 Q_j^\alpha$ so that we can find a a function~$\rho
  \in L^\infty$ with support in~$\frac 45 Q_j^\alpha \cap ((\setR
  \times \Omega) \setminus (J \times \Omega))$ such
  that~$\norm{\rho}_\infty \leq c\, \abs{Q_j^\alpha}^{-1}
  \norm{\rho}_1$. Since~$w=0$ on $\support(\rho)$, we have $w_j^\alpha
  = 0 = \mean{w}_\rho$. Again the claim follows by
  Theorem~\ref{thm:poincare_par}.

  \noindent 
  Let us now prove~\ref{itm:est_w-uj_out}. Since~$\frac 45 Q_j^\alpha
  \not\subset \setR \times \Omega$, we can find a function~$\rho$ with
  support outside in~$Q_j^\alpha \cap \setR \times \Omega$ with
  $\norm{\rho}_\infty \leq c\, \abs{Q_j^\alpha}^{-1}
  \norm{\rho}_1$. Since~$w=0$ on $\support(\rho)$, we have $w_j^\alpha
  = 0 = \mean{w}_\rho$. Now Theorem~\ref{thm:poincare_par} proofs our
  claim with an additional $\mathcal{N}_{Q_j^\alpha}(\partial_t w)$ on
  term on the right hand side. Due to Lemma~\ref{lem:N_Qout} this term
  can be controlled again by~$M_{Q_j^\alpha}(\nabla w)$, which proves
  our claim.
\end{proof}
\begin{lemma}
  \label{lem:w-w_j_lambda}
  We have
  \begin{align*}
    \dashint_{\frac 34 Q_j^{\alpha}} \biggabs{
      \frac{w-w_j^{\alpha}}{r_j}} \dz &\leq \dashint_{Q_j^{\alpha}}
    \abs{\nabla w} \dz + \alpha \mathcal{N}_{\frac 45
      Q_j^\alpha}(\partial_t w) + \alpha \dashint_{Q_j^{\alpha}}
    \abs{G} \dz \leq c\, \lambda.
  \end{align*}
\end{lemma}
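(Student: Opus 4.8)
The plan is to combine the three cases of Lemma~\ref{lem:est_w-uj} with the geometric properties of the Whitney covering and the very definition of the bad set~$\Oal$. The estimate splits into two inequalities: the first (the pointwise average bound on~$\frac 34 Q_j^\alpha$ by quantities living on~$Q_j^\alpha$ and~$\frac 45 Q_j^\alpha$) and the second (that the right-hand side is~$\lesssim \lambda$).

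For the first inequality I would argue by cases according to which alternative of Lemma~\ref{lem:est_w-uj} applies. If $\frac 34 Q_j^\alpha \subset J\times\Omega$, part~\ref{itm:est_w-uj_in} gives the bound by $\mathcal{M}_{\frac34 Q_j^\alpha}(\nabla w) + \alpha\,\mathcal{N}_{\frac34 Q_j^\alpha}(\partial_t w)$; here one uses that $\mathcal{M}$ is monotone in the cylinder and that $\frac34 Q_j^\alpha \subset Q_j^\alpha$, and that $\partial_t w = \divergence G$ on this cylinder so that $\mathcal{N}_{\frac34 Q_j^\alpha}(\partial_t w) \leq \dashint_{\frac34 Q_j^\alpha}\abs{G}\dz \le c\,\dashint_{Q_j^\alpha}\abs{G}\dz$ by Remark~\ref{rem:NvsMdiv} (the factor $c$ from $\abs{Q_j^\alpha}\sim\abs{\frac34 Q_j^\alpha}$). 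If $\frac45 Q_j^\alpha \subset \setR\times\Omega$ but $\frac34 Q_j^\alpha \not\subset J\times\Omega$, part~\ref{itm:est_w-uj_in2} controls the average over $\frac45 Q_j^\alpha \supset \frac34 Q_j^\alpha$ by $\mathcal{M}_{\frac45 Q_j^\alpha}(\nabla w) + \alpha\,\mathcal{N}_{\frac45 Q_j^\alpha}(\partial_t w)$, and again $\mathcal{M}_{\frac45 Q_j^\alpha}(\nabla w)\le c\,\dashint_{Q_j^\alpha}\abs{\nabla w}\dz$. Finally if $\frac45 Q_j^\alpha\not\subset\setR\times\Omega$, part~\ref{itm:est_w-uj_out} gives the bound purely by $\mathcal{M}_{Q_j^\alpha}(\nabla w)$. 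In all cases one lands on the right-hand side displayed in the lemma, absorbing the $\mathcal{N}$-term into the $\alpha\,\mathcal{N}_{\frac45 Q_j^\alpha}(\partial_t w)$ slot (which is vacuously an upper bound when it is not needed, since $\mathcal{N}\geq 0$), and passing from $\frac34 Q_j^\alpha$ or $\frac45 Q_j^\alpha$ to $Q_j^\alpha$ in the $\mathcal{M}$-terms using $\mathcal{M}_{\sigma Q}(\nabla w)\le \sigma^{-(m+2)}\mathcal{M}_Q(\nabla w)$ for $\sigma\in(\frac34,1]$.

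For the second inequality, the key is property~\ref{itm:whit2}: $16 Q_j^\alpha \cap (\setR^{m+1}\setminus\Oal)\neq\emptyset$, so there is a point $x_j \notin \Oal$ with $x_j \in 16 Q_j^\alpha$. Since $x_j\notin\Oal$, by the definition of~$\Oal$ we have $(\Ma(\nabla w))(x_j)\le\lambda$ and $\alpha(\Ma(G))(x_j)\le\lambda$. Now $Q_j^\alpha$, $\frac45 Q_j^\alpha$, $16 Q_j^\alpha$ are all contained in a single cylinder from the family $\mathcal{Q}^\alpha$ centered comparably, namely in a cylinder belonging to $\mathcal{Q}^\alpha$ that contains $x_j$ and has radius $\sim 16 r_j$; hence $\dashint_{Q_j^\alpha}\abs{\nabla w}\dz \le c\,(\Ma(\nabla w))(x_j)\le c\,\lambda$, and likewise $\alpha\,\dashint_{Q_j^\alpha}\abs{G}\dz\le c\,\alpha(\Ma(G))(x_j)\le c\,\lambda$. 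For the $\mathcal{N}$-term I would invoke Remark~\ref{rem:NvsMdiv} when $\partial_t w = \divergence G$ on $\frac45 Q_j^\alpha$ (i.e. when $\frac45 Q_j^\alpha\subset\setR\times\Omega$) to bound $\alpha\,\mathcal{N}_{\frac45 Q_j^\alpha}(\partial_t w)\le \alpha\,\dashint_{\frac45 Q_j^\alpha}\abs{G}\dz\le c\,\lambda$; in the remaining case $\frac45 Q_j^\alpha\not\subset\setR\times\Omega$, Lemma~\ref{lem:N_Qout} gives $\alpha\,\mathcal{N}_{\frac45 Q_j^\alpha}(\partial_t w)\le c\,\mathcal{M}_{\frac45 Q_j^\alpha}(\nabla w)\le c\,\lambda$ by the same comparison with $x_j$.

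The main obstacle is bookkeeping the domain issues, i.e. checking in which of the three geometric alternatives one is and making sure the $\mathcal{N}$-term is always legitimately controlled — either by $G$ via the equation, or by $\nabla w$ via Lemma~\ref{lem:N_Qout} when the cylinder pokes outside $\setR\times\Omega$. Everything else is routine: monotonicity of $\mathcal{M}$ in the cylinder, the measure comparisons $\abs{\sigma Q}\sim\abs{Q}$, and the retrieval of a good point $x_j\notin\Oal$ from~\ref{itm:whit2} together with the fact that a dilate of $Q_j^\alpha$ still lies in a member of $\mathcal{Q}^\alpha$ through $x_j$.
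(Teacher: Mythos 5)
Your proof is correct and follows essentially the same route as the paper's: the $\le c\lambda$ bound comes from \ref{itm:whit2} (a point of~$(\Oal)^c$ in~$16Q_j^\alpha$) together with Remark~\ref{rem:NvsMdiv} when $\frac45 Q_j^\alpha\subset\setR\times\Omega$ and Lemma~\ref{lem:N_Qout} otherwise, while the first inequality is precisely the content of Lemma~\ref{lem:est_w-uj} combined with $\abs{\sigma Q}\sim\abs{Q}$. The paper's own proof only writes out the~$\le c\lambda$ step, taking the first inequality as implicit from Lemma~\ref{lem:est_w-uj}, so yours is simply a more explicit version of the same argument.
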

\begin{proof}
  Since $16Q_j^{\alpha} \cap (\setR^{m+1} \setminus \Oal) \neq
  \emptyset$, it follows that $\mathcal{M}_{16Q_j^{\alpha}}(\nabla w)
  \leq \lambda$ and $\alpha\mathcal{M}_{16Q_j^{\alpha}}(G) \leq
  \lambda$. Thus also $\mathcal{M}_{Q_j^{\alpha}}(\nabla w) \leq
  c\,\lambda$ and $\alpha \mathcal{M}_{Q_j^{\alpha}}(G) \leq
  c\,\lambda$. 

  The estimate $\alpha \mathcal{N}_{Q_j^\alpha}(\partial_t w) \leq
  c\,\lambda$ follows from Remark~\ref{rem:NvsMdiv} if $\frac 45
  Q_j^\alpha \subset \setR \times \Omega$ and from
  Lemma~\ref{lem:N_Qout} if $\frac 45 Q_j^\alpha \not\subset \setR
  \times \Omega$.
\end{proof}
\begin{lemma}
  \label{lem:w-w_j_lambda_phi}
  We have
  \begin{align*}
    \dashint_{\frac 34 Q_j^{\alpha}} \phi\bigg( \biggabs{
      \frac{w-w_j^{\alpha}}{r_j}} \bigg) \dz &\leq \dashint_{Q_j^{\alpha}}
    \phi(\abs{\nabla w}) \dz + \phi \Bigg( \alpha \dashint_{Q^\alpha_j} \abs{G}\,dx \Bigg).
  \end{align*}
\end{lemma}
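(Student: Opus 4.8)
The plan is to mimic the proof of Lemma~\ref{lem:est_w-uj} almost verbatim, replacing every use of the weak parabolic \Poincare{} inequality (Theorem~\ref{thm:poincare_par}) by its modular Orlicz counterpart Lemma~\ref{lem:poincare_phi}, and invoking $\Delta_2(\phi)<\infty$ (automatic under Assumption~\ref{ass:phi}) together with the finite overlap of the Whitney covering to keep all multiplicative constants — depending only on $m$, the characteristics of $\phi$, and the covering constants — under control. Accordingly I would split into the same three cases as in Lemma~\ref{lem:est_w-uj}, according to the position of the Whitney cylinder $Q_j^\alpha$ relative to $J\times\Omega$ and to $\setR\times\Omega$.

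First, if $\frac34 Q_j^\alpha\subset J\times\Omega$ then $w_j^\alpha=\mean{w}_{\rho_j}$, the equation $\partial_t w=\divergence G$ holds on $\frac34 Q_j^\alpha$, and by properties~\ref{itm:P1} and~\ref{itm:P3} of the partition of unity the weight $\rho_j$ satisfies $\norm{\rho_j}_\infty\le c\,\abs{\frac34 Q_j^\alpha}^{-1}\norm{\rho_j}_1$. Then Lemma~\ref{lem:poincare_phi} applied on $\frac34 Q_j^\alpha$ with $\rho=\rho_j$ gives directly
\begin{align*}
\dashint_{\frac34 Q_j^\alpha}\phi\Big(\Bigabs{\tfrac{w-w_j^\alpha}{r_j}}\Big)\dz
\le c\dashint_{\frac34 Q_j^\alpha}\phi(\abs{\nabla w})\dz+c\,\phi\Big(\alpha\dashint_{\frac34 Q_j^\alpha}\abs{G}\dz\Big),
\end{align*}
and enlarging the domains of integration on the right from $\frac34 Q_j^\alpha$ to $Q_j^\alpha$ — using monotonicity of $\phi$ in the $G$-term and $\Delta_2(\phi)$ to absorb the resulting dimensional factor — yields the claim.

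Second, if $\frac45 Q_j^\alpha\subset\setR\times\Omega$ but $\frac34 Q_j^\alpha\not\subset J\times\Omega$, then $w_j^\alpha=0$; as in Lemma~\ref{lem:est_w-uj}\ref{itm:est_w-uj_in2} I would pick $\rho\in L^\infty$ supported in $\frac45 Q_j^\alpha\cap((\setR\times\Omega)\setminus(J\times\Omega))$ with $\norm{\rho}_\infty\le c\,\abs{Q_j^\alpha}^{-1}\norm{\rho}_1$, so that $w_j^\alpha=0=\mean{w}_\rho$ because $w$ vanishes on $\support\rho$; since the equation still holds on $\frac45 Q_j^\alpha\subset\setR\times\Omega$, Lemma~\ref{lem:poincare_phi} on $\frac45 Q_j^\alpha$, followed by restricting the left-hand integral to $\frac34 Q_j^\alpha$ and enlarging the right-hand integrals to $Q_j^\alpha$, gives the claim. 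Third, if $\frac45 Q_j^\alpha\not\subset\setR\times\Omega$, then again $w_j^\alpha=0$, and the fat complement property of $\Omega$ gives $\abs{B_j\setminus\Omega}\ge c\,\abs{B_j}$; since $w=0$ outside $\setR\times\Omega$, for a.e.\ $t\in I_j$ the slice $w(t,\cdot)$ vanishes on a subset of $B_j$ of relative measure $\ge c$, so the purely spatial Orlicz--\Poincare{} inequality (e.g.~\cite[Theorem~7]{DieE08}) yields $\dashint_{B_j}\phi(\abs{w(t,\cdot)}/r_j)\dx\le c\dashint_{B_j}\phi(\abs{\nabla w(t,\cdot)})\dx$; integrating over $I_j$ and restricting to $\frac34 Q_j^\alpha$ bounds the left side of the claim by $c\dashint_{Q_j^\alpha}\phi(\abs{\nabla w})\dz$, with no time-derivative term required — the modular analogue of the role of Lemma~\ref{lem:N_Qout} in Lemma~\ref{lem:est_w-uj}\ref{itm:est_w-uj_out}.

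The only genuinely delicate point, as opposed to bookkeeping, is that $\phi$ is not positively homogeneous, so the routine domain manipulations of the stationary argument — enlarging or shrinking the integration cylinders, and in particular passing from $\frac34 Q_j^\alpha$ or $\frac45 Q_j^\alpha$ to $Q_j^\alpha$ \emph{inside} the argument of $\phi$ in the $G$-term — must be justified through monotonicity of $\phi$ and $\Delta_2(\phi)$. Once that is observed, everything else is a direct transcription of the arguments already carried out for Lemma~\ref{lem:est_w-uj} and Lemma~\ref{lem:w-w_j_lambda}.
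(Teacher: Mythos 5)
Your proposal is correct and follows the approach the paper itself gestures at: the paper's proof is the one sentence ``similar to Lemma~\ref{lem:est_w-uj} and Lemma~\ref{lem:w-w_j_lambda}, using Lemma~\ref{lem:poincare_phi} instead of Theorem~\ref{thm:poincare_par},'' and you carry out exactly that three-case analysis. The one place where a literal substitution of Lemma~\ref{lem:poincare_phi} for Theorem~\ref{thm:poincare_par} does not quite work is case (c), where $\frac45 Q_j^\alpha\not\subset\setR\times\Omega$: there the hypothesis $\partial_t w=\divergence G$ of Lemma~\ref{lem:poincare_phi} is not available on the whole cylinder, and in the $L^1$ setting the paper instead passes through Lemma~\ref{lem:N_Qout} to trade $\alpha\mathcal{N}_{Q_j^\alpha}(\partial_t w)$ for $\mathcal{M}_{Q_j^\alpha}(\nabla w)$. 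Your slice-by-slice spatial Orlicz--\Poincare{} argument is the correct modular replacement for that step (and is in fact the same mechanism hidden inside the proof of Lemma~\ref{lem:N_Qout}), since one cannot simply push the $L^1$ bound $\alpha\mathcal{N}_{Q_j^\alpha}(\partial_t w)\le c\,\mathcal{M}_{Q_j^\alpha}(\nabla w)$ through $\phi$ in modular form. So you have filled in the detail the paper leaves implicit, and your attention to $\Delta_2(\phi)$ and monotonicity when enlarging domains inside $\phi$ is exactly the bookkeeping the paper suppresses.
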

\begin{proof}
  The proof is similar to the one of Lemma~\ref{lem:est_w-uj} and
  Lemma~\ref{lem:w-w_j_lambda} by using Lemma~\ref{lem:poincare_phi}
  instead of Theorem~\ref{thm:poincare_par}.
\end{proof}

\subsection{Stability}
\label{sec:stability}

In this subsection we will show the stability of the Lipschitz
truncation with respect to some norms.
\begin{lemma}
  \label{lem:stabL1}
  If $w\in L^1(J, W^{1,1}_0(\Omega))$ and $G\in L^1(J\times \Omega)$,
  then $\wal\in L^1(J, W^{1,1}_0(\Omega))$. 
  Moreover,
  \begin{align*}
    \int_{ \Oal } \abs{\nabla (w-\wal)} \dz\leq c\int_{\Oal}\abs{\nabla
      w}\dz+ \lambda \abs{\Oal}.
  \end{align*}
\end{lemma}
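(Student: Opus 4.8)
The plan is to estimate the gradient of the correction $\nabla(w-\wal) = \sum_j \nabla(\rho_j(w-w_j^\alpha))$ on the bad set, exploiting the locally finite Whitney structure. First I would note that, away from $\Oal$, $\wal = w$ by construction (since $\support \rho_j \subset \frac34 Q_j^\alpha \subset \Oal$), so the integral $\int_{\Oal}\abs{\nabla(w-\wal)}\dz$ really is over all of the support of the correction. On each Whitney piece one writes $\nabla(\rho_j(w-w_j^\alpha)) = \rho_j \nabla w + (w-w_j^\alpha)\nabla\rho_j$; the first term is harmless since $\abs{\rho_j}\le 1$ and the supports have bounded overlap by \ref{itm:whit4}, giving $\sum_j \int \rho_j\abs{\nabla w}\dz \le c\int_{\Oal}\abs{\nabla w}\dz$. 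The delicate term is $(w-w_j^\alpha)\nabla\rho_j$, and here the crucial point is that we may replace $w-w_j^\alpha$ by $w - \mean{w}_{\rho_k}$ for any neighbour $k$ (using $\sum_j \nabla\rho_j = 0$ locally, i.e. $\nabla\rho_j$ summed over $j\in A_k$ vanishes on $\frac34 Q_k^\alpha$) so that on $\frac34 Q_k^\alpha$ we can write $\sum_{j\in A_k}(w-w_j^\alpha)\nabla\rho_j = \sum_{j\in A_k}(w_k^\alpha - w_j^\alpha)\nabla\rho_j$ and reduce to controlling the differences $\abs{w_k^\alpha - w_j^\alpha}$ of the local averages between overlapping cylinders.

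The key step is therefore the estimate $\abs{w_k^\alpha - w_j^\alpha} \le c\, r_j\, \lambda$ (equivalently $\le c\, r_j(\mathcal M_{Q_j^\alpha}(\nabla w) + \alpha\mathcal N(\partial_t w) + \ldots)$) for $j \in A_k$. This follows from Lemma~\ref{lem:est_w-uj} together with \ref{itm:whit_fat} / \ref{itm:whit_fat34}: since $\abs{\frac34 Q_j^\alpha \cap \frac34 Q_k^\alpha}$ is comparable to both $\abs{Q_j^\alpha}$ and $\abs{Q_k^\alpha}$ and the radii are comparable by \ref{itm:whit_radii}, one has
\begin{align*}
  \abs{w_k^\alpha - w_j^\alpha}
  &\le \dashint_{\frac34 Q_j^\alpha}\abs{w - w_j^\alpha}\dz + \dashint_{\frac34 Q_k^\alpha}\abs{w - w_k^\alpha}\dz + (\text{cross terms on the overlap})
  \\
  &\le c\, r_j\Big(\mathcal M_{\frac34 Q_j^\alpha}(\nabla w) + \alpha\mathcal N_{\frac34 Q_j^\alpha}(\partial_t w) + \mathcal M_{\frac34 Q_k^\alpha}(\nabla w) + \alpha\mathcal N_{\frac34 Q_k^\alpha}(\partial_t w)\Big).
\end{align*}
Then $\abs{(w_k^\alpha - w_j^\alpha)\nabla\rho_j} \le c\, \abs{w_k^\alpha - w_j^\alpha}/r_j$ by \ref{itm:P3}, and Lemma~\ref{lem:w-w_j_lambda} bounds each such expression by $c\,\lambda$. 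Alternatively, and this is the cleaner route, one keeps the quantitative bound in terms of $\mathcal M_{Q_j^\alpha}(\nabla w)$ and $\alpha\dashint_{Q_j^\alpha}\abs G\dz$ rather than jumping straight to $\lambda$, which will be needed for the later modular statement anyway.

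Putting this together: on each $\frac34 Q_k^\alpha$, $\abs{\nabla(w-\wal)} \le \rho_k$-weighted version of $\abs{\nabla w}$ plus $\sum_{j\in A_k}\abs{w_k^\alpha - w_j^\alpha}\abs{\nabla\rho_j}$, and since $\#A_k \le 120^{m+2}$ by \ref{itm:whit_sum} the second sum is $\le c\,\lambda$. Integrating over $\Oal = \bigcup_k \frac12 Q_k^\alpha$ and using the bounded overlap \ref{itm:whit4}, one finds
\begin{align*}
  \int_{\Oal}\abs{\nabla(w-\wal)}\dz \le c\sum_k\Big(\int_{Q_k^\alpha}\abs{\nabla w}\dz + \lambda\abs{Q_k^\alpha}\Big) \le c\int_{\Oal}\abs{\nabla w}\dz + c\,\lambda\abs{\Oal}.
\end{align*}
The $L^1(J,W^{1,1}_0(\Omega))$ membership of $\wal$ follows because each $\rho_j(w-w_j^\alpha)$ lies in $W^{1,1}_0$ with support in $\frac34 Q_j^\alpha \cap ((-t_0,t_0)\times\Omega)$ by \eqref{eq:supp_bad_parts}, the sum is locally finite, and the estimate just derived shows the series converges in $L^1$; the zero boundary values are preserved since the truncation agrees with $w$ near $\partial\Omega$ on those cylinders not fully inside $J\times\Omega$ (there $w_j^\alpha = 0$ and $w$ itself vanishes at the boundary). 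The main obstacle is the bookkeeping around the neighbour-averaging trick — making sure the cancellation $\sum_{j\in A_k}\nabla\rho_j = 0$ is used correctly and that the overlap/radii properties \ref{itm:whit_fat}--\ref{itm:whit_sum} are invoked to pass from averages over $Q_j^\alpha$ to averages over the overlaps $\frac34 Q_j^\alpha \cap \frac34 Q_k^\alpha$; once that is in place the rest is the standard Whitney summation.
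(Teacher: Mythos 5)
Your proof is correct, but it is a detour: the cancellation trick $\sum_{j\in A_k}\nabla\rho_j = 0$ that you treat as ``the crucial point'' is not actually needed here, and the paper does not use it. Starting from $\nabla(w-\wal) = \nabla w + \sum_j (w-w_j^\alpha)\nabla\rho_j$ on $\Oal$ (using $\sum_j\rho_j\equiv 1$ there), the paper simply applies the bound $\dashint_{\frac34 Q_j^\alpha}\frac{\abs{w-w_j^\alpha}}{r_j}\dz\leq c\lambda$ of Lemma~\ref{lem:w-w_j_lambda} (which is already in terms of the ``centered'' average $w_j^\alpha$) to each summand, integrates, and uses the finite overlap \ref{itm:whit4} to collect $\sum_j\abs{\tfrac34 Q_j^\alpha}\leq c\abs{\Oal}$. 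No replacement of $w-w_j^\alpha$ by $w_k^\alpha-w_j^\alpha$ is required. Your route of passing to the differences $w_k^\alpha-w_j^\alpha$ (essentially re-deriving Lemma~\ref{lem:diff_uj_uk}) is what the paper uses later for the Lipschitz bounds (Lemma~\ref{lem:lip-nablaw} and Lemma~\ref{lem:lip-wt}), where one estimates $\nabla\wal$ on an arbitrary parabolic cylinder $Q$ living inside a Whitney piece and cannot invoke the full-cylinder \Poincare{} bound directly; but for the $L^1$ stability over the whole bad set, the direct argument is shorter and avoids the extra overlap bookkeeping you flag as the main obstacle.
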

\begin{proof}
  It follows from the definition of~$\wal$ that
  \begin{align*}
    w - \wal &= \sum_j \rho_j (w - w^\alpha_j).
  \end{align*}
  Due to~\eqref{eq:supp_bad_parts} the sum is zero outside of $\Oal$. Using
  that $\sum_j \rho_j =1$ on $\Oal$ we get
  \begin{align}
    \label{eq:nablawwal}
    \nabla(w - \wal) &= \nabla w + \sum_j \nabla \rho_j (w - w^\alpha_j).
  \end{align}
  Now it follows with the help of~\ref{itm:P3}, \ref{itm:whit1},
  \ref{itm:whit4} and \eqref{eq:supp_bad_parts} that
  \begin{align*}
    \int_{ \Oal } \abs{\nabla (w-\wal)}\dz &\leq \int_{\Oal} \abs{
      \nabla w} \dz +
    c\sum_k\int_{\frac34Q_j^{\alpha}}\Bigabs{\frac{w-w_j^{\alpha}}{r_j}}
    \dz.
  \end{align*}
  This and Lemma~\ref{lem:w-w_j_lambda} implies
  \begin{align*}
    \int_{ \Oal } \abs{\nabla (w-\wal)}\dz &\leq \int_{\Oal} \abs{
      \nabla w} \dz+ c\, \lambda \abs{\Oal},
  \end{align*}
  which proves the lemma.
\end{proof}

\begin{lemma} 
  \label{lem:stabLphi}
 We get
  \begin{align*}
    \int_{ \Oal } \phi(\abs{\nabla (w-\wal)}) \,dz \leq
    c\int_{\Oal}\phi(\abs{\nabla w})+ c\, \abs{\Oal}
    \phi(\lambda).
  \end{align*}
\end{lemma}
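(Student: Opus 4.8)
The plan is to mimic the proof of Lemma~\ref{lem:stabL1}, replacing the linear $L^1$-bound by the modular $\phi$-bound. As in that proof we start from the identity
\begin{align*}
  \nabla(w - \wal) &= \nabla w + \sum_j \nabla \rho_j (w - w^\alpha_j),
\end{align*}
which is valid on~$\Oal$ because $\sum_j \rho_j = 1$ there, and which vanishes off~$\Oal$ by~\eqref{eq:supp_bad_parts}. Since $\phi$ satisfies the~$\Delta_2$-condition and is convex, for the finite sum we may apply the pointwise bounded overlap~\ref{itm:whit4}: at every point of~$\Oal$ only at most $120^{m+2}$ of the functions $\nabla\rho_j(w-w^\alpha_j)$ are nonzero, so by Jensen's inequality (for the normalized counting measure on those indices)
\begin{align*}
  \phi\Big(\abs{\nabla(w-\wal)}\Big)
  &\leq c\,\phi(\abs{\nabla w}) + c \sum_j \chi_{\frac34 Q_j^\alpha}\,
  \phi\Big(\abs{\nabla\rho_j}\,\abs{w - w^\alpha_j}\Big),
\end{align*}
with $c=c(m,\Delta_2(\phi))$. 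Using~\ref{itm:P3}, namely $\abs{\nabla\rho_j}\leq c/r_j$, and again the $\Delta_2$-condition to absorb that constant, the $j$-th term is dominated by $c\,\chi_{\frac34 Q_j^\alpha}\,\phi(\abs{w-w^\alpha_j}/r_j)$.

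Next I would integrate over~$\Oal$ and swap sum and integral. By~\ref{itm:whit1} each cylinder $\tfrac34 Q_j^\alpha$ lies in~$\Oal$ (indeed $8Q_j^\alpha\subset\Oal$ by~\ref{itm:whit2}), so
\begin{align*}
  \int_{\Oal}\phi(\abs{\nabla(w-\wal)})\dz
  &\leq c\int_{\Oal}\phi(\abs{\nabla w})\dz
  + c\sum_j \abs{\tfrac34 Q_j^\alpha}\,
  \dashint_{\frac34 Q_j^\alpha}\phi\Big(\abs{\tfrac{w-w_j^\alpha}{r_j}}\Big)\dz.
\end{align*}
Now Lemma~\ref{lem:w-w_j_lambda_phi} bounds each averaged modular by $\dashint_{Q_j^\alpha}\phi(\abs{\nabla w})\dz + \phi\big(\alpha\dashint_{Q_j^\alpha}\abs{G}\dz\big)$. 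Since $16Q_j^\alpha$ meets the complement of~$\Oal$ (property~\ref{itm:whit2}), we have $\mathcal{M}_{Q_j^\alpha}(\nabla w)\leq c\lambda$ and $\alpha\mathcal{M}_{Q_j^\alpha}(G)\leq c\lambda$, hence $\phi(\alpha\dashint_{Q_j^\alpha}\abs{G})\leq \phi(c\lambda)\leq c\,\phi(\lambda)$ by~$\Delta_2$; likewise, Jensen applied to the convex $\phi$ gives $\phi(\mathcal{M}_{Q_j^\alpha}(\nabla w))\leq \dashint_{Q_j^\alpha}\phi(\abs{\nabla w})\dz$. Therefore each summand is controlled by $\abs{Q_j^\alpha}\big(\dashint_{Q_j^\alpha}\phi(\abs{\nabla w})\dz + \phi(\lambda)\big)$, i.e. by $\int_{Q_j^\alpha}\phi(\abs{\nabla w})\dz + \abs{Q_j^\alpha}\phi(\lambda)$.

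Finally I would sum in~$j$. Because $Q_j^\alpha\subset 8Q_j^\alpha\subset\Oal$ and the family $\{4Q_j^\alpha\}$ (a fortiori $\{Q_j^\alpha\}$) has bounded overlap~\ref{itm:whit4}, we get $\sum_j\int_{Q_j^\alpha}\phi(\abs{\nabla w})\dz\leq c\int_{\Oal}\phi(\abs{\nabla w})\dz$ and $\sum_j\abs{Q_j^\alpha}\leq c\abs{\Oal}$, which yields
\begin{align*}
  \int_{\Oal}\phi(\abs{\nabla(w-\wal)})\dz
  &\leq c\int_{\Oal}\phi(\abs{\nabla w})\dz + c\,\abs{\Oal}\,\phi(\lambda),
\end{align*}
as claimed. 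The only slightly delicate point is the first step: one must invoke the \emph{finite} overlap~\ref{itm:whit4} to move $\phi$ inside the sum with a dimensional constant (a naive triangle inequality would produce an infinite sum inside $\phi$), but since at each point the sum has at most $120^{m+2}$ terms, convexity and $\Delta_2$ handle this cleanly; everything else is the bookkeeping already carried out in Lemma~\ref{lem:stabL1}.
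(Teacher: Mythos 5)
Your proof is correct and follows essentially the same route as the paper's: you start from the identity~\eqref{eq:nablawwal}, use the bounded overlap~\ref{itm:whit4} together with convexity and the $\Delta_2$-condition to move $\phi$ inside the sum, then invoke Lemma~\ref{lem:w-w_j_lambda_phi} to control the local modulars and the fact that $16Q_j^\alpha$ meets $(\Oal)^c$ to bound $\alpha\dashint_{Q_j^\alpha}\abs{G}$ by $c\lambda$, and finally sum with bounded overlap. You spell out the Jensen/counting-measure argument for moving $\phi$ inside the sum, which the paper compresses into a citation of~\ref{itm:P3},~\ref{itm:whit4} and the $\Delta_2$-condition; the parenthetical Jensen bound $\phi(\mathcal{M}_{Q_j^\alpha}(\nabla w))\le\dashint_{Q_j^\alpha}\phi(\abs{\nabla w})\dz$ is superfluous here since Lemma~\ref{lem:w-w_j_lambda_phi} already produces the modular term directly, but it does no harm.
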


\begin{proof}
  The proof is similar to Lemma~\ref{lem:stabL1}.  Starting
  with~\eqref{eq:nablawwal} and using ~\ref{itm:P3}, \ref{itm:whit1},
  \ref{itm:whit4}, \eqref{eq:supp_bad_parts}, and
  the~$\Delta_2$-condition we get
  \begin{align*}
    \int_{ \Oal } \phi(\abs{\nabla (w-\wal)}) \dz
    &\leq \int_{\Oal} \phi(\abs{ \nabla w}) \dz+
    c\sum_k\int_{\frac34Q_j^{\alpha}}\phi\bigg(
    \Bigabs{\frac{w-w_j^{\alpha}}{r_j}} \bigg) \dz.
  \end{align*}
  By Lemma~\ref{lem:w-w_j_lambda_phi} we
  can estimate the summands of the second part by
  \begin{align*}
    \int_{\frac34Q_j^{\alpha}}\phi\bigg(
    \Bigabs{\frac{w-w_j^{\alpha}}{r_j}} \bigg) \dz &\leq c\,
    \int_{\frac34Q_j^{\alpha}}\phi(\abs{\nabla w}) \dz + c\,
    \abs{Q^\alpha_j} \phi\bigg( \alpha
    \dashint_{\frac34Q_j^{\alpha}} \abs{G}\,dz \bigg).
  \end{align*}
  Using Lemma~\ref{lem:w-w_j_lambda}, we see that the mean value
  integral in the last term is bounded by~$c\,\lambda$, so overall we get
  \begin{align*}
    \int_{ \Oal } \phi(\abs{\nabla (w-\wal)}) \dz
    &\leq \int_{\Oal} \phi(\abs{ \nabla w}) \dz+
    c\, \abs{\Oal} \phi(\lambda),
  \end{align*}
  which proves the lemma.
\end{proof}

\section{Lipschitz property}
\label{sec:lipschitz-property}

In this section we show that the truncated function~$\wal$ has some
sort of Lipschitz properties. In particular, we used
$\mathcal{M}^\alpha(\nabla w)$ and
$\alpha\,\mathcal{N}^\alpha(\partial_t w)$ (more precisely its upper
bound $\mathcal{M}^\alpha(G)$) to define the bad set, where we
truncate the function. 
It turns out  that
$\mathcal{M}^\alpha(\nabla \wal) + \alpha
\mathcal{N}^\alpha(\partial_t \wal) \leq c\, \lambda$.
\begin{lemma}
  \label{lem:diff_uj_uk}
  \begin{align*}
    \sum_{j \in A_k} \frac{\abs{w_j^{\alpha}-w_k^{\alpha}}}{r_j} &\leq c\, \sum_{j \in
      A_k} \dashint_{\frac 34 Q_j^{\alpha}} \frac{\abs{w-w_j^{\alpha}}}{r_j} \dz \leq
    c\, \lambda.
  \end{align*}
\end{lemma}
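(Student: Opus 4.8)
The plan is to bound each difference $\abs{w_j^\alpha - w_k^\alpha}/r_j$ by averages of $w - w_j^\alpha$ over the overlapping Whitney cylinders, and then invoke Lemma~\ref{lem:w-w_j_lambda}. First I would treat the case where at least one of the two cylinders is ``outside'' (so that the corresponding $w_\ell^\alpha = 0$) separately from the generic ``inside'' case. In either event the key geometric fact is~\ref{itm:whit_fat34} (together with~\ref{itm:whit_radii}): for $j \in A_k$ the intersection $\tfrac34 Q_j^\alpha \cap \tfrac34 Q_k^\alpha$ has measure comparable to $\abs{Q_j^\alpha} \sim \abs{Q_k^\alpha}$, and the radii $r_j, r_k$ are comparable. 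This lets us compare the two mean values $w_j^\alpha = \mean{w}_{\rho_j}$ and $w_k^\alpha = \mean{w}_{\rho_k}$ through their common region of overlap.

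Concretely, for $j, k$ with $j \in A_k$ and both $\tfrac34 Q_j^\alpha, \tfrac34 Q_k^\alpha \subset J \times \Omega$, I would write
\begin{align*}
  \abs{w_j^\alpha - w_k^\alpha}
  &\leq \dashint_{\frac34 Q_j^\alpha \cap \frac34 Q_k^\alpha}
    \abs{w - w_j^\alpha} \dz
    + \dashint_{\frac34 Q_j^\alpha \cap \frac34 Q_k^\alpha}
    \abs{w - w_k^\alpha} \dz
  \\
  &\leq c \dashint_{\frac34 Q_j^\alpha} \abs{w - w_j^\alpha} \dz
    + c \dashint_{\frac34 Q_k^\alpha} \abs{w - w_k^\alpha} \dz,
\end{align*}
where in the last step I used~\ref{itm:whit_fat34} to replace the integral over the overlap by the integral over the full cylinder at the cost of a constant, and that $w_j^\alpha, w_k^\alpha$ are constants (so subtracting them commutes with averaging). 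Dividing by $r_j$ and using $r_j \sim r_k$, this gives
\begin{align*}
  \frac{\abs{w_j^\alpha - w_k^\alpha}}{r_j}
  &\leq c \dashint_{\frac34 Q_j^\alpha} \frac{\abs{w - w_j^\alpha}}{r_j} \dz
    + c \dashint_{\frac34 Q_k^\alpha} \frac{\abs{w - w_k^\alpha}}{r_k} \dz.
\end{align*}
Summing over $j \in A_k$ and using $\# A_k \leq 120^{m+2}$ (property~\ref{itm:whit_sum}) to control the second term — which produces at most $\# A_k$ copies of a single cylinder average — yields the first inequality in the statement. When one of the cylinders is not contained in $J \times \Omega$, the corresponding $w_\ell^\alpha$ vanishes and one argues with $\tfrac45 Q_\ell^\alpha$ in place of $\tfrac34 Q_\ell^\alpha$, using the matching parts of Lemma~\ref{lem:est_w-uj}; the comparability of radii and overlaps persists since the enlargement factors are all comparable.

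For the final inequality, I would simply apply Lemma~\ref{lem:w-w_j_lambda}, which bounds $\dashint_{\frac34 Q_j^\alpha} \abs{w - w_j^\alpha}/r_j \, \dz \leq c\lambda$ for each $j$, and then the sum over the finitely many indices $j \in A_k$ (again at most $120^{m+2}$ of them) is bounded by $c\lambda$. The main obstacle is purely bookkeeping: keeping track of which enlargement ($\tfrac34 Q$ vs.\ $\tfrac45 Q$) is the right one in the boundary cases and checking that the overlap estimate~\ref{itm:whit_fat34} survives at the correct scale; there is no analytic difficulty here beyond the Poincaré inequality already absorbed into Lemma~\ref{lem:w-w_j_lambda}.
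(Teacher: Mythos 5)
Your proof is correct and follows essentially the same route as the paper's: compare $w_j^\alpha$ and $w_k^\alpha$ through the overlap $\tfrac34 Q_j^\alpha \cap \tfrac34 Q_k^\alpha$ using~\ref{itm:whit_fat34} and~\ref{itm:whit_radii}, split by the triangle inequality, expand back to full cylinders (with the term in $w_k^\alpha$ absorbed into the sum since $k \in A_k$), and finish with Lemma~\ref{lem:w-w_j_lambda}. The extra discussion of boundary cases ($w_\ell^\alpha = 0$) is sound but superfluous, since Lemma~\ref{lem:w-w_j_lambda} already delivers the $c\lambda$ bound uniformly across those cases.
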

\begin{proof}
  Due to~\ref{itm:whit_fat34} and~\ref{itm:whit_radii} for every $j
  \in A_k$ holds $\abs{\frac 34 Q_j^{\alpha} \cap \frac 34 Q_k^{\alpha}} \geq   \max
  \set{\abs{Q_j^{\alpha}}, \abs{Q_k^{\alpha}}}$ and $r_j \geq \frac 12 r_k$. Thus we can
  estimate
  \begin{align*}
    \sum_{j \in A_k} \frac{\abs{w_j^{\alpha}-w_k^{\alpha}}}{r_j} &\leq \sum_{j \in A_k}
    \dashint_{Q_j^{\alpha} \cap Q_k^{\alpha}} \frac{\abs{w_j^{\alpha}-w_k^{\alpha}}}{r_j} \dz
    \\
    &\leq c\, \dashint_{Q_j^{\alpha} \cap Q_k^{\alpha}} \frac{\abs{w-w_k^{\alpha}}}{r_k} \dz + c
    \sum_{j \in A_k} \dashint_{Q_j^{\alpha} \cap Q_k^{\alpha}} \frac{\abs{w-w_j^{\alpha}}}{r_j}
    \dz
    \\
    &\leq \sum_{j \in A_k} \dashint_{Q_j^{\alpha}} \frac{\abs{w-w_j^{\alpha}}}{r_k} \dz,
  \end{align*}
  where we also used $k \in A_k$. The rest follows by
  Lemma~\ref{lem:w-w_j_lambda}. 
\end{proof}
We need the following geometric alternatives.
\begin{lemma}
  \label{lem:alternatives}
  Let $Q$ be an $\alpha$-parabolic cylinder with radius~$r$. Then at
  least one of the following alternatives holds.
  \begin{enumerate}[label={(A\arabic{*})}]
  \item \label{itm:alt_small} There exists $k \in \setN$ such that $Q \cap
    \frac 12 Q_k^{\alpha} \neq \emptyset$, $8r \leq r_k$ and $Q \subset \frac
    34 Q_k^{\alpha}$.
  \item \label{itm:alt_large} For all $j \in\setN$ with $Q \cap \frac
    34 Q_j^{\alpha} \neq \emptyset$, there holds $r_j \leq 16 r$ and $\abs{Q_j^{\alpha}}
    \leq 8^{m+2} \abs{Q_j^{\alpha} \cap Q}$. Moreover, $137 Q \cap
    (\setR^{m+1}  \setminus \Oal) \neq \emptyset$.
  \end{enumerate}
\end{lemma}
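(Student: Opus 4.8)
\emph{Plan.} The engine of the proof is that $d_\alpha$ is a genuine metric: $\abs{x-y}$ is one, and $\alpha^{-1/2}\abs{t-\tau}^{1/2}$ is one because $(a+b)^{1/2}\le a^{1/2}+b^{1/2}$, and a maximum of two metrics is a metric. Thus the $\alpha$-parabolic cylinders are honest $d_\alpha$-balls, and almost everything below is a triangle inequality. Write $z_Q$ for the centre of $Q$ and $z_j$ for the centre of $Q_j^{\alpha}$. I would split into two cases according to whether a \emph{large} Whitney cylinder already meets the half-cylinder near $Q$. \textbf{Case 1:} there is a $k$ with $Q\cap\tfrac12 Q_k^{\alpha}\neq\emptyset$ and $8r\le r_k$. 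Then I claim \ref{itm:alt_small} holds for this $k$; only $Q\subset\tfrac34 Q_k^{\alpha}$ needs checking. Picking $z_0\in Q\cap\tfrac12 Q_k^{\alpha}$, for every $z\in Q$ the triangle inequality gives
\[
 d_\alpha(z,z_k)\le d_\alpha(z,z_Q)+d_\alpha(z_Q,z_0)+d_\alpha(z_0,z_k)<r+r+\tfrac12 r_k\le\tfrac14 r_k+\tfrac12 r_k=\tfrac34 r_k ,
\]
using $8r\le r_k$; hence $z\in\tfrac34 Q_k^{\alpha}$.

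\textbf{Case 2:} no such $k$ exists, i.e.\ whenever $Q\cap\tfrac12 Q_k^{\alpha}\neq\emptyset$ then $r_k<8r$; I claim \ref{itm:alt_large} holds. If $Q\cap\Oal=\emptyset$ everything is trivial, so assume $Q$ meets $\Oal$. Fix $j$ with $Q\cap\tfrac34 Q_j^{\alpha}\neq\emptyset$ and pick $z_1$ in that intersection. By \ref{itm:whit2} and \ref{itm:whit1}, $z_1\in\tfrac34 Q_j^{\alpha}\subset 8Q_j^{\alpha}\subset\Oal=\bigcup_i\tfrac12 Q_i^{\alpha}$, so $z_1\in\tfrac12 Q_i^{\alpha}$ for some $i$; then $z_1\in Q\cap\tfrac12 Q_i^{\alpha}$ forces $r_i<8r$ by the Case~2 hypothesis, and $z_1\in Q_i^{\alpha}\cap Q_j^{\alpha}$ gives $r_j\le 2r_i<16r$ by \ref{itm:whit3}. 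That is the first half of \ref{itm:alt_large}. For the volume bound, $z_1\in\tfrac34 Q_j^{\alpha}$ lies at $d_\alpha$-distance $\ge\tfrac14 r_j$ from $\partial Q_j^{\alpha}$, so $Q_{r_j/4}^{\alpha}(z_1)\subset Q_j^{\alpha}$; since $z_1\in Q$ and $Q$ is a convex cylinder, a fixed fraction of $Q_{r_j/4}^{\alpha}(z_1)$ lies in $Q$, and because $\abs{Q_\rho^{\alpha}}$ scales like $\rho^{m+2}$ together with $r_j<16r$ this yields $\abs{Q_j^{\alpha}\cap Q}$ comparable to $\abs{Q_j^{\alpha}}$, which chasing the constants gives as $\abs{Q_j^{\alpha}}\le 8^{m+2}\abs{Q_j^{\alpha}\cap Q}$. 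Finally, pick $z_0\in Q\cap\Oal$; by \ref{itm:whit1}, $z_0\in\tfrac12 Q_k^{\alpha}$ with $r_k<8r$, and by \ref{itm:whit2} there is $\hat z\in 16 Q_k^{\alpha}$ with $\hat z\notin\Oal$, so
\[
 d_\alpha(\hat z,z_Q)\le d_\alpha(\hat z,z_k)+d_\alpha(z_k,z_0)+d_\alpha(z_0,z_Q)<16r_k+\tfrac12 r_k+r<133\,r ,
\]
whence $\hat z\in 137Q$ and $137Q\cap(\setR^{m+1}\setminus\Oal)\neq\emptyset$. This establishes \ref{itm:alt_large}.

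The routine parts are the triangle inequalities; the one step that carries real content is the volume comparison $\abs{Q_j^{\alpha}}\le 8^{m+2}\abs{Q_j^{\alpha}\cap Q}$. The obstacle is that the $\alpha$-parabolic cylinders are not round — their temporal extent is proportional to the \emph{square} of the radius — so the Euclidean heuristic ``two overlapping balls share a smaller ball'' cannot be quoted directly; instead one has to run it in the spatial ball $B_r$ and in the quadratically scaled time interval $I_{\alpha r^2}$ separately, keeping the two scalings in step, and it is precisely there that the exponent $m+2$ and the constant $8^{m+2}$ get pinned down.
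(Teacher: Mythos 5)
Your proof is correct and follows essentially the same route as the paper: a case split on whether some Whitney cylinder with $8r\le r_k$ meets $\tfrac12 Q_k^\alpha$, triangle inequalities in the metric $d_\alpha$, and the Whitney properties \ref{itm:whit1}--\ref{itm:whit3} for the radius and volume comparisons. A small merit of your version is that in Case~1 you actually verify the stated containment $Q\subset\tfrac34 Q_k^\alpha$ (the paper's own proof only records $Q\subset Q_k^\alpha$), and your direct triangle-inequality for $137Q$ replaces the paper's inclusion $16Q_m\subset 137Q$ with the same numerics; your volume estimate is a bit more informal than the paper's explicit dichotomy $r\le r_j/8$ versus $r\ge r_j/8$, but it carries the same content.
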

\begin{proof}
  If there exists $k \in \setN$ such that $Q \cap \frac 12 Q_k^{\alpha} \neq
  \emptyset$ and $8r \leq r_k$, then automatically $Q \subset Q_k^{\alpha}$.
  Assume now that such an~$k$ does not exist. Then for every $l \in
  \setN$ with $Q \cap \frac 12 Q_l \neq \emptyset$, there holds $r_l
  \leq 8r$. Suppose that $Q \cap \frac 34 Q_j^{\alpha} \neq \emptyset$. Now let
  $x \in Q \cap \frac 34 Q_j^{\alpha}$, then by~\ref{itm:whit1} there exists
  $m$ such that $x \in \frac 12 Q_m$. In particular, we have $Q \cap
  \frac 34 Q_j^{\alpha} \neq \emptyset$ and $\frac 12 Q_m \cap \frac 34 Q_j^{\alpha}
  \neq \emptyset$, since both sets contain~$x$. Now, our assumption
  and $Q \cap \frac 34 Q_j^{\alpha} \neq \emptyset$ implies $r_m \leq 8r$. On
  the other hand $\frac 12 Q_m \cap \frac 34 Q_j^{\alpha} \neq \emptyset$
  and~\ref{itm:whit3} imply $r_j \leq 2 r_m$. Thus, $r_j \leq 16 r$.
  Moreover, it follows from $8r \geq r_m$ that $137 Q = (1+17\cdot 8)Q
  \supset 16 Q_m$.  Since $16 Q_m \cap (\setR^{m+1} \setminus \Oal)
  \neq \emptyset$, we also get $137 Q \cap (\setR^{m+1} \setminus
  \Oal) \neq \emptyset$. Now, let $z_0 \in Q \cap \frac 34 Q_j^{\alpha}$. It
  remains to prove $\abs{Q_j^{\alpha}} \leq 8^{m+2} \abs{Q_j^{\alpha} \cap Q}$. If $r
  \leq \frac 18 r_j$, then $Q \subset Q_j^{\alpha}$ and the claim follows. If
  $r \geq \frac 18 r_j$, then there exists an $\alpha$-parabolic
  cylinder $Q'$ with radius $\frac 18 r_j$ such that $Q' \subset Q_j^{\alpha}
  \cap Q$.  So in this case $\abs{Q_j^{\alpha} \cap Q} \geq \abs{Q'} \geq
  8^{-m-2} \abs{Q_j^{\alpha}}$.
\end{proof}
\begin{lemma}
  \label{lem:lip-nablaw}
  There holds
  \begin{align*}
    \Ma(\nabla \wal) &\leq
    c\,\lambda.
  \end{align*}
\end{lemma}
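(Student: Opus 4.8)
The plan is to estimate $\mathcal{M}_Q(\nabla \wal)$ for an arbitrary $\alpha$-parabolic cylinder $Q$ with radius $r$, and to distinguish whether $Q$ meets the bad set in a substantial way. If $Q \cap \Oal = \emptyset$, then $\wal = w$ on $Q$ and the bound $\dashint_Q |\nabla w| \le c\,\lambda$ follows directly from the fact that $16 Q_j^\alpha$ touches the complement of $\Oal$ for the relevant cylinders, or more simply from $\mathcal{M}^\alpha(\nabla w) \le \lambda$ off the bad set combined with continuity of the maximal function on cylinders slightly larger than $Q$; in any case this case is immediate. So assume $Q \cap \Oal \ne \emptyset$ and invoke Lemma~\ref{lem:alternatives}.

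In alternative~\ref{itm:alt_small}, $Q$ is contained in a single $\frac 34 Q_k^\alpha$ with $8r \le r_k$. Then on $Q$ only finitely many $\rho_j$ with $j \in A_k$ are active, $\sum_{j \in A_k}\rho_j = 1$, and we may write $\nabla \wal = \nabla w - \sum_{j \in A_k} \nabla\rho_j (w - w_j^\alpha) = -\sum_{j \in A_k} \nabla \rho_j\big((w - w_k^\alpha) - (w_j^\alpha - w_k^\alpha)\big)$ after subtracting the constant $w_k^\alpha$, using $\sum_{j}\nabla\rho_j = 0$ locally. Hence $|\nabla \wal| \le c \sum_{j \in A_k} r_j^{-1}(|w - w_k^\alpha| + |w_j^\alpha - w_k^\alpha|)$ on $Q$ by \ref{itm:P3}. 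Averaging over $Q$ and using that $r_j \sim r_k \gtrsim r$, that $Q \subset \frac 34 Q_k^\alpha \subset \frac 34 Q_j^\alpha$ for $j \in A_k$ with comparable measures, we bound $\dashint_Q r_k^{-1}|w - w_k^\alpha| \le c\,\dashint_{\frac 34 Q_k^\alpha} r_k^{-1}|w - w_k^\alpha| \le c\,\lambda$ by Lemma~\ref{lem:w-w_j_lambda}, and $\dashint_Q r_j^{-1}|w_j^\alpha - w_k^\alpha| \le \sum_{j \in A_k} r_j^{-1}|w_j^\alpha - w_k^\alpha| \le c\,\lambda$ by Lemma~\ref{lem:diff_uj_uk}. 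Since $\#A_k \le 120^{m+2}$, summing costs only a dimensional constant, giving $\mathcal{M}_Q(\nabla\wal) \le c\,\lambda$.

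In alternative~\ref{itm:alt_large}, every $\frac 34 Q_j^\alpha$ meeting $Q$ has $r_j \le 16 r$ and $|Q_j^\alpha| \le 8^{m+2}|Q_j^\alpha \cap Q|$, and $137 Q$ meets $(\setR^{m+1}\setminus\Oal)$, so $\mathcal{M}_{137Q}(\nabla w) \le c\,\lambda$ and thus $\mathcal{M}_Q(\nabla w) \le c\,\lambda$. Now split $Q = (Q \setminus \Oal) \cup (Q \cap \Oal)$; on the first piece $\nabla \wal = \nabla w$. On $Q \cap \Oal$ we use $\nabla(w - \wal) = \nabla w + \sum_j \nabla\rho_j(w - w_j^\alpha)$ from~\eqref{eq:nablawwal}, so
\begin{align*}
  \dashint_Q |\nabla\wal|\dz \le \dashint_Q|\nabla w|\dz + c\,|Q|^{-1}\sum_{j\,:\,\frac34 Q_j^\alpha \cap Q \ne \emptyset} \int_{\frac 34 Q_j^\alpha} \frac{|w - w_j^\alpha|}{r_j}\dz.
\end{align*}
For each such $j$, $\int_{\frac 34 Q_j^\alpha} r_j^{-1}|w - w_j^\alpha| \le c\,|Q_j^\alpha|\,\lambda \le c\,|Q_j^\alpha \cap Q|\,\lambda$ by Lemma~\ref{lem:w-w_j_lambda} and the measure comparison; summing over $j$ using the bounded overlap \ref{itm:whit4} of the $4Q_j^\alpha$ (equivalently, $\sum_j \chi_{\frac 34 Q_j^\alpha \cap Q} \le c$) yields $\le c\,|Q|\,\lambda$, hence $\mathcal{M}_Q(\nabla\wal) \le c\,\lambda$. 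Taking the supremum over all cylinders $Q$ containing a given point gives the claim.

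The main obstacle is alternative~\ref{itm:alt_small}: one must exploit the partition-of-unity cancellation $\sum \nabla\rho_j = 0$ to convert $\nabla\wal$ into differences of the averages $w_j^\alpha$ rather than $w$ itself, and then control both $\dashint_Q r^{-1}|w - w_k^\alpha|$ via Poincaré (Lemma~\ref{lem:w-w_j_lambda}) and the neighbor differences $|w_j^\alpha - w_k^\alpha|/r_j$ via Lemma~\ref{lem:diff_uj_uk}; keeping the geometry of the overlapping enlarged Whitney cylinders straight (radii comparability \ref{itm:whit_radii}, measure comparability \ref{itm:whit_fat34}) is where the care is needed.
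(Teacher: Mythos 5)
Your argument has the right shape --- the same two alternatives from Lemma~\ref{lem:alternatives}, with \ref{itm:alt_large} handled essentially as in the paper --- but alternative~\ref{itm:alt_small} contains a genuine gap. First, a small slip: on $\Oal$ one has $\sum_j\rho_j\equiv 1$, so the correct expansion is $\nabla\wal=-\sum_{j\in A_k}\nabla\rho_j\,(w-w_j^\alpha)$ on $Q$; the $\nabla w$ in your first displayed identity cancels and should not be there. More seriously, after inserting $w_k^\alpha$ you retain the term $\sum_{j\in A_k}\nabla\rho_j\,(w-w_k^\alpha)$ as an honest contribution $c\sum_j r_j^{-1}|w-w_k^\alpha|$, and then try to bound $\dashint_Q r_k^{-1}|w-w_k^\alpha|$ by $c\,\dashint_{\frac 34 Q_k^\alpha}r_k^{-1}|w-w_k^\alpha|$, citing ``comparable measures''. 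That step fails: in alternative~\ref{itm:alt_small} one only knows $8r\le r_k$ with no reverse inequality, so $|\tfrac 34 Q_k^\alpha|/|Q|\gtrsim (r_k/r)^{m+2}$ is unbounded, and for a nonnegative integrand the estimate $\dashint_Q f\le c\,\dashint_{\frac 34 Q_k^\alpha} f$ is false when $Q$ is a tiny cylinder deep inside $\tfrac 34 Q_k^\alpha$. (Also, $\tfrac 34 Q_k^\alpha\subset\tfrac 34 Q_j^\alpha$ for $j\in A_k$ is not generally true; one only has nonempty intersection and comparable radii.)

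The term you are trying to estimate in fact vanishes identically, and this is precisely the reason for subtracting the constant. Since $Q\subset\tfrac 34 Q_k^\alpha$, property~\ref{itm:P4} gives $\sum_{j\in A_k}\rho_j\equiv 1$ on $Q$ and therefore $\sum_{j\in A_k}\nabla\rho_j\equiv 0$ there, so $\sum_{j\in A_k}\nabla\rho_j\,(w-w_k^\alpha)=(w-w_k^\alpha)\sum_{j\in A_k}\nabla\rho_j=0$. What survives is exactly $\nabla\wal=\sum_{j\in A_k}\nabla\rho_j\,(w_j^\alpha-w_k^\alpha)$ pointwise on $Q$, which with \ref{itm:P3} and Lemma~\ref{lem:diff_uj_uk} yields the $L^\infty$ bound $|\nabla\wal|\le c\sum_{j\in A_k}r_j^{-1}|w_j^\alpha-w_k^\alpha|\le c\,\lambda$ on $Q$, hence $\mathcal{M}_Q(\nabla\wal)\le c\,\lambda$ without any appeal to Lemma~\ref{lem:w-w_j_lambda} in this case. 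Equivalently, as the paper does, note $\wal=\sum_{j\in A_k}\rho_j w_j^\alpha$ on $Q$ and compute $\nabla\wal=\nabla(\wal-w_k^\alpha)=\nabla\big(\sum_{j\in A_k}\rho_j(w_j^\alpha-w_k^\alpha)\big)$ directly. Once this cancellation is invoked, your proof of alternative~\ref{itm:alt_small} closes, and the rest of the argument is sound.
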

\begin{proof}
  Let $Q$ be an $\alpha$-parabolic cylinder with radius~$R$. We use
  the alternatives of Lemma~\ref{lem:alternatives}.

  We begin with alternative~\ref{itm:alt_small}.  In
  particular, there exists $k \in \setN$ such that $Q \cap \frac 12
  Q_k^{\alpha} \neq \emptyset$, $8 R \leq r_k$ and $Q \subset \frac 34
  Q_k^{\alpha}$.

\noindent  Then $w = \sum_{j \in A_k} \rho_j w_j^{\alpha}$ on $Q$ and therefore
  \begin{align*}
    \mathcal{M}_Q(\nabla \wal) &= \mathcal{M}_Q(\nabla
    (\wal-w_k^{\alpha})) = \mathcal{M}_Q \bigg( \nabla \Big(\sum_{j \in
      A_k} \rho_j(w_j^{\alpha} -w_k^{\alpha}) \Big)\bigg)
    \\
    &\leq \sum_{j \in A_k} \mathcal{M}_Q \Big( \nabla \big( \rho_j(w_j^{\alpha} -w_k^{\alpha})
    \big) \Big)
    \\
    &\leq c\, \sum_{j \in A_k} \frac{\abs{w_j^{\alpha} -w_k^{\alpha}}}{r_j}.
  \end{align*}
  Now, Lemma~\ref{lem:diff_uj_uk} implies $\mathcal{M}_Q(\nabla
  \wal) \leq c\,\lambda$.

\noindent  We turn to alternative~\ref{itm:alt_large}. In particular, for all
  $j \in\setN$ with $Q \cap \frac 34 Q_j^{\alpha} \neq \emptyset$, there holds
  $r_j \leq 16 r$ and $\abs{Q_j^{\alpha}} \leq 8^{m+2} \abs{Q_j^{\alpha} \cap Q}$.
  Moreover, $137 Q \cap (\setR^{m+1} \setminus \Oal) \neq \emptyset$.
  Using $\wal = w - \sum_j \rho_j (w-w_j^{\alpha})$ we estimate
  \begin{align*}
    \mathcal{M}_Q(\nabla \wal) &\leq \mathcal{M}_Q(\nabla w) +
    \sum_{j\,:\, Q \cap \frac 34 Q_j^{\alpha} \neq \emptyset}
    \mathcal{M}_Q(\nabla(\rho_j(w-w_j^{\alpha})).
    \\
    &\leq \mathcal{M}_Q(\nabla w) + c \!\!\!\sum_{j\,:\, Q \cap \frac
      34 Q_j^{\alpha} \neq \emptyset} \frac{\abs{Q_j^{\alpha}}}{\abs{Q}}
    \mathcal{M}_{\frac 34 Q_j^{\alpha}}(\nabla(\rho_j(w-w_j^{\alpha}))).
    \\
    &\leq \mathcal{M}_Q(\nabla w) + c \!\!\!\sum_{j\,:\, Q \cap \frac
      34 Q_j^{\alpha} \neq \emptyset} \frac{\abs{Q_j^{\alpha} \cap Q}}{\abs{Q}}
    \mathcal{M}_{\frac 34 Q_j^{\alpha}}(\nabla(\rho_j(w-w_j^{\alpha}))).
  \end{align*}
  Due to Lemma~\ref{lem:w-w_j_lambda} there holds
  \begin{align*}
    \mathcal{M}_{\frac 34 Q_j^{\alpha}}(\nabla(\rho_j(w-w_j^{\alpha}))) &\leq c
    \dashint_{\frac 34 Q_j^{\alpha}} \frac{\abs{w-w_j^{\alpha}}}{r_j}\dz +
    c\dashint_{\frac 34 Q_j^{\alpha}} \abs{\nabla w}\dz \leq c\,\lambda.
  \end{align*}
  On the other hand
  \begin{align*}
    \mathcal{M}_Q(\nabla w) \leq \mathcal{M}_{137Q}(\nabla w) \leq c\,
    \lambda,
  \end{align*}
  since $137 Q \cap (\setR^{m+1} \setminus \Oal) \neq \emptyset$. We
  summarize the above estimate to get
  \begin{align*}
    \mathcal{M}_Q(\nabla \wal) &\leq c\, \lambda + c\, \lambda \sum_{j\,:\, Q
      \cap \frac 34 Q_j^{\alpha} \neq \emptyset} \frac{\abs{Q_j^{\alpha} \cap
        Q}}{\abs{Q}}  \leq c\, \lambda,
  \end{align*}
  where we used that the $Q_j^{\alpha}$ are locally finite, see~\ref{itm:whit4}.
\end{proof}

\begin{lemma}
  \label{lem:lip-wt}
  There holds
  \begin{align*}
    \alpha\, \Na(\partial_t \wal) \leq c\,\lambda.
  \end{align*}
\end{lemma}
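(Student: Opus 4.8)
The plan is to bound $\mathcal{N}_Q(\partial_t\wal)$ for an arbitrary $\alpha$-parabolic cylinder $Q$ of radius $R$ and then pass to the supremum over all $Q$ containing a fixed point $x$, exactly as in the proof of Lemma~\ref{lem:lip-nablaw}. Fix $\xi\in\mathcal{F}_Q$, so that $\norm{\xi}_\infty\le1$ and $\norm{\partial_t\xi}_\infty\le(\alpha R^2)^{-1}$, and distinguish the two alternatives of Lemma~\ref{lem:alternatives}.

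In alternative~\ref{itm:alt_small} there is a Whitney cylinder $Q_k^\alpha$ with $8R\le r_k$ and $Q\subset\frac34 Q_k^\alpha$, so that $\wal=\sum_{j\in A_k}\rho_j w_j^\alpha$ on $Q$ by~\ref{itm:P4}. Integrating by parts in time and using $\sum_{j\in A_k}\partial_t\rho_j=\partial_t(\sum_{j\in A_k}\rho_j)=0$ on $\frac34 Q_k^\alpha\supset Q$ to subtract the constant $w_k^\alpha$, one finds
\begin{align*}
  \skp{\partial_t\wal}{\xi}=\sum_{j\in A_k}(w_j^\alpha-w_k^\alpha)\int_Q(\partial_t\rho_j)\,\xi\,dz.
\end{align*}
By the partition-of-unity bound~\ref{itm:P3} we have $\norm{\partial_t\rho_j}_\infty\le c(\alpha r_j^2)^{-1}$, hence $\abs{\skp{\partial_t\wal}{\xi}}\le c\,\alpha^{-1}\abs{Q}\sum_{j\in A_k}r_j^{-2}\abs{w_j^\alpha-w_k^\alpha}$. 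Multiplying by $\alpha R\abs{Q}^{-1}$ and writing $R\,r_j^{-2}=(R/r_j)\,r_j^{-1}\le\frac18\,r_j^{-1}$ (using $r_j\ge8R$), the bound $\alpha\mathcal{N}_Q(\partial_t\wal)\le c\lambda$ in this case reduces to $\sum_{j\in A_k}r_j^{-1}\abs{w_j^\alpha-w_k^\alpha}\le c\lambda$, which is exactly Lemma~\ref{lem:diff_uj_uk}.

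In alternative~\ref{itm:alt_large} we keep the representation $\wal=w-\sum_j\rho_j(w-w_j^\alpha)$ and split $\skp{\partial_t\wal}{\xi}=\skp{\partial_t w}{\xi}-\sum_j\skp{\partial_t(\rho_j(w-w_j^\alpha))}{\xi}$. The term $\skp{\partial_t w}{\xi}$ is treated as in Lemma~\ref{lem:w-w_j_lambda}: since $\xi\in\mathcal{F}_Q$ one has $\alpha R\abs{Q}^{-1}\abs{\skp{\partial_t w}{\xi}}\le\alpha\mathcal{N}_Q(\partial_t w)$, which by Remark~\ref{rem:NvsMdiv} (when $\frac45 Q\subset\setR\times\Omega$) or Lemma~\ref{lem:N_Qout} (otherwise) is bounded by $c\,\mathcal{M}_Q(\nabla w)+c\,\alpha\mathcal{M}_Q(G)\le c\,\mathcal{M}_{137Q}(\nabla w)+c\,\alpha\mathcal{M}_{137Q}(G)\le c\lambda$, the last step because $137Q\cap(\setR^{m+1}\setminus\Oal)\neq\emptyset$. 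In the remaining sum only indices with $\frac34 Q_j^\alpha\cap Q\neq\emptyset$ contribute, and for those $r_j\le16R$ and $\abs{Q_j^\alpha}\le8^{m+2}\abs{Q_j^\alpha\cap Q}$; using $\skp{\partial_t(\rho_j(w-w_j^\alpha))}{\xi}=-\skp{\rho_j(w-w_j^\alpha)}{\partial_t\xi}$, the bounds $\norm{\rho_j}_\infty\le c$ and $\norm{\partial_t\xi}_\infty\le(\alpha R^2)^{-1}$, Lemma~\ref{lem:w-w_j_lambda}, and the bounded overlap~\ref{itm:whit4}, we obtain
\begin{align*}
  \sum_j\bigabs{\skp{\partial_t(\rho_j(w-w_j^\alpha))}{\xi}}\le\frac{c}{\alpha R^2}\sum_j\int_{\frac34 Q_j^\alpha}\abs{w-w_j^\alpha}\,dz\le\frac{c\lambda}{\alpha R}\sum_j\abs{Q_j^\alpha\cap Q}\le\frac{c\lambda}{\alpha R}\abs{Q},
\end{align*}
so that $\alpha R\abs{Q}^{-1}$ times this sum is again $\le c\lambda$. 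Combining the two alternatives gives $\alpha\mathcal{N}_Q(\partial_t\wal)\le c\lambda$ for every $\alpha$-parabolic cylinder $Q$, and taking the supremum over $Q\ni x$ yields $\alpha\Na(\partial_t\wal)\le c\lambda$.

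I expect the main obstacle to be the estimate in alternative~\ref{itm:alt_small}: here the relevant Whitney cylinder $Q_k^\alpha$ may be arbitrarily larger than $Q$, so one must squeeze the decay $r_j^{-2}$ out of $\partial_t\rho_j$ (rather than use the weaker $(\alpha R^2)^{-1}$ coming from $\partial_t\xi$) in order to absorb the uncontrolled ratio $r_k/R$ against $\sum_{j\in A_k}r_j^{-1}\abs{w_j^\alpha-w_k^\alpha}\le c\lambda$. Once this is arranged, the rest is a routine combination of the Whitney covering and partition-of-unity properties with the Poincaré-type bounds already established in Lemmas~\ref{lem:N_Qout}, \ref{lem:w-w_j_lambda} and~\ref{lem:diff_uj_uk}.
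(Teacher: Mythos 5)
Your proof is correct and follows the same overall structure as the paper's: the preliminary reduction plus the two geometric alternatives of Lemma~\ref{lem:alternatives}. For alternative~\ref{itm:alt_small} your argument is essentially identical to the paper's (one tiny slip: for $j\in A_k$ one only gets $r_j\ge r_k/2\ge 4R$, not $r_j\ge8R$, but this does not affect anything).

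For alternative~\ref{itm:alt_large} you take a genuinely simpler route than the paper. You bound $\abs{\skp{\rho_j(w-w_j^\alpha)}{\partial_t\xi}}$ directly by $\norm{\partial_t\xi}_\infty\int_{\frac34 Q_j^\alpha}\abs{w-w_j^\alpha}\dz\le(\alpha R^2)^{-1}c\lambda r_j\abs{Q_j^\alpha}$ via Lemma~\ref{lem:w-w_j_lambda}, then absorb the factor $r_j/R\le16$ and use the fatness $\abs{Q_j^\alpha}\le 8^{m+2}\abs{Q_j^\alpha\cap Q}$ together with the bounded overlap to sum. The paper instead subtracts the constant $\xi_j=\mean{\xi}_{Q_j}$, writes $\rho_j\partial_t\xi=\partial_t\big(\rho_j(\xi-\xi_j)\big)-(\partial_t\rho_j)(\xi-\xi_j)$, and estimates the first piece by the localized $\mathcal{N}_{Q_j^\alpha}(\partial_t w)$ (after checking $\norm{\rho_j(\xi-\xi_j)}_{\mathcal{F}_{Q_j^\alpha}}\le c\,r_j/R$) and the second pointwise. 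Both arrive at the same bound $c\lambda\abs{Q_j^\alpha}/\abs{Q}$ per term. The reason the two routes are equivalent is that the content of the paper's zoom-in via $\mathcal{N}_{Q_j^\alpha}(\partial_t w)$ is already packaged inside Lemma~\ref{lem:w-w_j_lambda} (through the parabolic Poincar\'e inequality), so invoking that lemma directly lets you skip the $\xi-\xi_j$ decomposition entirely. Your argument is therefore a welcome streamlining; the paper's version, on the other hand, makes more visible which negative-space quantity $\mathcal{N}_{Q_j^\alpha}(\partial_t w)$ is really being used, which may be the reason for the authors' choice.
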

\begin{proof}
  Let $Q$ be an $\alpha$-parabolic cylinder with radius~$R$ and~$Q
  \subset \setR \times \Omega$. We have to show that $\alpha
  \mathcal{N}_Q (\partial_t \wal) \leq c\, \lambda$.
  If~$137 Q \not\subset \setR \times \Omega$, then the claim follows
  from Lemma~\ref{lem:N_Qout} and Lemma~\ref{lem:lip-nablaw}, so we
  can assume in the following $137 Q \subset \setR \times \Omega$.
  We use the alternatives of Lemma~\ref{lem:alternatives}.

  We begin with alternative~\ref{itm:alt_small}.  In particular, there
  exists $k \in \setN$ such that $Q \cap \frac 12 Q_k \neq \emptyset$,
  $8R \leq r_k$ and $Q \subset \frac 34 Q_k$.

  Then $\wal = \sum_{j \in A_k} \rho_j w^\alpha_j$ on $Q_k$ and
  therefore
  \begin{align*}
    \alpha \mathcal{N}_Q(\partial_t \wal) &= \alpha
    \mathcal{N}_Q(\partial_t (\wal-w^\alpha_k)) = \alpha \mathcal{N}_Q
    \bigg(
    \partial_t \Big(\sum_{j \in A_k} \rho_j(w^\alpha_j -w^\alpha_k) \Big)\bigg)
    \\
    &= \sum_{j \in A_k} \alpha \mathcal{N}_Q\big( \partial_t (\rho_j
    (w^\alpha_j-w^\alpha_k)) \big).
  \end{align*}
  We estimate
  \begin{align*}
    \alpha \mathcal{N}_Q\big( \partial_t (\rho_j (w^\alpha_j-w^\alpha_k)) \big) &=
    \alpha \sup_{\xi \in \mathcal{F}_Q} \Big( R\, \abs{Q}^{-1}
    \abs{\skp{\partial_t(\rho_j(w^\alpha_j-w^\alpha_k))}{\xi}} \Big)
    \\
    &\leq \alpha \sup_{\xi \in \mathcal{F}_Q} \Big( R\,
    \norm{\partial_t \rho_j}_\infty \abs{w^\alpha_j - w^\alpha_k} \norm{\xi}_\infty
    \Big)
    \\
    &\leq c\, \alpha R\, \abs{w^\alpha_j - w^\alpha_k} (\alpha r_j^2)^{-1}
    \\
    &\leq c\, \frac{\abs{w^\alpha_j - w^\alpha_k}}{r_j},
  \end{align*}
  where we used $8R \leq r_k \leq 2 r_j$ in the last step. This and
  Lemma~\ref{lem:diff_uj_uk} imply $\alpha \mathcal{N}_Q(\partial_t
  \wal) \leq c\, \lambda$.

  We turn to alternative~\ref{itm:alt_large}. In particular, for all
  $j \in\setN$ with $Q \cap \frac 34 Q_j \neq \emptyset$, there holds
  $r_j \leq 16 r$ and $\abs{Q_j} \leq 8^{d+2} \abs{Q_j \cap Q}$.
  Moreover, $137 Q \cap (\setR^{d+1} \setminus \Oal) \neq \emptyset$.
  Using $\wal = w - \sum_j \rho_j (w-w^\alpha_j)$ we estimate
  \begin{align*}
    \mathcal{N}_Q(\partial_t \wal) &\leq \mathcal{N}_Q(\partial_t
    w) + \sum_{j\,:\, Q \cap \frac 34 Q_j \neq \emptyset}
    \mathcal{N}_Q(\partial_t(\rho_j(w-w^\alpha_j)).
  \end{align*}
  Recall that $137Q \subset \setR \times \Omega$. So $137 Q \cap
  (\setR^{d+1} \setminus \Oal) \neq \emptyset$ implies
  $\alpha\mathcal{N}_Q(\partial_t w) \leq c\, \alpha \mathcal{M}_Q(G)
  \leq c\,\lambda$ using also Remark~\ref{rem:NvsMdiv}.
  On the other hand using $r_j \leq 16 R$, Lemma~\ref{lem:diff_uj_uk}
  and $\abs{Q_j} \leq 8^{d+2} \abs{Q_j \cap Q}$ we estimate 
  \begin{align*}
    \mathcal{N}_Q(\partial_t \wal) &\leq c\,\lambda + \sum_{j: Q \cap
      \frac 34 Q_j \neq \emptyset} \alpha\,
    \mathcal{N}_Q\big(\partial_t(\rho_j(w-w^\alpha_j)\big)
    \\
    &= c\, \lambda + \alpha \sum_{j: Q \cap \frac 34 Q_j \neq
      \emptyset} \sup_{\xi \in \mathcal{F}_Q} \Big( R\, \abs{Q}^{-1}
    \abs{\skp{\rho_j(w-w^\alpha_j)}{\partial_t \xi}} \Big).
  \end{align*}
  Now for $j$ with $Q \cap \frac 34 Q_j \neq \emptyset$ and  $\xi_j :=
  \mean{\xi}_{Q_j}$ we have
  \begin{align*}
    \lefteqn{\frac{\alpha R}{\abs{Q}}
      \abs{\skp{\rho_j(w-w^\alpha_j)}{\partial_t \xi}}} \qquad &
    \\
    &\leq \frac{\alpha R}{\abs{Q}} \abs{\skp{w-w^\alpha_j}{ \partial_t \big(
        \rho_j (\xi-\xi_j)\big)}} + \frac{\alpha R}{\abs{Q}}
    \abs{\skp{w-w^\alpha_j}{(\partial_t \rho_j)(\xi-\xi_j)}}
    \\
    &=: I + II.
  \end{align*}
  We will now estimate
  $\norm{\rho_j(\xi-\xi_j)}_{\mathcal{F}_{Q_j}}$. Using
  $\norm{\xi}_\infty + R \norm{\nabla \xi}_\infty + \alpha R^2
  \norm{\partial_t \xi}_\infty\leq 1$, we get by parabolic
  \Poincare{}'s inequality
  \begin{align*}
    \norm{\xi-\xi_i}_{L^\infty(Q_j)} &\leq c\, r_i \norm{\nabla
      \xi}_{L^\infty(Q_j)} + c\, \alpha r_i^2 \norm{\partial_t
      \xi}_{L^\infty(Q_j)} \leq c\, \frac{r_i}{R} + c\,
    \frac{r_i^2}{R^2} \leq c\, \frac{r_i}{R},
    \\
    \norm{\rho_j(\xi-\xi_j)}_\infty &\leq
    \norm{\xi-\xi_i}_{L^\infty(Q_j)} \leq c\, \frac{r_i}{R},
    \\
    \norm{\nabla (\rho_j(\xi-\xi_j))}_\infty &\leq
    \norm{\xi-\xi_i}_{L^\infty(Q_j)} + r_i \norm{\nabla
      \xi}_{L^\infty(Q_j)} \leq c\, \frac{r_i}{R},
    \\
    \norm{\partial_t(\rho_j(\xi-\xi_j))}_\infty &\leq
    \norm{\xi-\xi_i}_{L^\infty(Q_j)} + \alpha r_i^2 \norm{\partial_t
      \xi}_{L^\infty(Q_j)} \leq c\, \frac{r_i}{R} + c\,
    \frac{r_i^2}{R^2} \leq c\, \frac{r_i}{R}.
  \end{align*}
  In particular, $\norm{\rho_j(\xi-\xi_j)}_{\mathcal{F}_{Q_j}}\leq c
  \frac{r_i}{R}$. This and Lemma~\ref{lem:w-w_j_lambda} imply
  \begin{align*}
    I &= \frac{\alpha R}{\abs{Q}} \abs{\skp{w-w^\alpha_j}{ \partial_t
        \big( \rho_j (\xi-\xi_j)\big)}}
    \\
    &= \frac{\alpha R}{\abs{Q}} \abs{\skp{w}{ \partial_t \big( \rho_j
        (\xi-\xi_j)\big)}}
    \\
    &\leq c\, \frac{\alpha R}{\abs{Q}} \frac{\abs{Q_i}}{r_i}
    \,\mathcal{N}_{Q_j}(\partial_t w)
    \norm{\rho_j(\xi-\xi_j)}_{\mathcal{F}_{Q_j}}
    \\
    &\leq c\, \frac{\alpha R}{\abs{Q}} \frac{\abs{Q_i}}{r_i}
    \,\frac{\lambda}{\alpha} \frac{r_i}{R}
    \\
    &= c\, \lambda \frac{\abs{Q_i}}{\abs{Q}}.
  \end{align*}
  Moreover, also by Lemma~\ref{lem:w-w_j_lambda}
  \begin{align*}
    II &= \frac{\alpha R}{\abs{Q}} \abs{\skp{w-w^\alpha_j}{(\partial_t
        \rho_j)(\xi-\xi_j)}}
    \\
    &\leq \frac{\alpha R}{\abs{Q}} \abs{Q_j} \dashint_{Q_j} \abs{w -
      w^\alpha_j}\dz \frac{c}{\alpha r_j^2}
    \norm{\xi-\xi_j}_{L^\infty(Q_j)}
    \\
    &\leq \frac{\alpha R}{\abs{Q}} \abs{Q_j}\, r_j \lambda  \frac{c}{\alpha
      r_j^2} \frac{r_j}{R}
    \\
    &= c\, \frac{\abs{Q_j}}{\abs{Q}} \lambda.
  \end{align*}
  Summarized we have
  \begin{align*}
    \alpha\, \mathcal{N}_Q(\partial_t\wal) &\leq c\,\lambda + \!\!\!\sum_{j:
      Q \cap \frac 34 Q_j \neq \emptyset } \!\!c\,
    \frac{\abs{Q_j}}{\abs{Q}} \lambda &\leq c\,\lambda + \!\!\!\sum_{j: Q
      \cap \frac 34 Q_j \neq \emptyset } \!\!c\, \frac{\abs{Q_j \cap
        Q}}{\abs{Q}} \lambda \leq c\,\lambda,
  \end{align*}
  This proves the claim.
\end{proof}

\begin{lemma}
  There holds
  \begin{align*}
    \Mas(\wal) \leq c\,\lambda.
  \end{align*}
\end{lemma}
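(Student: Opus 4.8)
The plan is to read the estimate off directly from the parabolic Poincar\'e inequality of Theorem~\ref{thm:poincare_par} together with the two Lipschitz-type bounds already at our disposal: $\Ma(\nabla\wal)\le c\,\lambda$ from Lemma~\ref{lem:lip-nablaw} and $\alpha\,\Na(\partial_t\wal)\le c\,\lambda$ from Lemma~\ref{lem:lip-wt}. First I would record that $\wal\in L^1(J,W^{1,1}_0(\Omega))$ by Lemma~\ref{lem:stabL1} and that $\wal$ vanishes outside $(-t_0,t_0)\times\Omega$, so that $\wal$ and $\nabla\wal$ belong to $L^1(Q)$ for every $\alpha$-parabolic cylinder $Q$, and in particular the quantities $\Ms_Q(\wal)$ and $\mathcal{N}_Q(\partial_t\wal)$ are well defined.

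For the core step, fix an $\alpha$-parabolic cylinder $Q$ with radius $r$ and apply Theorem~\ref{thm:poincare_par} with $a=\wal$ and weight $\rho=\chi_Q$, which satisfies $\norm{\rho}_\infty\le\abs{Q}^{-1}\norm{\rho}_1$ (so one may take $c_0=1$) and for which $\mean{\wal}_\rho=\mean{\wal}_Q$. This gives
\begin{align*}
  \Ms_Q(\wal)=\dashint_Q\frac{\abs{\wal-\mean{\wal}_Q}}{r}\dz\le c\,\mathcal{M}_Q(\nabla\wal)+c\,\alpha\,\mathcal{N}_Q(\partial_t\wal).
\end{align*}
Choosing any point $x\in Q$, the definitions of $\Ma$ and $\Na$ give $\mathcal{M}_Q(\nabla\wal)\le(\Ma(\nabla\wal))(x)$ and $\mathcal{N}_Q(\partial_t\wal)\le(\Na(\partial_t\wal))(x)$, so Lemma~\ref{lem:lip-nablaw} and Lemma~\ref{lem:lip-wt} yield $\Ms_Q(\wal)\le c\,\lambda$ for every such $Q$. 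Taking the supremum over all $\alpha$-parabolic cylinders containing a fixed point then gives $\Mas(\wal)\le c\,\lambda$, which is the claim.

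The one point that needs care — and the mild obstacle here — is that Lemma~\ref{lem:lip-wt} is phrased for cylinders $Q\subset\setR\times\Omega$, so one still has to control $\mathcal{N}_Q(\partial_t\wal)$ for cylinders that stick out of $\setR\times\Omega$. I would treat these exactly as in Lemma~\ref{lem:N_Qout}: using that $\wal$ vanishes outside $(-t_0,t_0)\times\Omega$ and that $\Omega$ has the fat complement property, the spatial ball of such a cylinder meets $\Omega^\complement$ in a set of comparable measure, whence the spatial Poincar\'e inequality combined with Lemma~\ref{lem:lip-nablaw} gives $\alpha\,\mathcal{N}_Q(\partial_t\wal)\le c\,\mathcal{M}_Q(\nabla\wal)\le c\,\lambda$ for these cylinders as well. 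Apart from this bookkeeping, the substance of the statement is already entirely contained in Lemma~\ref{lem:lip-nablaw} and Lemma~\ref{lem:lip-wt}.
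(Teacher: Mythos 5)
Your proof is correct and follows exactly the paper's argument: combine the parabolic Poincar\'e inequality of Theorem~\ref{thm:poincare_par} with the bounds $\Ma(\nabla\wal)\le c\,\lambda$ and $\alpha\,\Na(\partial_t\wal)\le c\,\lambda$ from Lemma~\ref{lem:lip-nablaw} and Lemma~\ref{lem:lip-wt}. Your additional paragraph handling cylinders sticking out of $\setR\times\Omega$ via Lemma~\ref{lem:N_Qout} is a worthwhile piece of bookkeeping that the paper's one-line proof leaves implicit, but it is the same underlying mechanism.
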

\begin{proof}
  Due to Theorem~\ref{thm:poincare_par}, Lemma~\ref{lem:lip-nablaw}
  and Lemma~\ref{lem:lip-wt} we have 
  \begin{align*}
    M^{\sharp,1}_Q(w^\alpha_\lambda) &\leq c\, M_Q(\nabla
    w^\alpha_\lambda) + c\, \alpha\, \mathcal{N}_Q(\partial_t
    w^\alpha_\lambda) \leq c\, \lambda. 
  \end{align*}
  for every~$\alpha$-parabolic cylinder~$Q$.
\end{proof}

\begin{corollary}
  \label{cor:hoelder_ul}
  $\wal$ is Lipschitz continuous with respect to $d^\alpha$, i.e.
  \begin{align*}
    \abs{\wal(t,x) - \wal(s,y)} \leq c\, \lambda \max
    \biggset{ \frac{\abs{t-s}^\frac 12}{\alpha^{\frac 12}}, \abs{x-y}
    }
  \end{align*}
\end{corollary}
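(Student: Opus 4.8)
The plan is to deduce the claimed pointwise Lipschitz estimate from the sharp--maximal bound of the preceding lemma, namely $\Ms_Q(\wal)=\dashint_Q\abs{\wal-\mean{\wal}_Q}/r_Q\dz\le c\,\lambda$ for every $\alpha$-parabolic cylinder $Q$, by the classical Campanato chaining argument adapted to the scaled parabolic metric $d_\alpha$. Note first that $\wal\in L^1_{\mathrm{loc}}(\setR^{m+1})$ by Lemma~\ref{lem:stabL1} (extended by zero), so the Lebesgue differentiation theorem applies at almost every point, and it suffices to establish the inequality for almost every pair $(t,x),(s,y)$; the full statement then follows because the right--hand side is continuous.

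First I would telescope at a single Lebesgue point $z_0$. For $R>0$ set $Q_k:=Q^\alpha_{2^{-k}R}(z_0)$, so that $Q_{k+1}\subset Q_k$ with $\abs{Q_k}/\abs{Q_{k+1}}=2^{m+2}$ and $r_{Q_k}=2^{-k}R$. Since $z_0\in Q_k$, the sharp--maximal bound gives
\begin{align*}
  \abs{\mean{\wal}_{Q_{k+1}}-\mean{\wal}_{Q_k}}
  \le \frac{\abs{Q_k}}{\abs{Q_{k+1}}}\dashint_{Q_k}\abs{\wal-\mean{\wal}_{Q_k}}\dz
  = 2^{m+2}\,r_{Q_k}\,\Ms_{Q_k}(\wal)\le c\,2^{-k}R\,\lambda .
\end{align*}
Summing the geometric series in $k$ and using $\mean{\wal}_{Q_k}\to\wal(z_0)$ yields $\abs{\wal(z_0)-\mean{\wal}_{Q^\alpha_R(z_0)}}\le c\,\lambda R$ at every Lebesgue point $z_0$.

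Then, for two Lebesgue points $z_1=(t,x)$ and $z_2=(s,y)$, I would put $R:=d_\alpha(z_1,z_2)=\max\bigset{\alpha^{-1/2}\abs{t-s}^{1/2},\abs{x-y}}$ and fix an $\alpha$-parabolic cylinder $\widetilde Q$ of radius at most $cR$ that contains both $z_1$ and $z_2$, with $\abs{\widetilde Q}$ comparable to $\abs{Q^\alpha_{cR}(z_i)}$. Comparing $\mean{\wal}_{Q^\alpha_{cR}(z_i)}$ with $\mean{\wal}_{\widetilde Q}$ exactly as in the telescoping step above, and combining with the one--point estimate applied at $z_1$ and at $z_2$, the triangle inequality produces
\begin{align*}
  \abs{\wal(z_1)-\wal(z_2)}\le c\,\lambda R
  = c\,\lambda \max\biggset{\frac{\abs{t-s}^{1/2}}{\alpha^{1/2}},\abs{x-y}}
\end{align*}
for almost every $z_1,z_2$, and hence for all of them after passing to the continuous representative.

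The hard part is nothing more than bookkeeping the parabolic scaling: one must check that two points at $d_\alpha$-distance $R$ do lie inside a single $\alpha$-parabolic cylinder of radius $\sim R$ with all the relevant measure ratios controlled by dimensional constants, so that every telescoping step of both chains, as well as the two comparisons with $\mean{\wal}_{\widetilde Q}$, is uniformly dominated by the geometric factor $c\,2^{-k}R\,\lambda$. Beyond this there is no real obstacle, the argument being the standard Campanato embedding.
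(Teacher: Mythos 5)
Your proposal is correct and takes essentially the same approach as the paper: the key input is the sharp--maximal bound $\Mas(\wal)\le c\,\lambda$ from the preceding lemma, and the Lipschitz estimate in the metric $d_\alpha$ follows by the Campanato embedding adapted to the parabolic scaling. The only difference is presentational: the paper disposes of the embedding in one line by citing \cite{DaP65}, whereas you carry out the telescoping/chaining argument (with the dyadic cylinders $Q_k=Q^\alpha_{2^{-k}R}(z_0)$ and the common cylinder $\widetilde Q$) by hand.
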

\begin{proof}
  It follows from $\Mas(\wal) \leq c\,\lambda$ and~\cite{DaP65} that
  $\wal$ is Lipschitz continuous with respect to $d^\alpha$.
\end{proof}
\begin{lemma} 
  \label{lem:integrationsbyparts}
  Let $J=(t^-,t^+)$. For all $\eta\in W^{1,\infty}_0(-\infty,t^+) $ the expression $\skp{\partial_t
    w}{\wal \eta}$ is well defined and can be calculated as
  \begin{align}
    \label{eq:intbyparts}
    \skp{\partial_t w}{\wal \eta} &= \frac12 \int_Q (|\wal |^2
    -2w\cdot \wal) \partial_t \eta\, dz+\int_{\Oal} (\partial_t \wal)
    (\wal-w)\eta\, dz.
  \end{align}
\end{lemma}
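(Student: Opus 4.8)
The plan is to reduce the assertion to a single ``renormalized'' energy identity and then to prove that identity by time-mollification. First I record the structure and check that both sides make sense. Recall $\wal = w - v$, $v := \sum_j \rho_j(w - w_j^\alpha)$; by \eqref{eq:supp_bad_parts} and~\ref{itm:whit2}, $\support v \subset \bigcup_j \tfrac34 Q_j^\alpha$, each $\overline{\tfrac34 Q_j^\alpha}$ is a compact subset of $8 Q_j^\alpha \subset \Oal$, on the open set $\Oal$ the function $\wal = \sum_j \rho_j w_j^\alpha$ is $C^\infty$, and $\wal = w$ on $(\Oal)^c$. Since $\Ma(\nabla\wal) \le c\lambda$ (Lemma~\ref{lem:lip-nablaw}), $\wal$ is spatially Lipschitz with bounded gradient, vanishes outside $(-t_0,t_0)\times\Omega$, and --- as $\eta(t^+) = 0$ and $w \equiv 0$ for $t \le -t_0$ --- $\wal\eta$ is an admissible test function for $\partial_t w = \Div G$, so $\skp{\partial_t w}{\wal\eta} = -\int_Q G\cdot(\nabla\wal)\,\eta\,\dz$ is well defined. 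On the right, $\abs{\partial_t\rho_j} \le c(\alpha r_j^2)^{-1}$ together with Lemmas~\ref{lem:diff_uj_uk} and~\ref{lem:w-w_j_lambda} gives $\abs{\partial_t\wal} \le c\lambda(\alpha r_j)^{-1}$ on $\tfrac34 Q_j^\alpha$ and $\int_{\tfrac34 Q_j^\alpha}\abs{\partial_t\wal}\,\abs{\wal-w}\,\dz \le c\lambda^2\alpha^{-1}\abs{Q_j^\alpha}$, so $\int_\Oal(\partial_t\wal)(\wal-w)\eta$, and likewise $\int_Q\abs{\wal}^2\partial_t\eta$ and $\int_Q(w\cdot\wal)\partial_t\eta$, are finite. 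Both sides are weak-$*$ sequentially continuous in $\eta\in W^{1,\infty}_0(-\infty,t^+)$, so it suffices to take $\eta\in C^\infty_0(-\infty,t^+)$.

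Next I reduce the claim algebraically. From $w = \wal + v$ one gets $\tfrac12(\abs{\wal}^2 - 2w\cdot\wal) = -\tfrac12\abs{\wal}^2 - v\cdot\wal$, and since $v\eta$ is supported where $\wal$ is $C^\infty$ we have, with no ambiguity, $\skp{\partial_t\wal}{v\eta} = \int_\Oal(\partial_t\wal)\,v\,\eta\,\dz = -\int_\Oal(\partial_t\wal)(\wal-w)\eta\,\dz$. Hence the assertion is equivalent to
\begin{align*}
  \skp{\partial_t w}{\wal\eta} + \skp{\partial_t\wal}{v\eta}
  = -\int_Q\Big(\tfrac12\abs{\wal}^2 + v\cdot\wal\Big)\partial_t\eta\,\dz .
\end{align*}
Formally this is just the product rule $\skp{\partial_t(w\cdot\wal)}{\eta} = \skp{\partial_t w}{\wal\eta} + \skp{\partial_t\wal}{w\eta}$ combined with $\tfrac12\skp{\partial_t\abs{\wal}^2}{\eta} = \skp{\partial_t\wal}{\wal\eta}$ and $v = w-\wal$; the subtlety is that $\skp{\partial_t\wal}{w\eta}$ and $\skp{\partial_t\wal}{\wal\eta}$ are not individually meaningful (their test functions are not supported in $\Oal$, and $\int\abs{w}^2$ need not be finite), but their difference $\skp{\partial_t\wal}{v\eta}$ and the displayed combination are.

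To make this rigorous I go through the time-regularized problem. Extending $w,G$ as in \S\ref{ssec:extension} and putting $w_h := w *_t \psi_h$, $G_h := G *_t \psi_h$ with a standard time-mollifier $\psi_h$, one has $w_h\in C^\infty(\setR;W^{1,1}_0(\Omega))$, $\partial_t w_h = \Div G_h$, $w_h\to w$ and $G_h\to G$ in $L^1$. Since $\skp{\partial_t w_h}{\wal\eta} = -\int G_h\cdot(\nabla\wal)\eta \to \skp{\partial_t w}{\wal\eta}$, it is enough to prove the identity at the level of $w_h$ --- where $\partial_t w_h$ is an honest function and everything is classically differentiable in $t$ on the relevant sets --- and then let $h\to0$, using the bounds above and taking limits only in the combination $\tfrac12\abs{\wal}^2 + v\cdot\wal$ (never in $\abs{w}^2$ or $\abs{v}^2$ alone). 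At the regularized level one splits the time integral over the open fibres $\set{t:(t,x)\in\Oal}$, integrates by parts there (where $\wal$ is smooth), uses $\partial_t w_h = \Div G_h$ off $\Oal$, and checks that the endpoint contributions cancel by continuity of $\wal$ and $v\equiv0$ on $(\Oal)^c$.

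I expect this last point to be the main obstacle. Since $w$ has no time regularity beyond $\partial_t w = \Div G$ and $\wal$ is only H\"older-$\tfrac12$ in time across $(\Oal)^c$, the naive integration by parts $\skp{\partial_t w}{\wal\eta} = -\int w\,\partial_t(\wal\eta)$ is illegitimate and the energies $\int\abs{w}^2,\int\abs{v}^2$ may be infinite; what rescues the argument is that the entire correction $\wal - w = -v$ lives in the \emph{open} set $\Oal$, on which $\wal$ is genuinely $C^\infty$, so the divergent pieces cancel identically and only the renormalized integrand survives. The mollification is then the routine (if somewhat tedious) device needed to legitimize the differentiation under the integral sign and the fibrewise integration by parts.
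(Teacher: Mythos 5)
Your reduction to the identity $\skp{\partial_t w}{\wal\eta} + \skp{\partial_t\wal}{v\eta} = -\int_Q\big(\tfrac12\abs{\wal}^2 + v\cdot\wal\big)\,\partial_t\eta\,\dz$ (with $v := w - \wal$) is a correct rewriting of \eqref{eq:intbyparts}, and your observations on well-definedness --- in particular $\skp{\partial_t w}{\wal\eta} = -\int_Q G\cdot(\nabla\wal)\,\eta\,\dz$ and the fact that $\supp v$ lies in the open set $\Oal$ where $\wal$ is locally smooth --- are sound. However, the decisive step is deferred to a regularization argument that is only sketched, and as described it does not go through. Mollifying $w$ alone and then ``splitting the time integral over the open fibres of $\Oal$'' leaves, on the complementary fibres where $\wal=w$, the term $\int_{Q\setminus\Oal}(\partial_t w_h)\,w\,\eta\,\dz$: there $w$ has no time regularity to integrate by parts against, and under the standing hypothesis $w\in L^1(J,W^{1,1}_0(\Omega))$ the product $(\partial_t w_h)\cdot w$ need not even be integrable. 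Likewise $\abs{w_h}^2$ need not be in $L^1$, so the quadratic cancellation $\int(\partial_t w_h)\,w_h\,\eta=-\tfrac12\int\abs{w_h}^2\,\partial_t\eta$ is not available. You flag this danger yourself, but the plan as stated still has to pass through precisely these divergent intermediate quantities.

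The paper's proof avoids the obstruction by applying Steklov averages to \emph{both} $w$ and $\wal$ and using the adjoint identity $\skp{\partial_t w}{((\wal)_h\eta)_{-h}} = -\int_Q w_h\cdot\partial_t\big((\wal)_h\eta\big)\,\dz$. One then writes $-w_h = (\wal-w)_h - (\wal)_h$ and integrates by parts at the mollified level; the only quadratic expression that appears is $\abs{(\wal)_h}^2$, which is harmless because $\wal$ is bounded with compact support, and no $\abs{w}^2$-type term ever occurs. Passage to the limit $h\to 0$ is then a term-by-term convergence (the paper cites formula (3.33) of~\cite{DieRuzWol10}) that directly yields the two summands on the right of \eqref{eq:intbyparts}, without any fibrewise decomposition. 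This specific algebraic rearrangement at the mollified level, with both factors mollified, is the idea your proposal is missing.
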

\begin{proof}
  Let $0<h<T$. For a function $f$ defined in space and time denote the
  Steklov average of~$f$ by
  \begin{align*}
    f_h(x,t):=\frac{1}{h}\int_t^{t+h} f(x,s)\, ds.
  \end{align*}
  Then we have $\partial_t f_h(x,t)=h^{-1} (f(x, t+h)-f(x,t))$. We
  calculate
  \begin{align*}
    (I)_h &:= \skp{\partial_t w}{((\wal)_h \eta)_{-h}} = -\int_Q
    w_h \cdot \partial_t \big( (\wal)_h \eta \big) dz
    \\
    &= \int_Q (\wal-w)_h \cdot \partial_t((\wal)_h \eta)\,dz - \int_Q
    (\wal)_h \cdot \partial_t((\wal)_h \eta )\dz
    \\
    &= \int_Q (\wal-w)_h \cdot \big(\partial_t(\wal)_h\big) \eta\,dz +
    \int_Q (\wal-w)_h \cdot (\wal)_h \partial_t \eta\,dz - \int_Q \tfrac 12
    \abs{(\wal)_h}^2 \partial_t \eta\dz
    \\
    &= \int_Q (\wal-w)_h \cdot \big(\partial_t(\wal)_h\big) \eta\,dz + \frac
    12 \int_Q \big(\abs{(\wal)_h)}^2 - 2 w_h \cdot(\wal)_h\big)\partial_t
    \eta\,dz
    \\
    &=: (II)_h + (III)_h.
  \end{align*}
  All of these expressions are well defined. It has been shown
  in~\cite{DieRuzWol10} formula (3.33) that 
  \begin{align*}
    (II)_h &\to \int_Q (\wal-w) \big(\partial_t \wal \big) \eta\,dz,
    \\
    (III)_h &\to \frac
    12 \int_Q \big(\abs{\wal}^2 - 2 w \cdot \wal \big)\partial_t
    \eta\,dz.
  \end{align*}
  for $h \to 0$. Let us point out that $\wal-w$ is only non-zero
  on~$\Oal$. On this set~$\wal$ is locally~$C^\infty$, so $\partial_t
  \wal$ is  a classical time derivative on this set. This shows that
  the limit~$(I)_h$ is also well defined and can be calculated
  by~\ref{eq:intbyparts}. 
\end{proof}

This was the last piece to get Theorem~\ref{thm:orlicz}. 


\begin{proof}[Proof of Theorem~\ref{thm:orlicz}]
The definition of Lipschitz truncation $\wal$ is given in \eqref{eq:def_ulambda} and property (a) follows by the definition.
Property (b)  is proven in Lemma \ref{lem:lip-nablaw}, property (c) is proven in Lemma \ref{lem:stabLphi}, property (d) is proven in Lemma \ref{lem:lip-wt}, property (e) is proven in Corollary \ref{cor:hoelder_ul}, property (f) follows by Lemma \ref{lem:integrationsbyparts}.
 
\end{proof}

\section{The $\phi$-caloric approximation}\label{phi}

In this Section we will concentrate to prove the $\phi$-caloric approximation result i.e. Theorem \ref{p-caloric} in the general case of $\varphi$-growth.

Let us start defining $\bfA,\bfV\,:\, \setR^{m \times n} \to \setR^{m \times n}$ in
the following way:
\begin{subequations}
  \label{eq:defAV}
  \begin{align}
    \label{eq:defA}
    \bfA(\bfQ)&=\phi'(|\bfQ|)\frac{\bfQ}{|\bfQ|},
    \\
    \label{eq:defV}
    \bfV(\bfQ)&=\psi'(|\bfQ|)\frac{\bfQ}{|\bfQ|}.
  \end{align}
\end{subequations}

Another important set of tools are the {\rm shifted N-functions}
$\set{\phi_a}_{a \ge 0}$. We define for $t\geq0$
\begin{align}
  \label{eq:phi_shifted}
  \phi_a(t):= \int _0^t \varphi_a'(s)\, ds\qquad\text{with }\quad
  \phi'_a(t):=\phi'(a+t)\frac {t}{a+t}.
\end{align}
Note that $\phi_a(t) \sim \phi'_a(t)\,t$. The families $\set{\phi_a}_{a \ge 0}$ and
$\set{(\phi_a)^*}_{a \ge 0}$ satisfy the $\Delta_2$-condition uniformly in $a \ge 0$. 
The connection between $\bfA$, $\bfV$ (see ~\cite{DieStrVer12})  is the following:
\begin{align*}
    \big({\bfA}(\bfP) - {\bfA}(\bfQ)\big) \cdot \big(\bfP-\bfQ \big)
    &\sim \bigabs{ \bfV(\bfP) - \bfV(\bfQ)}^2 \sim
    \phi_{\abs{\bfP}}(\abs{\bfP - \bfQ}),
    \\
      \intertext{uniformly in $\bfP, \bfQ \in \setR^{m \times n}$ .
      Moreover,} \bfA(\bfQ) \cdot \bfQ \sim \abs{\bfV(\bfQ)}^2 &\sim
    \phi(\abs{\bfQ}),
  \end{align*}
  uniformly in $\bfQ \in \setR^{m \times n}$.

Now we begin to prove some Lemmas regarding the level sets of the maximal function.   Let  $w\in L^\phi(J,W^{1,\phi}_0(\Omega))$ and  $G\in L^{\phi^*}(J\times\Omega)$  such that
\begin{align}
\label{label1}
\left\{ \quad \begin{array}{l}
      \partial_t w=\Div G \,,  \quad \text{on} \quad [-t_0,0)\times \Omega\\[0.1cm]
      w(-t_0,\cdot)\equiv 0 \,.
   \end{array} \right.
\end{align}
We define for $Q=[-t_0,0)\times\Omega$
  \begin{align}
\label{eq:gamma1}
    \phi(\gamma) &:= \dashint_Q \phi(\abs{\nabla w})\dz + \dashint_Q
    \phi^*(\abs{G})\dz.
  \end{align}
We then have the following lemma.
\begin{lemma}
\label{lem:goodlambda}
For every $m_0\in\setN$ there exists a  $\lambda\in [\gamma,2^{m_0}\gamma]$, such that for $\alpha=\alpha(\lambda):=\frac{\lambda}{\phi'(\lambda)}$
  \begin{align*}
    \abs{\set{\mathcal{M}^{\alpha}(\nabla w\chi_Q)>\lambda}}+\abs{\set{\mathcal{M}^\alpha(G\chi_Q)>\phi'(\lambda)}}
    \leq c\frac{\phi(\gamma)}{m_0\phi(\lambda)}\abs{Q}
  \end{align*}
with $c$ independent of $m_0$ and $\gamma$.
\end{lemma}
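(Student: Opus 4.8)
The strategy is a pigeonhole (Calderón–Zygmund style) argument over a dyadic range of truncation levels, combined with the weak-type $(1,1)$ bound for the parabolic maximal operator $\mathcal{M}^\alpha$. The subtlety is that here $\lambda$ and $\alpha$ are coupled via $\alpha(\lambda)=\lambda/\phi'(\lambda)$, so as $\lambda$ varies the underlying family of cylinders $\mathcal{Q}^\alpha$ changes; I must therefore produce a single scale-independent weak-type estimate that applies to every $\mathcal{M}^{\alpha(\lambda)}$ with a constant not depending on $\lambda$. Fortunately this is automatic: for any fixed $\alpha>0$ the operator $\mathcal{M}^\alpha$ is of weak type $(1,1)$ by a Vitali covering argument on the metric space $(\setR^{m+1},d_\alpha)$, and the covering constant depends only on the dimension $m$, not on $\alpha$ (the metric $d_\alpha$ is bi-Lipschitz equivalent to a fixed one up to an anisotropic rescaling of the time variable, which does not affect the doubling constant). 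So there is a dimensional constant $c_0$ with $\abs{\set{\mathcal{M}^\alpha(g\chi_Q)>\mu}}\le \frac{c_0}{\mu}\int_Q\abs{g}\dz$ for all $\alpha,\mu>0$.

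First I would fix $m_0\in\setN$ and consider the $m_0$ dyadic levels $\lambda_i:=2^i\gamma$ for $i=0,1,\dots,m_0-1$, with $\alpha_i:=\alpha(\lambda_i)=\lambda_i/\phi'(\lambda_i)$. For each $i$, the weak-type bound gives
\begin{align*}
  \abs{\set{\mathcal{M}^{\alpha_i}(\nabla w\chi_Q)>\lambda_i}}
  &\le \frac{c_0}{\lambda_i}\int_Q\abs{\nabla w}\dz,\\
  \abs{\set{\mathcal{M}^{\alpha_i}(G\chi_Q)>\phi'(\lambda_i)}}
  &\le \frac{c_0}{\phi'(\lambda_i)}\int_Q\abs{G}\dz.
\end{align*}
These raw bounds are too weak by themselves; the trick is to multiply through by $\phi(\lambda_i)$ and use the $N$-function relations from the excerpt. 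Using $\phi(\lambda_i)\sim \phi'(\lambda_i)\,\lambda_i$ (see \eqref{ineq:phiast_phi_p}) one gets $\frac{\phi(\lambda_i)}{\lambda_i}\sim\phi'(\lambda_i)$ and $\frac{\phi(\lambda_i)}{\phi'(\lambda_i)}\sim\lambda_i$, hence
\begin{align*}
  \phi(\lambda_i)\,\abs{\set{\mathcal{M}^{\alpha_i}(\nabla w\chi_Q)>\lambda_i}}
  &\le c\,\phi'(\lambda_i)\int_Q\abs{\nabla w}\dz,\\
  \phi(\lambda_i)\,\abs{\set{\mathcal{M}^{\alpha_i}(G\chi_Q)>\phi'(\lambda_i)}}
  &\le c\,\lambda_i\int_Q\abs{G}\dz.
\end{align*}
Now I bound the right-hand sides using Young's inequality \eqref{eq:young}: pointwise $\phi'(\lambda_i)\abs{\nabla w}\le \phi(\abs{\nabla w})+\phi^*(\phi'(\lambda_i))\sim\phi(\abs{\nabla w})+\phi(\lambda_i)$ and $\lambda_i\abs{G}\le \phi(\lambda_i)+\phi^*(\abs{G})$, using $\phi^*(\phi'(t))\sim\phi(t)$. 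Integrating over $Q$ and recalling the definition $\phi(\gamma)\abs{Q}=\int_Q\phi(\abs{\nabla w})\dz+\int_Q\phi^*(\abs{G})\dz$, each of the $m_0$ inequalities becomes
\begin{align*}
  \phi(\lambda_i)\Big(\abs{\set{\mathcal{M}^{\alpha_i}(\nabla w\chi_Q)>\lambda_i}}+\abs{\set{\mathcal{M}^{\alpha_i}(G\chi_Q)>\phi'(\lambda_i)}}\Big)
  \le c\,\phi(\gamma)\abs{Q}+c\,\phi(\lambda_i)\abs{Q}.
\end{align*}

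The dangling term $c\,\phi(\lambda_i)\abs{Q}$ on the right is a genuine nuisance, since $\phi(\lambda_i)$ can be enormous; I expect dealing with it to be the main obstacle. The resolution is that the $\set{\mathcal{M}^{\alpha_i}>\lambda_i}$-type superlevel sets are nested or at least cumulatively summable against $1/\phi(\lambda_i)$ — more precisely, dividing the displayed inequality by $\phi(\lambda_i)$ gives
\begin{align*}
  \abs{\set{\mathcal{M}^{\alpha_i}(\nabla w\chi_Q)>\lambda_i}}+\abs{\set{\mathcal{M}^{\alpha_i}(G\chi_Q)>\phi'(\lambda_i)}}
  \le c\,\frac{\phi(\gamma)}{\phi(\lambda_i)}\abs{Q}+c\,\abs{Q},
\end{align*}
which is useless. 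Instead one must keep the form with $\phi(\lambda_i)$ multiplying the left side and, crucially, observe that summing the $m_0$ inequalities and then using that the left-hand quantities — when weighted correctly — telescope or can be bounded by a single copy of $\phi(\gamma)\abs{Q}$. The clean way: sum over $i=0,\dots,m_0-1$ the inequalities in the \emph{original} weak-type form multiplied by $\phi(\lambda_i)/\phi'(\lambda_i)\sim\lambda_i$ and $\phi(\lambda_i)$ respectively, exploiting $\sum_i \phi'(\lambda_i)\lesssim \phi'(\lambda_{m_0})$ and $\sum_i\lambda_i\lesssim\lambda_{m_0}$ (geometric sums, using that $\phi'$ is increasing and the $\Delta_2$-type doubling from Assumption~\ref{ass:phi}), which bounds the total by $c\,\phi(\gamma)\abs{Q}$ after one more application of Young as above. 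Therefore
\begin{align*}
  \sum_{i=0}^{m_0-1}\phi(\lambda_i)\Big(\abs{\set{\mathcal{M}^{\alpha_i}(\nabla w\chi_Q)>\lambda_i}}+\abs{\set{\mathcal{M}^{\alpha_i}(G\chi_Q)>\phi'(\lambda_i)}}\Big)\le c\,\phi(\gamma)\abs{Q}.
\end{align*}
Finally, by pigeonhole there exists an index $i\in\set{0,\dots,m_0-1}$ for which the $i$-th summand is at most $\frac{1}{m_0}$ of the total, i.e.
\begin{align*}
  \phi(\lambda_i)\Big(\abs{\set{\mathcal{M}^{\alpha_i}(\nabla w\chi_Q)>\lambda_i}}+\abs{\set{\mathcal{M}^{\alpha_i}(G\chi_Q)>\phi'(\lambda_i)}}\Big)\le \frac{c\,\phi(\gamma)}{m_0}\abs{Q}.
\end{align*}
Dividing by $\phi(\lambda_i)$ and setting $\lambda:=\lambda_i\in[\gamma,2^{m_0}\gamma]$, $\alpha:=\alpha_i=\lambda/\phi'(\lambda)$ yields exactly the claimed estimate, with $c$ depending only on the characteristics of $\phi$ and on $m$, hence independent of $m_0$ and $\gamma$. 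The one point requiring care throughout is keeping track of the $\Delta_2$-constants of $\phi$ and $\phi^*$ (finite by the remark following Assumption~\ref{ass:phi}) so that all the $\sim$ relations have constants uniform in $i$.
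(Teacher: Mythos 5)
The argument you sketch does not close, and the obstruction is precisely the dangling term you flagged. Let me make this concrete. Your plan multiplies the weak--type $(1,1)$ estimate at level $\lambda_i$ by $\phi(\lambda_i)$ and arrives, after Young's inequality, at
\begin{align*}
  \phi(\lambda_i)\Big(\abs{\set{\mathcal{M}^{\alpha_i}(\nabla w\chi_Q)>\lambda_i}}+\abs{\set{\mathcal{M}^{\alpha_i}(G\chi_Q)>\phi'(\lambda_i)}}\Big)
  \leq c\,\phi(\gamma)\abs{Q}+c\,\phi(\lambda_i)\abs{Q}.
\end{align*}
You correctly note that dividing by $\phi(\lambda_i)$ is useless, and propose instead to sum in $i$ using the geometric growth of $\phi'(\lambda_i)$ and $\lambda_i$. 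But this does not help: summing the raw weak--type bounds gives
$\sum_i\phi(\lambda_i)(\ldots)\leq c\,\phi'(\lambda_{m_0})\int_Q\abs{\nabla w}\,\dz + c\,\lambda_{m_0}\int_Q\abs{G}\,\dz$,
and after one more Young this is bounded by $c\,\phi(\gamma)\abs{Q}+c\,\phi(\lambda_{m_0})\abs{Q}$, which is dominated by the \emph{second} term, not the first. There is no value of $\delta$ in Young's inequality that absorbs it, because the issue is one of scale: the weak--type $(1,1)$ inequality $\abs{\set{\mathcal{M}^\alpha g>\mu}}\leq c\mu^{-1}\norm{g}_1$ is simply mismatched with the target bound $\phi(\lambda)^{-1}\int_Q\phi(\abs{g})$. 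Comparing the two would require $\phi'(\lambda)\int_Q\abs{g}\lesssim\int_Q\phi(\abs{g})$, which is false for large $\lambda$ (take $\abs{g}\equiv 1$). No amount of pigeonholing over dyadic $\lambda$'s repairs this, because the superlevel sets $\set{\mathcal{M}^{\alpha_i}(\cdot)>\lambda_i}$ are \emph{not} nested — $\alpha_i$ changes with $i$, so the underlying cylinder family changes — and hence no Cavalieri--type summation against $\phi(\lambda_i)$ weights is available.

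The missing idea, which the paper uses, is to \emph{decouple} the maximal operator from $\alpha$ before summing. Introduce the unrestricted (strong) maximal operator $\Mst f(z)=\sup_{I\times B\ni z}\dashint_I\dashint_B\abs{f}$, which dominates $\Ma$ for every $\alpha$. Now the superlevel sets $\set{\Mst(\nabla w\chi_Q)>2^m\gamma}$ belong to a single fixed operator and are genuinely nested, so the layer--cake formula gives
$\sum_{m=0}^{m_0}\phi(2^m\gamma)\abs{\set{\Mst(\nabla w\chi_Q)>2^m\gamma}}\leq c\int\phi\big(\Mst(\nabla w\chi_Q)\big)$,
and similarly with $\phi((\phi')^{-1}(\cdot))\sim\phi^*(\cdot)$ for the $G$--term. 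One then invokes the \emph{strong} modular bound $\int\phi(\Mst g)\leq c\int\phi(\abs{g})$ (available under Assumption~\ref{ass:phi}, since both $\phi,\phi^*\in\Delta_2$), not the weak--type $(1,1)$ bound, to arrive at the clean total $c\,\phi(\gamma)\abs{Q}$ with no extraneous $\phi(\lambda_{m_0})$ term. The pigeonhole over $m\in\{0,\dots,m_0\}$ then finishes exactly as you describe. In short: the skeleton of your plan (dyadic levels, multiply by $\phi(\lambda_i)$, pigeonhole) is right, but the weak--type $(1,1)$ estimate is the wrong engine; you need $\Ma\leq\Mst$ to obtain nested level sets, Cavalieri to sum them, and the strong Orlicz modular bound of $\Mst$ to close the estimate.
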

\begin{proof}
We will use the following maximal operator
\[
 \Mst(f)(z):=\sup_\set{I\times B\subset\setR^{m+1}\,:\,z\in I\times B} \dashint_I\dashint_B f \dx\dt.
\]
Certainly we have that $\Ma(f)(x)\leq \Mst(f)(x)$, for almost all $x\in \setR^{m+1}$. Therefore,
  \begin{align*}
    \Oal := \set{\Ma(\nabla w)>\lambda} \cup \set{\alpha \Ma(G) >
      \lambda} \subset \set{\Mst(\nabla w)>\lambda}\cup \set{\Mst(G)
      >\frac{\lambda}{\alpha}}.
  \end{align*}
  Now we have by the continuity of $\Mst$ and since
  $(\phi')^{-1}\sim(\phi^*)'$, that for $m_0\in\setN$ and
  $\alpha(t):=\frac{t}{\phi'(t)}$,
  \begin{align*}
    &m_0\min_{m\in\set{0,..,m_0}}\phi(2^m\gamma) \Big( \big(\abs{
      \set{\mathcal{M}^{\alpha} (\nabla
        w\chi_Q)>2^m\gamma}}+\abs{\set{\mathcal{M}^{\alpha}(G\chi_Q)
        >\phi'(2^m\gamma)}}\big) \Big)
    \\
    &\quad\leq \sum_{m=0}^{m_0} \Big(\phi(2^m\gamma)(\abs{\set{\mathcal{M}^{
          \alpha}(\nabla
        w\chi_Q)>2^m\gamma}}+\abs{\set{\mathcal{M}^{\alpha}
        (G\chi_Q)>\phi'(2^m\gamma)}} \Big)
    \\
    &\quad\leq \sum_{m=0}^{m_0} \Big( \phi(2^m\gamma)\abs{\set{\Mst(\nabla
        w\chi_Q)>2^m\gamma}}+\abs{\set{\Mst(G\chi_Q)>\phi'(2^m\gamma)}} \Big)
    \\
    &\quad\leq \int \phi(\Mst(\nabla
    w\chi_Q))+\phi((\phi')^{-1}(\Mst(G\chi_Q)))\dz
    \\
    &\quad \leq c\int_{Q}\phi(\abs{\nabla w})+\phi^*(\abs{G})
\leq
  c\phi(\gamma)\abs{Q}.
  \end{align*}
This concludes the proof.
\end{proof}

Let $ u\in L^\phi(J,W^{1,\phi}_0(\Omega))$  be solution of 
\begin{align*}
  \partial_t u &= \divergence H
\end{align*}
on $Q=I\times B=(t^-,t^+)\times B$
with $H\in L^{\phi^*}(J\times\Omega)$ 
and  $h$ be the weak solution of
\begin{align*}
  \partial_t h -  \divergence(A(\nabla h)) &= 0\text{ in }Q
\end{align*}
with $h=u$ on $\partial_p Q$. The function $h$ is called the $\phi$-caloric comparison function of $u$ in $Q$. Define $w := u-h$. Then
\begin{align*}
  \partial_t w -  \divergence(A(\nabla u)-A(\nabla h)) &=
  \partial_t u - \divergence(A(\nabla u))
  \\
  &= \divergence(H-A(\nabla u)) = \divergence(G)
\end{align*}
and $w=0$ on $\partial_p Q$, where $G= H-A(\nabla u)$.

Since $w$ is a valid testfunction, we find by the standard methods, that 
\begin{align}
\label{eq:linftyl2}
\sup_{t\in I}\dashint_B\frac{\abs{w}^2}{t^+-t^-}\dx+\seb{\dashint_{Q}\abs{V(\nabla u)-V(\nabla h)}^2\dz} \leq c_0\dashint_{Q} \phi(\abs{\nabla u})+\phi^*(\abs{G})\dz,
\end{align}
where 
$c_0$ is a fixed constant only depending on the characteristics of $\phi$.

Now we are in a position to prove the $\phi$-caloric approximation Theorem.
\begin{theorem}
  \label{thm:almost}
  Let $\sigma\in(0,1)$, $q\in [1,\infty)$ and $\theta\in(0,1)$ fixed.
  Moreover, let $\tilde{Q} =Q$ or to be more flexible let~$\tilde{Q}$
  be such that $Q \subset \tilde{Q} \subset 2Q$.  Then for
  $\epsilon>0$ there exists $\delta>0$ such that the following holds:
  if $u$ is { \sl \lq \lq almost $\phi$-caloric "} in the sense that
  for all $\xi \in C^\infty_0(Q)$,
  \begin{align}
    \label{eq:1}
    \biggabs{\dashint_Q -u \partial_t \xi + A(\nabla u)\nabla \xi \dz}
    \leq \delta \bigg( \dashint_{\tilde{Q}} \phi(\abs{\nabla u})\dz +
    \dashint_{\tilde{Q}} \phi^*(\abs{H})\dz + \phi(\norm{\nabla \xi}_\infty)
    \bigg),
  \end{align}
  then
  \begin{align*}
    & \bigg( \dashint_{I}
    \bigg(\dashint_B\Big(\frac{\abs{u-h}^{2}}{t^+-t^-}\Big)^\sigma\dx\bigg)^\frac
    q\sigma\dt\bigg)^\frac1q+\biggabs{\dashint_Q \abs{V(\nabla u) -
        V(\nabla h)}^{2\theta} \dz }^{\frac{1}{\theta}}
    \\
    &\quad \leq \epsilon \bigg( \dashint_{\tilde{Q}} \phi(\abs{\nabla u})\dz +
    \dashint_{\tilde{Q}} \phi^*(\abs{H})\dz \bigg).
  \end{align*}
\end{theorem}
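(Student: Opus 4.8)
The plan is to obtain Theorem~\ref{thm:almost} as a direct consequence of the parabolic Lipschitz truncation of Theorem~\ref{thm:orlicz}, applied to the difference $w:=u-h$, with no compactness or contradiction argument. First I would fix the comparison function $h$ (the $\phi$-caloric function with $h=u$ on $\partial_p Q$), set $w:=u-h$, and record that $w$ vanishes on $\partial_p Q$, that $\partial_t w=\Div(H-A(\nabla h))$ with $G:=H-A(\nabla u)$, and that $w$ satisfies the energy estimate \eqref{eq:linftyl2}. Writing $\phi(\gamma):=\dashint_{\tilde Q}\phi(\abs{\nabla u})\dz+\dashint_{\tilde Q}\phi^*(\abs{H})\dz$ — the quantity on the right of both \eqref{eq:1} and the conclusion — it suffices to bound the left side of the conclusion by $\epsilon\,\phi(\gamma)$. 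Using $\bfA(\bfQ)\cdot\bfQ\sim\abs{\bfV(\bfQ)}^2\sim\phi(\abs{\bfQ})$, the relations between $\bfA$, $\bfV$ and the shifted N-functions, the $\Delta_2$-condition and $Q\subset\tilde Q\subset 2Q$, one first checks $\dashint_Q\phi(\abs{\nabla w})\dz+\dashint_Q\phi^*(\abs{H-A(\nabla h)})\dz+\dashint_Q\phi(\abs{\nabla h})\dz\le c\,\phi(\gamma)$. Then, for a large integer $m_0$ to be chosen at the very end (in terms of $\epsilon,\sigma,\theta,q$ and the characteristics of $\phi$), I would apply Lemma~\ref{lem:goodlambda} to $w$ and $H-A(\nabla h)$ to pick a level $\lambda$ with matching scaling $\alpha=\lambda/\phi'(\lambda)$ so that $\abs{\Oal}\le\tfrac{c}{m_0}\tfrac{\phi(\gamma)}{\phi(\lambda)}\abs{Q}\le\tfrac{c}{m_0}\abs{Q}$, $\phi(\lambda)\abs{\Oal}\le\tfrac{c}{m_0}\phi(\gamma)\abs{Q}$ and $\phi(\lambda)\le c\,\Delta_2(\phi)^{m_0}\phi(\gamma)$, and finally take $\wal$ to be the truncation of $w$ at level $\lambda$ from Theorem~\ref{thm:orlicz}; I will use $\wal=w$ off $\Oal$, $\norm{\nabla\wal}_\infty\le c\lambda$, $\alpha\,\Na(\partial_t\wal)\le c\lambda$, the vanishing of $\wal$ on $\partial_p Q$ together with its $d_\alpha$-Lipschitz continuity (which via the zero value at $t^-$ gives $\abs{\wal(t,x)}^2\le c\,\lambda^2\alpha^{-1}\abs{t-t^-}\le c\,\phi(\lambda)\abs{t^+-t^-}$), and the integration-by-parts identity (f) for cutoffs $\eta\in W^{1,\infty}_0(-\infty,t^+)$.

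The heart of the argument is to test the difference equation with $\wal$ times a time cutoff. Subtracting the weak equation for $h$ from \eqref{eq:1} gives $\abs{\int_Q-w\,\partial_t\xi+(A(\nabla u)-A(\nabla h))\cdot\nabla\xi\,\dz}\le\delta\,\abs{Q}(\phi(\gamma)+\phi(\norm{\nabla\xi}_\infty))$ for $\xi\in C^\infty_0(Q)$, and a standard approximation extends this to $\xi=\wal\eta$, where $\phi(\norm{\nabla\xi}_\infty)\le c\,\Delta_2(\phi)^{m_0}\phi(\gamma)$ and $\skp{\partial_t w}{\wal\eta}$ is computed by (f). Choosing $\eta\nearrow\chi_{(t^-,\tau)}$ and letting the regularisation vanish, the bottom boundary term drops out (both $w$ and $\wal$ vanish at $t^-$) and, using $(A(\nabla u)-A(\nabla h))\cdot(\nabla u-\nabla h)\sim\abs{V(\nabla u)-V(\nabla h)}^2$ on $(\Oal)^c$, one arrives for a.e.\ $\tau\in I$ at an identity balancing $\tfrac12\int_B\abs{w(\tau)}^2\dx$ and $\int_{(\Oal)^c\cap\set{t<\tau}}(A(\nabla u)-A(\nabla h))\cdot(\nabla u-\nabla h)\dz$ against $\tfrac12\int_B\abs{(w-\wal)(\tau)}^2\dx$ plus a remainder $S_\tau$. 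The remainder contains the commutator $\int_{\Oal}(\partial_t\wal)(\wal-w)$, which by the pointwise bound $\abs{\partial_t\wal}\le c\,\lambda/(\alpha r_j)$ on each Whitney cylinder $Q_j$ (cf.\ the proof of Lemma~\ref{lem:lip-wt}), the Whitney estimates of Lemma~\ref{lem:w-w_j_lambda} and $\phi(\lambda)\abs{\Oal}\le\tfrac{c}{m_0}\phi(\gamma)\abs{Q}$ is bounded by $\tfrac{c}{m_0}\phi(\gamma)\abs{Q}$; a flux term $\int_{\Oal}(A(\nabla u)-A(\nabla h))\cdot\nabla\wal$, which by Young's inequality with a small parameter (using $\phi^*(\abs{A(\nabla u)-A(\nabla h)})\lesssim\phi(\abs{\nabla u})+\phi(\abs{\nabla h})$, $\norm{\nabla\wal}_\infty\le c\lambda$, Lemma~\ref{lem:w-w_j_lambda_phi} and the smallness of $\abs{\Oal}$) is made $\le\epsilon'\,\phi(\gamma)\abs{Q}$; and the error $\abs{Q}E(\wal\eta)\le c\,\delta\,\Delta_2(\phi)^{m_0}\phi(\gamma)\abs{Q}$. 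Using $w=\wal$ on $(\Oal_\tau)^c$, the pointwise bound for $\wal(\tau)$ on $\Oal_\tau$, and \eqref{eq:linftyl2} to absorb the $L^2$-mass of $w$, one then extracts the two smallness estimates $\dashint_Q\tfrac{\abs{w}^2}{\abs{t^+-t^-}}\dz\le c(\tfrac1{m_0}+\epsilon'+\delta\,\Delta_2(\phi)^{m_0})\phi(\gamma)$ and $\dashint_Q\abs{V(\nabla u)-V(\nabla h)}^2\chi_{(\Oal)^c}\dz\le c(\tfrac1{m_0}+\epsilon'+\delta\,\Delta_2(\phi)^{m_0})\phi(\gamma)$.

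It remains to conclude via the subcritical exponents $\theta<1$, $\sigma<1$, $q<\infty$. For the gradient term, split $Q=\Oal\cup(\Oal)^c$: on $(\Oal)^c$, Jensen's inequality with exponent $\theta$ reduces $(\dashint_Q\abs{V(\nabla u)-V(\nabla h)}^{2\theta}\chi_{(\Oal)^c}\dz)^{1/\theta}$ to the second smallness estimate, while on $\Oal$, Hölder with exponent $1/\theta$ and \eqref{eq:linftyl2} give a bound $(\abs{\Oal}/\abs{Q})^{(1-\theta)/\theta}c\,\phi(\gamma)\le(c/m_0)^{(1-\theta)/\theta}\phi(\gamma)$. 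For the first term of the conclusion, since $2\sigma<2$ and $q<\infty$ one interpolates the $L^2_{t,x}$-smallness of $w/\sqrt{\abs{t^+-t^-}}$ just obtained with its $L^\infty_t(L^2_x)$-boundedness from \eqref{eq:linftyl2} and uses $L^2(B)\hookrightarrow L^{2\sigma}(B)$. Then one first fixes $\epsilon'$ and $m_0$ so that the $\epsilon'$- and $1/m_0$-contributions are $<\epsilon/2$, and afterwards $\delta=\delta(\epsilon',m_0)=\delta(\epsilon,\sigma,\theta,q,\phi)$ so small that the $\delta\,\Delta_2(\phi)^{m_0}$-contributions are $<\epsilon/2$. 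The expected main obstacle is the rigorous treatment of the time-derivative terms once the test function is the merely Lipschitz $\wal$: the identity (f) is exactly what makes $\skp{\partial_t w}{\wal\eta}$ well defined and computable for time cutoffs, the bound (d) is what controls the commutator on $\Oal$, and the preservation of the zero boundary values of $\wal$ is what allows $\wal\eta$ to be inserted into the weak formulation of the caloric comparison problem at all; the secondary difficulty is the bookkeeping separating the genuinely small contributions (powers of $m_0^{-1}$ from $\abs{\Oal}$, powers of $\delta$ and of the Young parameter) from the merely bounded ones, which must either cancel — as the bottom boundary term does — or be absorbed by means of the energy estimate \eqref{eq:linftyl2}.
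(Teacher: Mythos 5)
Your overall strategy is the one the paper uses: pick $w=u-h$, invoke Lemma~\ref{lem:goodlambda} to find a good level $\lambda\in[\gamma,2^{m_0}\gamma]$ with matching $\alpha=\lambda/\phi'(\lambda)$, take the Lipschitz truncation $\wal$ of $w$ from Theorem~\ref{thm:orlicz}, test the difference of the weak formulations against $\xi=\wal\eta$ for a time cutoff $\eta$, use the integration-by-parts identity~\ref{itm:integrationbyparts} together with $\norm{\nabla\wal}_\infty\le c\lambda$ and the Whitney estimates of Lemmas~\ref{lem:diff_uj_uk} and \ref{lem:w-w_j_lambda} to control all remainders, and then interpolate with the energy estimate~\eqref{eq:linftyl2} and choose $\tilde\delta,m_0,\delta$ in that order. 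That part of the plan is sound and faithful.

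The gap is in the intermediate smallness claim
\begin{align*}
\dashint_Q\frac{\abs{w}^2}{t^+-t^-}\dz \ \le\ c\Big(\tfrac1{m_0}+\epsilon'+\delta\,\Delta_2(\phi)^{m_0}\Big)\phi(\gamma),
\end{align*}
which you extract and then feed into the final interpolation. This $L^2_t(L^2_x)$-smallness of $w$ does \emph{not} follow from the energy identity, and in fact cannot be true in general: the energy identity only gives smallness of $\dashint_Q\abs{\wal}^2/(t^+-t^-)\dz$, i.e.\ of the \emph{truncation}, whereas on $\Oal$ the difference $w-\wal$ is controlled pointwise only in $L^\infty_t(L^2_x)$ by \eqref{eq:linftyl2}. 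Since $\Oal(t)=\Oal\cap(\{t\}\times B)$ may align with the $L^2_x$-mass of $w(t)$, the quantity $\dashint_Q\chi_{\Oal}\abs{w}^2/(t^+-t^-)\dz$ is only bounded by $c\,\phi(\gamma)$, not small. This is precisely why the theorem restricts to $\sigma<1$: were $L^2_{t,x}$-smallness of $w$ available, the conclusion for $\sigma=1$ would follow and the restriction would be superfluous. Concretely, with $\eta\nearrow\chi_{(t^-,\tau)}$ the identity produced is of the form
\begin{align*}
\tfrac12\int_B\abs{w(\tau)}^2\dx-\tfrac12\int_B\abs{(w-\wal)(\tau)}^2\dx+\int_{(\Oal)^c\cap\{t<\tau\}}\abs{V(\nabla u)-V(\nabla h)}^2\dz \ \le\ S_\tau,
\end{align*}
and the term $\tfrac12\int_B\abs{(w-\wal)(\tau)}^2\dx$ is supported on $\Oal(\tau)$ but is not small for the same reason; so you cannot simply move it to the right-hand side. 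The paper sidesteps this by working with the $L^2_t(L^1_x)$-quantity $(VI)=\dashint_I(\dashint_B\abs{w}/\sqrt{t^+-t^-}\dx)^2\dt$, so that on the bad set Cauchy--Schwarz in $x$ yields an extra factor $\abs{\Oal(t)\cap B}/\abs{B}$, whose \emph{time average} is $\abs{Q\cap\Oal}/\abs{Q}$ and hence small by Lemma~\ref{lem:goodlambda}. This $L^2(L^1)$-smallness, not $L^2(L^2)$-smallness, is what is then interpolated against $L^\infty(L^2)$ via the H\"older chain at the end of the paper's proof to obtain $L^{2q}_t(L^{2\sigma}_x)$ with $\sigma<1$, $q<\infty$.

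A secondary, cosmetic difference: the paper uses the linear ramp $\eta(t)=\max\{(t^+-t)/(t^+-t^-),0\}$, for which $-\partial_t\eta=1/(t^+-t^-)$ directly produces the time-averaged term $(I_1)=\dashint_Q\abs{\wal}^2/(2(t^+-t^-))\dz\ge 0$; the $\eta$-weight that this puts on $(II_1)$ is then removed via H\"older against $\eta^{-\theta/(1-\theta)}$, integrable for $\theta<\tfrac12$. Your proposal to send $\eta\to\chi_{(t^-,\tau)}$ leads to pointwise-in-$\tau$ identities that are technically heavier (limits of $\partial_t\eta$ as a measure, time traces of $w$ and $\wal$) and produce the problematic $\int_B\abs{(w-\wal)(\tau)}^2$ term noted above. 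You should therefore also adopt the paper's $L^2(L^1)$-decomposition of the bad set in the final step, rather than aiming for $L^2(L^2)$-smallness of $w$ itself.
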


\begin{proof}[Proof]
  Let $w:= u-h$ and $G=H-A(\nabla u)$.  Then 
  \begin{align*}
    \partial_t w &= \divergence G \qquad \text{on $Q$}
  \end{align*}
  and $w=0$ on $\partial_p Q$.
	We define
	 \begin{align}
\label{eq:gamma}
    \phi(\gamma) &:= \dashint_{\tilde{Q}} \phi(\abs{\nabla u})\dz + \dashint_{\tilde{Q}}
    \phi^*(\abs{H})\dz.
  \end{align}
By Lemma~\ref{lem:goodlambda} and \eqref{eq:linftyl2} we find for every $m_0\in\setN$ a $\lambda\in [\gamma,2^{m_0}\gamma]$, such that for $\alpha=\alpha(\lambda):=\frac{\lambda}{\phi'(\lambda)}$
\begin{align}
  \label{eq:goodbeta}
  \abs{\set{\mathcal{M}^{\alpha}(\nabla w\chi_Q)>\lambda}}+\abs{\set{\alpha
      \mathcal{M}^\alpha(G\chi_Q)>\lambda}} \leq
  \frac{c\phi(\gamma)}{\phi(\lambda)m_0}\abs{Q}.
\end{align}
with $c$ independent of $m_0,\gamma$ and $\lambda$.

\noindent  Now, let $\wal$ be the Lipschitz truncation of~$w$ as
  in Section~\ref{sec:truncation}, i.e.
  \begin{align*}
    \Oal := (\set{\Ma(\nabla w\chi_Q)>\lambda} \cup \set{\alpha \Ma(G\chi_Q) >
      \lambda})\text{ and } \supp(\wal)\subset \overline{\Oal\cap Q}.
  \end{align*}
%
%
%
 We use the test function $\xi = \wal \eta$, where $\eta= \max\set{\frac{t^+-t}{t^+-t^-},0}\in[0,1]$ on 
 $I=[t^-,t^+]$.  Note that in general~$\xi \notin
 C^\infty_0(Q)$. However, it follows by a simple convolution argument
 as in (4.1) of~\cite{DieStrVer12}, that the validity of~\eqref{eq:1}
 for all~$\xi \in C^\infty_0(Q)$ implies its validity under the
 assumption~$\norm{\nabla \xi}_\infty< \infty$. Thus,~$\xi$ is a valid
 test function.

Therefore, using the
Theorem~\ref{thm:orlicz}~\ref{itm:integrationbyparts} we find
\begin{align*}
  (I_1)+ (I) + (II) &:= \dashint_Q \frac{\abs{\wal}^2}{2} (-\partial_t
  \eta) \dz - \dashint_Q (w-\wal)\partial_t((\wal) \eta)\,dz
  \\
  &\qquad + \dashint_Q \langle (A(\nabla u)-A(\nabla h)), (\nabla
  \wal)\, \eta\rangle \dz
  \\
  &\leq \delta \bigg( \dashint_{\tilde{Q}} \phi(\abs{\nabla u})\dz \ds+
  \dashint_{\tilde{Q}}\phi^*(\abs{H})\dz + \phi(\norm{\eta \nabla \wal}_\infty)
  \bigg)
  \\
  &\leq \delta \bigg( \dashint_{\tilde{Q}} \phi(\abs{\nabla u})\dz + \dashint_{\tilde{Q}}
  \phi^*(\abs{H})\dz + c_{m_0}\,\phi(\gamma) \bigg)=:(III).
  \end{align*}
	using $\norm{\nabla \wal}_\infty \leq c \lambda \leq c 2^{m_0}\gamma$. As $-\partial_t \eta =\frac1{(t^+-t^-)} \geq 0$, we have that $(I_1)>0$. 
	We estimate the other terms. 
		\begin{align*}
(I)
&= - \dashint_Q
     (w-\wal)\, \partial_t\wal \eta \dz- \dashint_Q
     (w-\wal)\, \wal \partial_t\eta \dz \\
         &= -  \dashint_Q
     (w-\wal)\, \partial_t\wal \eta \dz - \frac{1}{\abs{Q}}\sum_{i}\int_{Q_i} (w-w_i^{\alpha})\rho_i\, \wal\partial_t\eta \dz \\
    &= -  \dashint_Q
     (w-\wal)\, \partial_t\wal \eta \dz - \frac{1}{\abs{Q}}\sum_{i}\int_{Q_i} (w-w_i^{\alpha})\rho_i\, \sum_{{j\in A_i}}\rho_jw_j^{\alpha}\partial_t\eta \dz \\
&\seb{= -  \dashint_Q
     (w-\wal)\, \partial_t\wal \eta \dz- \frac{1}{\abs{Q}}\sum_{\set{i:\exists j\in A_i : w^\alpha_j\neq 0}}\int_{Q_i} (w-w_i^{\alpha})\rho_i\, \sum_{{j\in A_i}}\rho_jw_j^{\alpha}\partial_t\eta \dz }\\
&=-(I_2)-(I_3).
\end{align*}
\seb{Using the fact, that $\supp(\rho_j)\subset\frac 34Q_j^{\alpha}$}, we estimate
  \begin{align*}
    (I_2) &\leq \dashint_Q \chi_{\Oal} \abs{ w-\wal}
   \seb{\abs{\partial_t \wal \eta}}\dz
    \\
    &= \frac1{\abs{Q}}\int\chi_{Q\cap\Oal} \sum_i 
\bigabs{
     \rho_i(w-w_i^{\alpha})}\bigabs{\sum_{{j\in A_i}} \partial_t\rho_i w_i^{\alpha}} \dz
    \\
    &= \frac1{\abs{Q}} \sum_i \int_{\frac34 Q_i}\chi_{Q\cap\Oal} \bigabs{
      \rho_i(w-w_i^{\alpha})}\bigabs{ \sum_{{j\in A_i}} \partial_t\rho_i
       (w_i^{\alpha}-w_j^{\alpha})} \dz
			\end{align*}
We estimate further using \ref{itm:P3}, \ref{itm:P1}, \ref{itm:whit_fat}, \ref{itm:whit_sum}, \ref{itm:whit2} Lemma~\ref{lem:diff_uj_uk}, Lemma~\ref{lem:w-w_j_lambda} the fact that $\Oal$ is symmetric around $t^+$ and \eqref{eq:goodbeta}. 
   \begin{align*}
   (I_2) 
   &\leq \frac c{\alpha\abs{Q}}\sum_i\sum_{{j\in A_i}}\int_{\frac34 Q_i}\frac{\abs{w-w_i^{\alpha}}}{r_i}
\frac{\abs{w_j^{\alpha}-w_i^{\alpha}}}{r_j}
\\
&\leq \frac c{\alpha\abs{Q}}\sum_i\sum_{{j\in A_i}}\abs{Q_i}\dashint_{\frac34Q_i}\frac{\abs{w-w_i^{\alpha}}}{r_i}\lambda\\
&\leq \frac{c\lambda^2}{\alpha}\frac{\abs{\Oal}}{\abs{Q}}\leq \frac{c\phi(\gamma)}{m_0}.
\end{align*}
To estimate $(I_3)$ 
we making use of the fact that either $w_i^\alpha=0$ or $\supp(\rho_i)\subset\frac 34Q_i^{\alpha}\subset Q$ and then $\int_{Q_i}(w-w_i^{\alpha})\rho_i \partial_t\eta\dz=0$. Since $\sum_i\rho_i= 1$ we find
\[
   (I_3) =\frac{1}{\abs{Q}}\sum_{\set{i:\exists j\in A_i : w^\alpha_j\neq 0}}\int_{Q_i} (w-w_i^{\alpha})\rho_i\, \sum_{{j\in A_i}}\rho_j(w_j^{\alpha}-w_i^{\alpha})\partial_t\eta \dz.
\]
Next, observe that there exists $j\in A_i$, such that $w^\alpha_j\neq 0$ and hence $\frac34 Q_j\subset J\times B$. This however implies that $r^2_j\alpha \leq 2(t^+-t^-)$ and consequently by \ref{itm:whit_radii} that $r^2_i\alpha \leq c(t^+-t^-)$. Using this bound together with the argument that was used to estimate $(I_2)$ implies 
\begin{align*}
   \abs{(I_3)}  &\leq \seb{\frac c{t^+-t^-}\frac{1}{\abs{Q}}\sum_{\set{i:\exists j\in A_i : w^\alpha_j\neq 0}}\int_{Q_i} \abs{w-w_i^{\alpha}}\dz\sum_{{j\in A_i}} \abs{w_i^{\alpha}-w_j^{\alpha}}}\\
   &\leq \seb{\frac{c}{\alpha \abs{Q}}\sum_{i}\int_{Q_i} \frac{\abs{w-w_i^{\alpha}}}{r_i}\dz\sum_{{j\in A_i}} \frac{\abs{w_i^{\alpha}-w_j^{\alpha}}}{r_i}}\\
 &\leq \seb{\frac{c\lambda^2}{\alpha}\frac{\abs{\Oal}}{\abs{Q}}\leq  \frac{c\phi(\gamma)}{m_0}.}
\end{align*}
Now we continue by estimating $(II)$.
  Recall that $\abs{\nabla \wal} \leq c\, \lambda$ and that
  $\wal=w=u-h$ on $Q \setminus \Oal$. This gives
  \begin{align*}
    (II) &= \dashint_Q \langle A(\nabla u)-A(\nabla h), \nabla \wal\, \eta\rangle
    \dz
    \\
    &\geq c\, \dashint_Q \chi_{Q \setminus \Oal} \abs{V(\nabla
      u)-V(\nabla h)}^2 \eta \dz - c\dashint_Q \chi_{\Oal}
    (\abs{A(\nabla u)}+\abs{A(\nabla h)}) \lambda \dz
    \\
    &=: (II_1) - (II_2).
  \end{align*}

 \noindent  Using Young's inequality with~$\tilde{\delta}$, that can be chosen independent of $m_0,\gamma,\lambda$ and \eqref{eq:goodbeta}, we find that
  \begin{align*}
    (II_2) &= \dashint_Q \chi_{\Oal} (\abs{A(\nabla u)}+\abs{A(\nabla
      h)}) \lambda \dz
    \\
&\leq  c_{\tilde{\delta}}\phi(\lambda)\frac{\abs{Q\cap\Oal}}{\abs{Q}}+{\tilde{\delta}}\dashint_Q \chi_{\Oal}\phi(\abs{\nabla u})\dz\leq c\Big(\frac{c_{\tilde{\delta}}}{m}+\tilde{\delta}\Big)\phi(\gamma).
%
  \end{align*}

\noindent  So far we have
  \begin{align*}
    (III) &= (I) + (II) \geq (I_1)-(I_2)- (I_3) + (II_1)-(II_2)
  \end{align*}
  which implies by that
  \begin{align}
\label{eq:II1}
\begin{aligned}
    (II_1) +(I_1)&\leq (II_2) + \abs{(I_2)+(I_3)} + (III)
    \\
    &\leq \Big(c_{m_0}\delta +3\tilde{\delta}+\frac{c_{\tilde{\delta}}}{m_0}\Big)\phi(\gamma)
\end{aligned}
\end{align}
%
  Observe, that for $\beta\in (0,1)$ we find
  \[
\seb{\bigg(\dashint_{t^-}^{t^+}\eta^{-\beta}\dt\bigg)^\frac1\beta
=\bigg(-\dashint_{t^--t^+}^{0}\frac{(t^+-t^-)^\beta}{s^\beta}\ds\bigg)^\frac1\beta
=(t^+-t^-)^\frac{\beta-1}\beta\bigg(\int_0^{t^+-t^-}{s^{-\beta}}\ds\bigg)^\frac1\beta=(1-\beta)^\frac{-1}{\beta}.
}
  \]
  Now we fix $\theta\in(0,\frac12)$, such that $\beta=\frac{1}{1-\theta}\in (0,1)$. For $\theta$ closer to $1$, we will later use an interpolation with \eqref{eq:linftyl2}. For this fixed $\theta\in(0,\frac12)$ we get by the above that
  \begin{align*}
    (IV) &:= \bigg(\dashint_Q \abs{V(\nabla u)-V(\nabla h)}^{2\theta} \dz
    \bigg)^{\frac 1 \theta}\\
&=\bigg(\dashint_Q \chi_{\Oal}\abs{V(\nabla u)-V(\nabla h)}^{2\theta} \dz
    +\dashint_Q \chi_{(\Oal)^c}\abs{V(\nabla u)-V(\nabla h)}^{2\theta} \dz
    \bigg)^{\frac 1 \theta}.\\
&\leq  c\, \dashint_Q \abs{V(\nabla u)-V(\nabla h)}^2\dz
    \bigg(\frac{\abs{Q\cap\Oal}}{\abs{Q}} \bigg)^\frac{1-\theta}\theta\\
&\quad+c\dashint_Q \chi_{(\Oal)^c}\abs{V(\nabla u)-V(\nabla h)}^{2}\eta \dz
\seb{\bigg(\dashint_Q\chi_{(\Oal)^c}\eta^\frac{-\theta}{1-\theta}\dz \bigg)^\frac{1-\theta}{\theta}}
\\
&\leq  c\, \dashint_Q \abs{V(\nabla u)-V(\nabla h)}^2\dz
    \bigg(\frac{\abs{Q\cap\Oal}}{\abs{Q}} \bigg)^\frac{1-\theta}{\theta}+\seb{c\Big(\frac{1-\theta}{1-2\theta}\Big)^\frac{1-\theta}{\theta}}(II_1)
    \\
    &=: c(V)+c(II_1).
  \end{align*}
  Now, by Lemma~\ref{lem:goodlambda} we get
  \begin{align}
\label{eq:V1}
\begin{aligned}
    (V) &\leq  c\, \dashint_Q \phi(\abs{\nabla u}) +
    \phi(\abs{\nabla h}) \dz \bigg(\frac{\abs{Q\cap\Oal}}{\abs{Q}}
    \bigg)^\frac{1-\theta}\theta.
    \\
    &\leq  \Big(\frac{c\phi(\gamma)}{\phi(2^{m_0}\gamma)m_0}\Big)^\frac{1-\theta}\theta\, \dashint_Q \phi(\abs{\nabla u}) + \phi^*(\abs{H})\\
  &\leq \frac{c\phi(\gamma)}{2^\frac{m_0(1-\theta)}{\theta}}
\end{aligned}  
\end{align}
For the estimate from below for $(I_1)$ we estimate similarly that 
\begin{align*}
  (VI)&:=\dashint_{I}\Big(\dashint_B\frac{\abs{w}}{\sqrt{t^+-t^-}}
  \dx\bigg)^2\dt 
  \\
  &\leq \dashint_Q\chi_{({\Oal})^c}
  \frac{\abs{\wal}^{2}}{t^+-t^-}\dz
+ \dashint_{I}\frac{\abs{(\set{t}\times{B})\cap
      \Oal}}{\abs{B}}
  \dashint_{\set{t}\times B}\frac{\abs{w}^{2}}{t^+-t^-}\dx\dt
  \\
  &\leq c(I_1)+\frac{\abs{Q \cap
      \Oal}}{\abs{Q}}
  \sup_{I}\dashint_B\frac{\abs{w}^{2}}{t^+-t^-}\dx.
\end{align*}
Now we use \eqref{eq:linftyl2}, \eqref{eq:gamma} and Lemma~\ref{lem:goodlambda} to find that
\begin{align*}
  (VI)
  &\leq c(I_1)+c\frac{\phi(\gamma)}{m_0\phi(2^{m_0}\gamma)}\phi(\gamma)\leq \frac{\phi(\gamma)}{2^{m_0}}
\end{align*}
This implies together with Lemma~\ref{lem:goodlambda}, \eqref{eq:II1}
and \eqref{eq:V1}

\[
 (VI)+(IV)\leq \Big(c_{m_0}\delta +3\tilde{\delta}+\frac{c_{\tilde{\delta}}}{m_0}+\frac{c}{2^\frac{m_0(1-\theta)}{\theta}}+\frac{c}{2^{m_0}}\Big)\phi(\gamma).
\]
Let us fix the auxiliary constant $\tilde{\epsilon}\in(0,1)$. It shall be fixed at the very end of the proof. 
In the following order we choose $\tilde{\delta},m_0$ and $\delta$.
We choose $\tilde{\delta}=\frac{\tilde{\epsilon}}{5}$. Then we choose $m_0$ large enough, such that $\frac{c_{\tilde{\delta}}}{m_0}+\frac{c}{2^\frac{m_0(1-\theta)}{\theta}}+\frac{c}{2^{m_0}}\leq \frac{\tilde{\epsilon}}{5}$. Finally we fix $\delta$ small enough such that $c_{m_0}\delta\leq \frac{\tilde{\epsilon}}{5}$. These choices imply the following estimate for a fixed $\theta\in (0,\frac12)$ (for example $\theta=\frac14$)
\[
 \dashint_{I}\Big(\dashint_B\frac{\abs{w}}{\sqrt{t^+-t^-}}
  \dx\bigg)^2\dt + \bigg(\dashint_Q \abs{V(\nabla u)-V(\nabla h)}^{2\theta} \dz
    \bigg)^{\frac 1 \theta} \leq \epsilon \phi(\gamma).
\]
Finally, by interpolation between \seb{the estimate above and estimate~\eqref{eq:linftyl2}, we find the result. For the sake of completion we include the interpolation between $L^2(L^1)$ and the $L^\infty(L^2)$ estimate. The interpolation between $L^2$ and $L^{2\theta}$ for the gradient terms is similar but more straight forward, such that we omit the details.} Let us fix $f=\frac{\abs{w}}{\sqrt{t^+-t^-}}$. Then we find for $b\in (2,\infty)$ and $a\in (1,2)$ by H\"older, Jensen's inequality, \eqref{eq:linftyl2} and \eqref{eq:gamma} that
\begin{align*}
&\bigg(\dashint_{I}\bigg(\dashint_{B}\Big(\frac{\abs{w}}{\sqrt{t^+-t^-}}\Big)^a\dx\bigg)^\frac{b}{a}\dt\bigg)^\frac{2}{b}
=
\bigg(\dashint_{I}\bigg(\dashint_{B}\abs{f}^{2-a}\abs{f}^{2(a-1)}\dx\bigg)^\frac{q}{a}\dt\bigg)^\frac{2}{b}
\\
&\leq
\bigg(\dashint_{I}\bigg(\dashint_{B}\abs{f}\dx\bigg)^\frac{q(2-a)}{a}\bigg(\dashint_{B}\abs{f}^2\dx\bigg)^\frac{q(a-1)}a\dt\bigg)^\frac{2}{b}
\\
&\leq\bigg(\dashint_{I}\bigg(\dashint_{B}\abs{f}^2\dx\bigg)^{\frac{q(a-1)}a+\frac{q(2-a)}{2a}-1}\bigg(\dashint_{B}\abs{f}\dx\bigg)^2\dt\bigg)^\frac{2}{b}
\\
\quad&\leq \sup_{I}\bigg(\dashint_{B}\abs{f}^2\dx\bigg)^{\frac{b-2}2\frac{2}{b}}\bigg(\dashint_{I}\bigg(\dashint_{B}\abs{f}\dx\bigg)^2\dt\bigg)^\frac{2}{b}\leq c_0^\frac{q-2}{q}\tilde{\epsilon}^\frac{2}{q}\phi(\gamma).
\end{align*}
Choosing $a=2\sigma$ and $b=2q$ the proof is completed by an appropriate choice of $\tilde{\epsilon}$.
\end{proof}

\bibliographystyle{amsalpha}

\end{document}